\numberwithin{equation}{section}
\newtheorem{theorem}{Theorem}[section]
\newtheorem{assumption}{Assumption}[section]
\newtheorem{lemma}{Lemma}[section]
\newtheorem{corollary}{Corollary}[section]
\newcommand{\R}{\mathbb{R}}
\newcommand{\E}{\mathbb{E}}
\newcommand{\W}{\mathcal{W}}
\newcommand{\osc}{f_{\text{osc}}}
\newcommand\numberthis{\addtocounter{equation}{1}\tag{\theequation}}
\providecommand{\d}{\textup{d}}
\renewcommand{\d}{d}
\newcommand{\pd}[2]{\frac{\partial #1}{\partial #2}}
\newcommand{\tr}{\text{tr}}
\newcommand{\var}{\mathbb{V}\text{ar}\,}
\newcommand{\argmin}{\text{arg}\,\text{min}}
\newcommand\abs[1]{\left\lvert#1\right\rvert}
\DeclareMathOperator{\diagon}{Diag}
\title{Well-posedness and approximation of reflected McKean-Vlasov SDEs with applications }
\author{P. D. Hinds\thanks{School of Mathematical Sciences, University of Nottingham,  UK; pmxph7@nottingham.ac.uk}, A. Sharma\thanks{Department of Mathematical Sciences, Chalmers University of Technology and the University of Gothenburg, Sweden; akashs@chalmers.se} \, and M. V. Tretyakov\thanks{School of Mathematical Sciences, University of Nottingham,  UK; Michael.Tretyakov@nottingham.ac.uk} }
\begin{document}

\maketitle

\begin{abstract}
In this paper, we establish well-posedness of reflected McKean-Vlasov SDEs and their particle approximations in smooth non-convex domains. We prove convergence of the interacting particle system to the corresponding mean-field limit with the optimal rate of convergence. We motivate this study with applications to sampling and optimization in constrained domains by considering reflected mean-field Langevin SDEs and two reflected consensus-based optimization (CBO) models, respectively. We utilize reflection coupling to study long-time behaviour of reflected mean-field SDEs and also investigate convergence of the reflected CBO models to the global minimum of a constrained optimization problem.  We numerically test reflected CBO models on benchmark constrained optimization problems and an inverse problem.
 
 \medskip
       
 \noindent {\bf Keywords:}  constrained optimization, constrained sampling, mean-field Langevin dynamics, consensus-based optimization, interacting particle systems, reflected mean-field diffusion, reflected stochastic differential equations, propagation of chaos.

 \noindent {\bf AMS Classification:}  60H10 90C26 65C30 65C35 60J76.
     
\end{abstract}

\section{Introduction}

Reflected stochastic differential equations (SDEs) are used to model processes confined to a domain with boundary, where the solution is reflected along a certain direction when it hits the boundary. At the same time, McKean-Vlasov SDEs have coefficients with non-linear dependencies on the law of the solution. This paper is devoted to the study of reflected McKean-Vlasov SDEs which can model systems with constraints and mean-field interactions. We illustrate practical relevance of reflected mean-field SDEs via problems in sampling and optimization with constraints.  

Sampling and optimization problems are ubiquitous in the applied sciences and have found resurgence due to advancements in machine learning. The relation between optimization and sampling is quite established. While convex optimization (as well as log-concave sampling) is well-studied \cite{nocedal_numerical_2006}, the task of non-convex global optimization (as well as non-log-concave sampling) poses additional challenges \cite{horst_introduction_2000}. In the Bayesian setting, many of the optimization problems can be posed as sampling problems. On the other hand, many sampling techniques can be viewed as optimization in infinite dimensional setting through the variational perspective.  With regard to global optimization, \textit{metaheuristic methods} are a popular class of methods which have been used to numerically solve global optimization problems. Such methods consist of a high-level algorithmic framework to coordinate low-level heuristics in order to efficiently explore and exploit the solution space. Examples include particle swarm optimization \cite{kennedy_particle_1995} and differential evolution \cite{storn_differential_1997} among others. While there is typically limited theoretical foundation for such models, these methods have been found to be effective in practice \cite{vos_meta-heuristics_2000}. We are mainly interested in optimization and sampling techniques based on interacting particle systems driven by reflected SDEs. 

Interacting particle system based methods for optimization and sampling have gained traction (see \cite{leimkuhler2018ensemble, carrillo_analytical_2018, Liu_svgd_2016, stuart_eks_2020, garbuno2020affine, reich2015probabilistic, tretyakov_consensus-based_2023}) due to their enhanced capability to explore a non-convex energy landscape. These systems are driven by SDEs and hence the tools from stochastic calculus, especially mean-field theory \cite{sznitman1991topics} and stochastic numerics \cite{milstein_stochastic_2004} are available to establish their convergence. 

Let us discuss the contributions of this paper along with the comparison with existing literature. We will further discuss the literature at appropriate places later in the paper. Here, we give a high-level overview. In the paper, we first establish existence and uniqueness of reflected McKean-Vlasov SDEs and their particle approximations in a non-convex domain setting (Section~\ref{sec:wp}). The well-posedness was shown in a convex domain setting in \cite{adams_large_2022} and in a non-convex setting, but allowing only first order interaction, in \cite{sznitman_nonlinear_1984}. In \cite{wang2023distribution}  well-posedness was also considered in a non-convex domain setting but for McKean-Vlasov SDEs with singular drift and extra conditions on the diffusion coefficient. 
Our next result is convergence of interacting particle system towards its mean-field limit with the optimal rate of convergence (Section~\ref{sec:particles2mf}). The assumptions that we impose for well-posedness and particle convergence are general enough that many models which have been studied in $\R^d$ (see e.g. \cite{carrillo_analytical_2018, stuart_eks_2020, vaes_hoffmann_stuart2022_cbs, ding2024swarm}) can be reformulated in the framework of reflected McKean-Vlasov SDEs to handle constraints.  We provide three illustrations for application of reflected McKean-Vlasov SDEs to sampling and optimization. The first one is reflected mean-field Langevin dynamics (Section~\ref{subsec_mean_field_Lang}) for which we employ reflection coupling to study its long-time behavior and obtain non-asymptotic bounds (Section~\ref{subsec_reflection_coupling_conv}). In the convex domain setting, this study was conducted in \cite{wang2023exponential}. The second class of models we consider is consensus-based optimization (CBO) models (Section~\ref{sec:rcbo}) for which we establish convergence to the global minimum (Section~\ref{sec:globminim}). We add a repelling force in the CBO model to enhance exploration (Section~\ref{subsec_2.2}) and observe its benefits in numerical testing on the Rosenbrock function (Section~\ref{subsec:repelling}).  We test two discretization schemes for reflected SDEs, namely the penalty and projection schemes, to implement the CBO models for a few benchmark constraint optimization problems including an inverse problem (Section~\ref{sec:exp}).

A natural question that arises is why one would formulate optimization and sampling problems with constraints in the setting of reflected SDEs. We first mention other strategies which have been proposed and employed to handle constraints in the literature on interacting particle system driven by SDEs for the purpose of optimization and sampling. In \cite{borghi_constrained_2023}, a CBO model is considered where a penalty function is added to the objective function $f$, which penalizes the objective when points are outside the feasible set $\bar G$ ($f$ is defined on $\R^d$). For a penalization multiplier $\epsilon > 0$, the following unconstrained problem is considered:
\begin{equation}
    \min\limits_{x \in \R^d} P(x; \epsilon),
\end{equation}
where $P(x; \epsilon) = f(x) + \epsilon r(x)$ with $r(x) >0 $ if the feasible set $\bar G$ is violated else it is zero. An Euler scheme is applied to simulate the corresponding particle system. Convergence of the algorithm to the optimal solution is shown, both when the exact penalty parameter $\epsilon$ is known, and also when it is iteratively updated. In \cite{carrillo_consensus-based_2023}, like \cite{borghi_constrained_2023}, the authors also use a penalized objective function presenting several numerical experiments.
In \cite{bae_constrained_2022}, a projection technique is considered for discrete time CBO model driven by common noise.  It is observed in numerical tests on benchmark functions that common noise may result in less exploration producing a sub-standard performance (see \cite{tretyakov_consensus-based_2023}). In the case of ensemble Kalman inversion, which is again an interacting particle based optimization method for inverse problems (see e.g. \cite{iglesias2013ensemble}), in \cite{neil2019box} box constraints are handled using projection scheme and in \cite{weissmann_2023_EKI} a log-barrier penalty is added to deal with inequality constraints defining the convex feasible region. 

We now illustrate with a simple example to highlight why reflected SDEs (i.e., Skorokhod's dynamics) are the natural candidate to handle constraints in models  arising not only in biological and physical sciences but also in models underlying sampling and optimization techniques. 
Suppose we are interested in minimizing a continuously differentiable Lipschitz function $f : \bar G \rightarrow \R$ where $G \subset \R^d$ is a convex domain with boundary $\partial G$, and $x_{\min}=\argmin_{x \in \bar G} f(x)$ is the unique global minimum. In the continuous-time setting, we can employ the following gradient dynamics posed as Skorokhod's problem:
\begin{align*}
d X(t) = -\nabla f (X(t)) dt + d K(t), \quad X(0) \in G,
\end{align*}
where $K(t)$ is a finite variation non-increasing process which increases only when $X(t) \in \partial G$. We call the pair $(X(t), K(t))$ the solution of the Skorokhod problem, whose existence and uniqueness in the convex domain setting is proved by Tanaka \cite{tanaka_stochastic_1979}. The process $K(t)$ can be written as
\begin{align*}
   K(t) = \int_{0}^{t} I_{ \partial G}(X(s)) \nu(X(s)) d |K|(s),
\end{align*}
where $|K|(t)$ is the total variation  of $K(t)$ and $\nu(x)$ belongs to the set of inward normals at $x \in \partial G$. Using the chain rule, we get
\begin{align*}
    d |X(t) - x_{\min}|^2 = - 2\big((X(t) - x_{\min} )\cdot \nabla f(X(t))\big) dt +  2 I_{ \partial G}(X(t))\big((X(t) - x_{\min}) \cdot \nu(X(t))\big) d|K|(t). 
\end{align*}
Note that  $\nu$ is the inward normal of the convex domain $G$, and if we take $f$ to be strongly convex, then we have for some $\kappa >0$
\begin{align*}
\big((X(t) - x_{\min} )\cdot \nabla f(X(t))\big) \geq \kappa |X(t) - x_{\min}|^2 \text{ and }
I_{ \partial G}(X(t))\big((X(t) - x_{\min}) \cdot \nu(X(t))\big) \leq 0. 
\end{align*}
This results in $d |X(t) - x_{\min}|^2 \leq  - 2\kappa |X(t) - x_{\min}|^2 dt$ and hence $|X(t) - x_{\min}|^2 \leq |X(0) - x_{\min}|^2 e^{-2 \kappa t}.$  
It means as $t \rightarrow \infty$, $X(t) \rightarrow x_{\min} $. 

In a similar manner, if $f$ and $G$ are such that
\begin{align}
    (\nabla f (x) \cdot \nu(x)) \leq 0, \;\; x \in \partial G, 
\end{align}
then via chain rule we have $
    d (f (X(t)) - f(x_{\min})) \leq - | \nabla f(X(t))|^2 dt. 
$
If, in addition, we assume that Polyak's inequality holds for $f$ with constant $\eta >0$, then
$ d (f (X(t)) - f(x_{\min})) \leq -\eta (f (X(t)) - f(x_{\min})) dt$ and hence $(f (X(t)) - f(x_{\min})) \leq f (X(0)) - f(x_{\min})) e^{-\eta t}$.

Although above we have taken the example of deterministic convex optimization, it conveys the strategy of using reflected dynamics. 
In contrast to the approaches of penalizing the objective function (see e.g. \cite{carrillo_consensus-based_2023,borghi_constrained_2023,bae_constrained_2022}), here we do not need to modify $f(x)$, noting that modifying $f(x)$ so that it preserves the global optimum within/near the constraint set $\bar G$ is not a trivial task, especially in high dimensions. Moreover, introducing artificial barriers in the objective function typically leads to SDEs with their coefficients taking very large values outside the domain $\bar G$ (effectively, making the SDEs very stiff) which in turn requires use of numerical methods approximating these stiff SDEs with small time steps or of complex nature. This problem does not arise within the reflected SDEs setting considered in this work.

Further, as it has been seen from the above example, we can split our analysis into two steps. In the first step, we establish convergence of continuous-time dynamics to the desired quantity, which can be the global minimum in an optimization scenario or a functional of the desired measure in the case of sampling. The next step is to approximate these continuous-time reflected dynamics, for which a plethora of numerical discretization schemes are available to us. Each of these schemes can be considered as an optimization or sampling technique. For weak-sense approximation (i.e. approximating expectation of function of dynamics with sufficiently large class of functions), we have Lepingle's procedure in the half space setting \cite{lepingle1995euler}, Milstein's change of coordinates scheme \cite{GN96a}, the half-space Euler scheme  \cite{gobet_half_Space_2001}, symmetrized-reflected scheme \cite{BGT04, leimkuhler2023simplerandom}, and specularly reflected scheme \cite{LST2024cld}. All these numerical schemes are analyzed in a non-convex setting except \cite{lepingle1995euler}.   For mean-square (strong) approximation, we have the projection scheme \cite{PET95, SLO01} and the penalty scheme \cite{SLO01}. We mention that projection and penalty schemes have only been analyzed in the convex bounded domain setting, and even then with suboptimal convergence rate (except when the domain is a convex polytope). We note that study of numerical approximation of reflected SDEs is not the aim of this paper, rather we are interested in well-posedness of mean-field SDEs, their particle approximation, and their large time behavior.

In Section~\ref{section_2}, we establish notation and introduce the set-up of reflected non-linear (in the sense of McKean) SDEs and describe applications to sampling and optimization.  We establish well-posedness in Section~\ref{sec:wp} and prove propagation of chaos in the strong sense in Section~\ref{sec:particles2mf}. We devote Section~\ref{sec:longtime} to large time investigation of reflected mean-field  SDE with additive noise and CBO models. The last section (Section~\ref{sec:exp}) contains several numerical experiments to validate the performance of CBO models with convex as well as non-convex constraints.

\section{Reflected McKean-Vlasov SDEs } \label{section_2}

In this section, we first introduce reflected mean-field SDEs and their particle approximation. Then we illustrate their applications in solving constrained optimization and sampling problems. 

Let $\mathcal{P}(\bar G)$  be the space of probability measures on $\bar G$ and $b:\mathbb{R}_{+}\times \bar G \times \mathcal{P}(\bar G) \rightarrow\R^{d}$  and $\sigma:\mathbb{R}_{+}\times\bar G \times \mathcal{P}(\bar G) \rightarrow\R^{d\times d} $. We will impose conditions on these coefficients later.  By $\nu(x)$ denote the unit inward normal at point $x$ belonging to boundary $\partial G$.  Let $(\Omega, {\cal F}, \mathbb P)$ be a complete, sufficiently rich probability space and ${\cal F}_t$, $0 \leq t \leq T$, be a filtration satisfying the usual hypothesis. Let  $(W(t),{\cal F}_t)$  and $(\textbf{W(t)},{\cal F}_t)$ be standard $d$-dimensional and $Nd$-dimensional Wiener processes, respectively, with $\textbf{W(t)}=(W^1(t), \ldots, W^N(t))^{T}$, where $W^i(t)$ are independent standard $d$-dimensional Wiener processes.
We will also use the notation: ${\cal F}_t^W$ is the natural filtration for the Wiener process $W(t)$ and ${\cal F}_t^{\textbf{W}}$ is the natural filtration for the Wiener process $\textbf{W(t)}$.

Consider a non-linear (in the sense of McKean) Markov process evolving on $\bar{G}$ driven by the SDEs
\begin{equation}
X(t) = X(0) + \int_0^t b(s, X(s), \mathcal{L}_{X(s)}) \d s + \int_0^t \sigma(s, X(s), \mathcal{L}_{X(s)}) \d W(s) + \int_0^t \nu(X(s)) I_{\partial G}(X(s)) \d L(s),     \label{eq:MVsde}
\end{equation}
where $\mathcal{L}_{X(s)}$ is the time marginal law of $X(s)$, and $L(s)$ is a scalar non-decreasing process which increases only when $X({s}) \in \partial G$ (see the precise definition in e.g. \cite{lions_stochastic_1984,Ikeda_Watanabe,Freidlin85}):
$L(t) = \int_{0}^{t}I_{\partial G}\big(X(s)\big)dL(s)$ a.s. We also note (see \cite{lions_stochastic_1984}) that the integral form of the local time term of (\ref{eq:MVsde}),
$K(t)=\int_{0}^{t}\nu(X(s))I_{\partial G}\big(X(s)\big)dL(s)$,
is a $d$-dimensional bounded variation process which increases only when $X(s) \in \partial G$.

The first step towards implementation of models based on (\ref{eq:MVsde}) is their particle approximation.  
Let $\delta_x$ be the Dirac measure defined as $\delta_x(A)=I_A(x)$ with $A$ being a measurable set and, for a collection of random variables written in the tuple form as $Z = (Z^1, \ldots, Z^N)^{\top}$, define the empirical measure
\begin{equation}
    \hat{\mu}_{Z} = \frac{1}{N} \sum_{i=1}^N \delta_{Z^{i}}.
    \label{eq:emp_measure}
\end{equation}
The system of $N$ interacting  particles takes the form
\begin{align}
    X^{i,N}(t) = X^{i,N}(0) &+ \int_0^t b\Big(s, X^{i,N}(s), \hat{\mu}_{\textbf{X}^N(s)}\Big) \d s + \int_0^t \sigma\Big(s, X^{i,N}(s), \hat{\mu}_{\textbf{X}^N(s)}\Big) \d W^i(s) \nonumber \\
    &+ \int_0^t \nu(X^{i,N}(s)) I_{\partial G}(X^{i,N}(s)) \d L^{i,N}(s),
    \label{eq:particle_system}
\end{align}
where $\textbf{X}^N(t)=(X^{1,N}(t), \ldots, X^{N,N}(t))^{\top}$ and $L^{i,N}(t)$ is the local time of the $i$-th particle on the boundary $\partial G$. 
In Section~\ref{sec:wp}, we establish well-posedness of (\ref{eq:MVsde}) and (\ref{eq:particle_system}) under some general assumptions on the coefficients and the domain, and in Section~\ref{sec:particles2mf}, we prove convergence of the particle system to its mean-field limit (\ref{eq:MVsde}). In Section~\ref{subsec_reflection_coupling_conv}, we study large-time behaviour of (\ref{eq:MVsde}) with $\sigma \equiv I$ via reflection coupling.

Different choices of $b$ and $\sigma$ in (\ref{eq:MVsde}) can lead to different models for solving constrained optimization and sampling problems. In Section~\ref{subsec_mean_field_Lang}, we present reflected mean-field Langevin SDEs which can be used for sampling from non-linear measure with compact support. In Sections~\ref{subsec_2.1} and~\ref{subsec_2.2}, based on (\ref{eq:MVsde}),  we present  two optimization methods of the CBO-type which we later analyze.
The CBO methods are derivative-free. Although we present mean-field Langevin and CBO-type models as examples of reflected mean-field SDEs, we highlight that 
 one can also write reflected versions of other interacting particle based sampling and optimization models for which our results on well-posedness from Section~\ref{sec:wp} and convergence of the particle approximation with the optimal rate from Section~\ref{sec:particles2mf} hold by verifying the assumptions without difficulty. Ensemble Kalman-Langevin sampler is proposed for sampling in \cite{stuart_eks_2020}.  Its dynamics orchestrate the interaction among particles via ensemble covariance matrix and also utilize gradient information, making it a gradient based method. The model can be used for constrained sampling by formulating it within the reflected SDEs setting and can  also be turned into a constrained optimization model by exploiting the fact that the measure concentrates on the global minimum for lower temperature. Hence, in this case, this model can be written in the form (\ref{eq:particle_system}).
 The same can be said for swarm gradient dynamics  with measure dependent annealing studied in \cite{ding2024swarm} and swarm dynamics of \cite{bolte2024swarm} which can be put in the framework of reflected SDEs for constrained optimization. As already mentioned, since we prove our results of well-posedness of reflected McKean-Vlasov SDEs and interacting particle system in Section~\ref{sec:wp} and propagation of chaos result  in Section~\ref{sec:particles2mf} for general SDEs, these results are applicable to all the above mentioned additional models without any difficulty. These existence and convergence results will also hold true, with minor modifications, for reflected versions of mean-field ODE based models like the Stein variational gradient descent \cite{Liu_svgd_2016} and ensemble Kalman inversion (noise free setting) \cite{reich2015probabilistic, schillings2017analysis} for sampling and optimization, respectively.

\subsection{Reflected mean-field Langevin dynamics}  \label{subsec_mean_field_Lang}

Consider the following mean-field Langevin dynamics with reflection:
\begin{align} \label{mean_field_Lang}
    d X(t) = - \nabla U(X(t)) dt - \int_{\bar{G}}\nabla V(X(t) - y)\mathcal{L}_{X(t))}(dy) + \sigma dW(t) + \nu(X(t)) dL(t), 
\end{align}
where $U :\bar{G } \rightarrow \mathbb{R} $ is an external potential, $ V :\bar{G } \rightarrow \mathbb{R} $ is an interaction potential, and $\sigma >0$. The primary motivation for this model comes from statistical physics. In $\mathbb{R}^d$ its well-posedness and convergence to equilibrium were studied in \cite{malrieu2003convergence, carrillo2006contractions}. 
This object has recently gathered more interest due to its link to the mean-field point of view on neural networks \cite{sirignano_konstantinos2020mean, mei_nguyen2018mean, hu_siska_szpruch_2021mean}. Empirical investigation in  \cite{leimkuhler2021better} suggests that constraint training of neural network can avoid over-fitting and provide stability. In the  mean-field point of view, the problem of training a two-layer wide network with constraints can be posed as the mean-field reflected dynamics (\ref{mean_field_Lang}).

Let us denote by $(V* \mu) (x)$ the convolution $\int_{\bar{G} } V(x-y) \mu(dy) $. The nonlinear measure given by
\begin{align}
    \tilde{\mu} \propto \exp\Big( -\frac{2}{\sigma^{2}} \big( U + V*\tilde{\mu}\big)\Big)   \label{mean_field_Lang_station_measure}
\end{align}
is invariant for the non-linear Markov processes evolving according to the mean-field Langevin dynamics (\ref{mean_field_Lang}) on $\bar{G}$.   This can be confirmed by checking that the density $\rho$  of the measure $\mu$ satisfies the (non-linear) stationary Fokker-Plank equation:
\begin{align}
   -\nabla \cdot (\rho(x) b(x, \rho)) + \frac{\sigma^{2}}{2} \Delta \rho(x) = 0,\quad x \in G 
\end{align}
with boundary condition:
\begin{align}
    \frac{\sigma^{2}}{2}(\nabla \rho(x) \cdot \nu(x)) - (b(x,\rho)\cdot \nu(x))\rho(x) = 0, \quad x \in \partial G,
\end{align}
where $b(x, \rho) = - \nabla U(x) - \int_{\bar{G}} \nabla V(x- y)\rho(y) dy $. Long-time simulation of the non-linear SDEs (\ref{mean_field_Lang}) can produce samples from the implicitly defined measure  (\ref{mean_field_Lang_station_measure}) supported on $\bar{G}$. If the coefficients satisfy Assumption~\ref{as:new} then it ensures the optimal rate of convergence for particle approximation to mean-field limit (see Theorem~\ref{thm:chaos}).  
The result regarding weak convergence towards equilibrium of Section~\ref{subsec_reflection_coupling_conv}  holds for (\ref{mean_field_Lang}) allowing for non-convex potentials $U$ and $V$.

\subsection{Reflected consensus-based models}\label{sec:rcbo}

In this section we consider consensus-based models in the form of reflected SDEs which can be used for solving the constrained optimization problem: 
\begin{equation}
    \min\limits_{x \in \bar G} f(x), \label{eq:obj}
\end{equation}
where $f : \bar{G} \rightarrow [0, \infty)$ is an objective function (which can be non-convex) and $G \subset \mathbb{R}^{d}$ is a bounded domain with a sufficiently smooth boundary $\partial G$. Since $\bar{G}$ is compact, $f$ obtains its infimum and supremum over $\bar{G}$.

\begin{assumption}[Objective function]
    \label{as:objective}
   (i) The objective function satisfies 
    \begin{equation}
    f_{\min} : = \inf_{x \in \bar{G}} f(x) > 0,
\end{equation}
and the minimizer of $f$, denoted as $x_{\min}$ is unique, i.e.,
there is only $x_{\min} \in \bar{G}$ such that
 $   f(x_{\min}) = f_{\min}.$

  (ii)  There exists $L_f > 0$ such that
    \begin{equation}
        \abs{f(x) - f(y)} \leq L_f \abs{x-y},
    \end{equation}
    for all $x, y \in \bar G$.
\end{assumption}

We denote the supremum of $f$ as 
\begin{equation}
    f_{\max} = \sup_{x \in \bar{G}} f(x).
\end{equation}

\subsubsection{Reflected consensus-based optimization}\label{subsec_2.1}

In \cite{pinnau_consensus-based_2017}, the metaheuristic method of consensus-based optimization (CBO) was introduced. In this model, the energy landscape of the objective function is explored by $N$ interacting particles.
Each particle broadcasts its current location to the other particles via an average location, which is weighted according to the energy landscape: particles where the objective function is small are given higher weights than those where the objective function is large. The position of each particle updates continuously so that each particle drifts towards this weighted average, whilst also exploring its current neighbourhood through random perturbations. These dynamics allow for the continuous-time formulation of the CBO model via SDEs. The mathematical framework for CBO has been developed in \cite{carrillo_analytical_2018, carrillo_consensus-based_2021, ha_convergence_2020, tretyakov_consensus-based_2023}. A survey on the topic is available in \cite{totzeck_trends_2021}. 

 Let us present the reflected version of mean-field consensus-based dynamics (see also \cite{Massimo24}): 
\begin{align}\label{eq:mf_cbo}
    \d X(t) &= -\beta (X(t)- \bar{X}(t))\d t + \sigma \diagon(X(t) - \bar{X}(t))\d W(t) + \nu(X(t)) I_{\partial G}\,(X(t))\d L(t),
\end{align}
where the weighted mean $\bar{X}(t)$ (depending on the objective $f$ and some $\alpha > 0$) is
\begin{align}
    \bar{X}(t) :=  \bar{X}(\mathcal{L}_{X(t)}) = \frac{\int_{\bar{G}}x e^{-\alpha f(x)}\mathcal{L}_{X(t)}(\d x)}{\int_{\bar{G}}e^{-\alpha f(x)}\mathcal{L}_{X(t)}(\d x)},
\end{align}
$\beta>0$ and $\sigma>0$ are constant (see a discussion in \cite{tretyakov_consensus-based_2023} on potential benefits of choosing them dependent on time). The relationship between $\beta$ and $\sigma$ required for convergence of $X(t)$ to the global minimum  $x_{\min}$ will be established within convergence analysis in Section~\ref{sec:globminim}.
 
 The particle approximation of (\ref{eq:mf_cbo}) is given by 
\begin{align}\label{particle_cbo}
    \d X^{i, N}(t) &= - \beta (X^{i,N}(t) - \bar{X}^{N}(t)) \d t + \sigma  \diagon(X^{i,N}(t) -  \bar{X}^{N}(t)) \d W^{i}(t) \nonumber  \\ & \;\;\; + \nu(X^{i,N}(t)) I_{\partial G}(X^{i,N}(t))\d L^{i,N}(t),\;\;\; X^{i,N}(0) = X^{i,N}_{0},
\end{align}
where the weighted mean $\bar{X}^{N}(t)$ is 
\begin{align}\label{interac_term}
    \bar{X}^{N}(t) := \bar{X} ^ {N, f, \alpha}(t) := \frac{\sum_{i=1}^{N} X^{i,N}(t) e^{-\alpha f(X^{i,N}(t))}}{\sum_{i=1}^{N}e^{-\alpha f(X^{i,N}(t))}},
\end{align}
which we will refer to as the particles' consensus.  
 In corresponding particles system (\ref{particle_cbo}), the particles interact with each other via (\ref{interac_term}) and try to realize a uniform consensus. 

 Constrained CBO has been considered in \cite{borghi_constrained_2023, bae_constrained_2022, carrillo_consensus-based_2023}. The distinction of our work lies in the fact that we encode the constraints into the continuous-time model with the use of reflected SDEs. While completing this work, we became aware of the paper \cite{Massimo24}, where the model (\ref{eq:mf_cbo}) is also considered. In the convex domain setting, well-posedness, convergence of particle approximation and convergence of CBO model to its global minimum are shown in  \cite{Massimo24} using Tanaka's trick (note that for more general mean-field SDE in convex domains well-posedness and propagation of chaos were earlier proved in \cite{adams_large_2022}). Here, we consider CBO and its new modified version with repelling forces in next subsection as examples and prove well-posedness and convergence of particle system in non-convex domains satisfying uniform exterior sphere condition. For showing convergence to global minimum, we switch to a convex setting and employ techniques introduced in \cite{carrillo_consensus-based_2021}. We discuss in detail the reasoning behind moving to the convex setting in Section~\ref{sec:globminim} and provide useful insight for further development of CBO models.  In addition to the projection numerical method for approximating reflected SDEs (also used in \cite{Massimo24}), we also investigate performance of the penalty scheme for constrained optimization.

\subsubsection{Reflected consensus-based optimization with attracting and repelling forces}\label{subsec_2.2}

In the reflected CBO model (\ref{eq:mf_cbo}), 
the particles are attracted towards its weighted mean portraying exploiting behavior based on already searched space, and the exploration is achieved thanks to noise induced by independent Brownian motions driving each particle.  In \cite{tretyakov_consensus-based_2023}, the exploration is enhanced by adding compound Poisson processes. Another way to facilitate exploration is by incorporating repelling forces among particles with a decaying parameter. The model embodying both attractive and repelling behavior takes the form
\begin{align}\label{eq:mf_cbo_repul}
    \d X(t) &= -\beta (X(t) - \bar{X}(t))\d t + \lambda(t) \int_{\mathbb{R}^{d}}(X(t)- y)\exp{\Big(-\frac{1}{2}| X(t)  - y|^{2}\Big)}\mathcal{L}_{X(t)}(\d y) dt\nonumber \\& \;\;\; + \sigma \diagon(X(t) - \bar{X}(t))\d W(t) + \nu(X(t)) I_{\partial G}\,(X(t))\d L(t),
\end{align}
and its particle approximation is given by
\begin{align}
    \d X^{i,N}(t) & = - \beta (X^{i,N}(t) - \bar{X}^{N}(t)) \d t+ \frac{\lambda(t)}{N} \sum_{j=1}^{N}(X^{i, N}(t) - X^{j,N}(t))\exp\Big( - \frac{1}{2}|X^{i, N}(t) - X^{j,N}(t)|^{2}  \Big)\d t \nonumber \\& \;\;\;  +  \sigma \diagon((X^{i,N}(t) - \bar{X}^{N}(t)))\d W^{i}(t) + \nu(X^{i,N}(t))I_{\partial G}\,(X^{i,N}(t))dL^{i,N}(t).
    \label{CBO_func}
\end{align}
Here $\beta>0$ and $\sigma>0$ are constant, and $\lambda(t) \geq 0$ is a decreasing function of $t$ and preferably should have exponential decay in later steps of the method.
The repelling force between any two particles decays with increase of the distance between the two particles to ensure there is no explosion in the dynamics. When particles are close to each other they experience more repelling to avoid collapse of the ensemble at a local minimum. 
The relation among $\beta$, $\sigma$ and $\lambda$ will be discussed in Section~\ref{sec:globminim}.

\section{Well-posedness results} \label{sec:wp}

In this section, we aim to show: (i) well-posedness of the mean-field SDEs \eqref{eq:MVsde} (existence and uniqueness; Section~\ref{subsec:wp_mvsde}) and (ii) well-posedness of the particle system \eqref{eq:particle_system} (existence and uniqueness; Section~\ref{subsec:wp_particle}). 
We also verify (Section~\ref{subsec:wp_ccbo}) that the assumptions imposed on the coefficients of \eqref{eq:MVsde} and on the domain $G$ under which these well-posedness results hold are satisfied for the two CBO models from Sections~\ref{subsec_2.1} and~\ref{subsec_2.2}.



\subsection{Well-posedness of reflected McKean-Vlasov SDEs}
\label{subsec:wp_mvsde}

If $G$ were a convex domain, then it is easier to deal with the local time term in \eqref{eq:MVsde} using Tanaka's trick \cite{tanaka_stochastic_1979} which is exploited in the McKean-Vlasov setting in \cite{adams_large_2022}. 
Here we do not assume that $G$ is convex, but instead assume that $G$ is bounded and rely on the boundary $\partial G$ satisfying the uniform exterior sphere condition. Sznitman \cite{sznitman_nonlinear_1984} considered this case of non-convex $G$ but only for a first-order interaction. Note that the optimization and sampling models discussed in Section~\ref{section_2}  do not fall into this category, and hence we study the general mean-field SDEs \eqref{eq:MVsde}.

The standard way to prove existence and uniqueness is to appeal to a fixed point argument. To do this, one shows that the map from an arbitrary measure to the measure of the solution of the mean-field SDEs constructed from the arbitrary measure is a contraction. In our setting, this standard fixed-point argument can be used. The existence and uniqueness of a fixed point corresponds to the existence and uniqueness of a solution to the reflected mean-field SDEs \eqref{eq:MVsde}.

For a measurable space $S$, let $\mathcal{P}(S)$ denote the space of probability measures on $S$.
We let $\mathcal{C} = C(\R_+, \bar G)$, equipped with the Borel $\sigma$-algebra generated by the uniform norm topology, and let $\mathcal{M} = \mathcal{P}(\mathcal{C})$.
For a measure $\mu \in \mathcal{M}$, we denote by $\mu_t$ its projection onto $\mathcal{P}(\bar G)$ at time $t$, i.e. its marginal:
\begin{equation}
    \mu_t(A) = \int_\mathcal{C} I_A(w_t) \d \mu(w), \quad A \in \mathcal{B}(\bar G).
\end{equation}
For two measures, $\mu^1, \mu^2 \in \mathcal{P}(\bar G)$, let $\W_p$ be the Wasserstein $p$-metric defined as
\begin{equation}
    \W_p(\mu^1, {\mu}^2) := \inf_{\gamma \in \Gamma(\mu^1, \mu^2)} \bigg[ \int_{\bar G \times \bar G} \abs{x-y}^p \d \gamma(x, y) \bigg]^{\frac{1}{p}},
    \label{eq:d_metric}
\end{equation}
where $\Gamma(\mu^1, {\mu}^2)$ denotes the set of couplings between $\mu^1$ and ${\mu}^2$.


\begin{assumption}
    \label{as:smooth}
    The domain $G \subset \R^d$ is bounded and its boundary $\partial G$ is $C^3$.
\end{assumption}
The smoothness of the boundary assumed here is required so that the distance function to the boundary of $G$, $d(\cdot, \partial G)$, defined on a neighbourhood of $\partial G$ is $C^2$ (see \cite[Section~3, Lemma~1]{serrin_problem_1969}). We make use of this in Theorem~\ref{thm:wp_mf}.

\begin{assumption}[Uniformly Lipschitz in space and measure]
    \label{as:lipschitz}
     There exists $L > 0$ such that for all $t \geq 0$
     \begin{align}
         \abs{b(t, x, \mu^1) - b(t, y, \mu^2)} + \abs{\sigma(t, x, \mu^1) - \sigma(t, y, \mu^2)} \leq L (\abs{x - y} + \W_4(\mu^1, \mu^2)), \, \forall x, y \in \bar G, \, \forall \mu^1, {\mu}^2 \in \mathcal{P}(\bar G).
     \end{align}
    Also, the coefficients $b(t, x, \mu)$ and $\sigma(t, x, \mu)$ are continuous in $t$. 
\end{assumption}

Since $\partial G$ is sufficiently smooth, it satisfies the uniform exterior sphere condition. That is, there exists $R_0 > 0$ such that $\forall x \in \partial G$,
\begin{equation*}
    \bar B(x - R_0 \nu(x), R_0 ) \cap \partial G = \{ x \},
\end{equation*}
where $\bar B(x, R_0)$ denotes the closed ball of radius $R_0$ centred at $x$ and $\nu(x)$ is the inward normal vector field at $x \in \partial G$.
The constant $r$ is called the uniform exterior sphere constant of $G$. This is equivalent to the following condition: 
\begin{equation}
    \exists c > 0, \quad \forall x \in \partial G, \quad \forall y \in \bar{G}, \quad c \abs{x-y}^2 \geq (x-y)^\top \nu (x),
    \label{eq:ext_sphere_condition}
\end{equation}
see for example \cite{pilipenko_introduction_2014}.
The constant $c$ is related to the uniform exterior sphere constant by $c = \frac{1}{2R_0}$.


\begin{theorem}
    \label{thm:wp_mf}
    Let Assumptions~\ref{as:smooth} and \ref{as:lipschitz} hold. There exists a unique pair of continuous ${\cal F}_t^W$-adapted processes $(X(t), L(t))$ such that (i) $X(t) \in \bar{G}$ for all $t \geq 0$; (ii) $L(t)$ is non-decreasing with $L(0) = 0$ and for all $t\geq 0$,
    \begin{equation}
                L(t) = \int_0^t I_{\partial G}(X(s)) \d L(s); 
            \end{equation}
    and (iii)  for all $t \geq 0$, 
            \begin{equation}
                X(t) = X(0) + \int_0^t b(s, X(s), \mathcal{L}_{X(s)}) \d s + \int_0^t \sigma(s, X(s), \mathcal{L}_{X(s)}) \d W(s) + \int_0^t \nu(X(s)) \d L(s).
                \label{eq:MVsde_3.8}
            \end{equation}       
\end{theorem}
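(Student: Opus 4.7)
The plan is a Picard-type fixed-point argument on the path-measure space $\mathcal{M}_T := \cP(C([0,T],\bar G))$, equipped with the Wasserstein-type distance
\begin{equation*}
d_T(\mu,\nu)^4 := \inf_\pi \E\Bigl[\sup_{s\in[0,T]}|w_s-w'_s|^4\Bigr],
\end{equation*}
the infimum taken over couplings $\pi$ of $\mu$ and $\nu$. For fixed $\mu \in \mathcal{M}_T$, the coefficients $(t,x)\mapsto b(t,x,\mu_t)$ and $(t,x)\mapsto \sigma(t,x,\mu_t)$ are continuous in $t$ and uniformly Lipschitz in $x$ by Assumption~\ref{as:lipschitz}, and Assumption~\ref{as:smooth} gives $\partial G$ of class $C^3$, hence the uniform exterior sphere condition. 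The classical well-posedness theory for reflected SDEs with Lipschitz coefficients in smooth bounded domains (Lions--Sznitman, Saisho) then yields a unique strong solution $(X^\mu,L^\mu)$ of the frozen reflected SDE on $[0,T]$ driven by $W$, with paths in $\bar G$. Define $\Phi \colon \mathcal{M}_T \to \mathcal{M}_T$ by $\Phi(\mu) := \mathcal{L}(X^\mu)$; a fixed point of $\Phi$ is exactly a solution of \eqref{eq:MVsde_3.8}, and uniqueness of the fixed point will give pathwise uniqueness.

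To show $\Phi$ is a strict contraction for $T$ small, I would couple $X^j := X^{\mu^j}$ for $j=1,2$ through a common Wiener process and initial condition, then apply It\^o's formula to $|X^1(t)-X^2(t)|^2$. The drift and diffusion pieces contribute terms of order $L(|X^1-X^2| + \W_4(\mu^1_s,\mu^2_s))$ by Assumption~\ref{as:lipschitz}, while the local-time contribution is dominated by
\begin{equation*}
2(X^1-X^2) \cdot \bigl(\nu(X^1)\,\d L^1 - \nu(X^2)\,\d L^2\bigr) \leq 2c|X^1-X^2|^2(\d L^1 + \d L^2)
\end{equation*}
via the uniform exterior sphere condition~\eqref{eq:ext_sphere_condition}. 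Since $L^i$ has bounded variation but unbounded instantaneous rate, standard Gronwall does not apply directly; the cleanest remedy is to multiply through by the integrating factor $\exp(-2c(L^1+L^2))$, killing the local-time forcing, and then use exponential-moment bounds for $L^i(T)$ obtained by applying It\^o's formula to a $C^2$ extension of $d(\cdot,\partial G)$ (this extension exists by the $C^3$ regularity in Assumption~\ref{as:smooth}). Combining with Burkholder--Davis--Gundy for the quartic moment and Gronwall's lemma, one arrives at
\begin{equation*}
\E\Bigl[\sup_{s\leq T}|X^1(s)-X^2(s)|^4\Bigr] \leq C(T)\int_0^T \W_4(\mu^1_s,\mu^2_s)^4 \,\d s \leq C(T)\,T\, d_T(\mu^1,\mu^2)^4.
\end{equation*}

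Choosing $T^* > 0$ so small that $C(T^*)T^* < 1$ makes $\Phi$ a strict contraction on the complete metric space $(\mathcal{M}_{T^*},d_{T^*})$; Banach's theorem delivers existence and uniqueness of a fixed point on $[0,T^*]$. Because $C(T^*)$ depends only on $L$, $c$, and the diameter of $\bar G$, the same $T^*$ works on successive intervals $[kT^*,(k+1)T^*]$ with the terminal marginal at $kT^*$ serving as the new initial law, extending the solution to any horizon $T$. The main technical obstacle is the treatment of the local-time contribution in the quartic moment estimate: the unbounded instantaneous rate of $L^i$ forces the use of the exponential integrating factor (or, equivalently, a localisation via the stopping times $\tau_n = \inf\{t: L^1(t)+L^2(t)\geq n\}$ followed by a limit), and in either case one needs moment bounds on $L^i$ that rest on the $C^3$ regularity of $\partial G$. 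A secondary subtlety is that the choice of $\W_4$ as the measure-metric is dictated by Assumption~\ref{as:lipschitz} and is exactly what makes the quartic-moment BDG estimate on the stochastic-integral difference $\int(\sigma(\cdot,\cdot,\mu^1_s)-\sigma(\cdot,\cdot,\mu^2_s))\,\d W$ close up properly.
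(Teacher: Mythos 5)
Your overall architecture coincides with the paper's: a fixed-point map $\mu\mapsto\mathcal{L}(X^\mu)$ on $\cP(C([0,T],\bar G))$ with a $\sup$-in-time $\W_4$-type metric, It\^o's formula for $|X^1-X^2|^2$, the uniform exterior sphere inequality \eqref{eq:ext_sphere_condition} to dominate the reflection terms by $2c|X^1-X^2|^2(\d L^1+\d L^2)$, and BDG/Gr\"onwall to close the quartic estimate. (The paper iterates the integral inequality to make $\Phi^j$ a contraction on all of $[0,T]$ rather than shrinking $T$ and pasting, but that is a cosmetic difference.)

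The genuine gap is in your choice of integrating factor. You propose multiplying by $\exp(-2c(L^1_t+L^2_t))$, which does annihilate the local-time forcing, but this weight is not bounded below: $L^1_T+L^2_T$ is unbounded, so to pass from a bound on $\E\bigl[e^{-4c(L^1_T+L^2_T)}\sup_s|X^1_s-X^2_s|^4\bigr]$ back to $\E\bigl[\sup_s|X^1_s-X^2_s|^4\bigr]$ you must apply H\"older against $\E\bigl[e^{pc(L^1_T+L^2_T)}\bigr]$, and the companion factor then involves a \emph{higher} moment of $|X^1-X^2|$ (e.g.\ the eighth), which destroys the quartic Gr\"onwall/contraction structure; exponential moments of the local time, while available, do not repair this. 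The paper's proof avoids the issue entirely by using the Lions--Sznitman device: the weight is $\exp\{-2c(g(Y^1_s)+g(Y^2_s))\}$, where $g$ is a smooth \emph{bounded} extension of $d(\cdot,\partial G)$ (this is what the $C^3$ regularity in Assumption~\ref{as:smooth} buys). Since $\nabla g=\nu$ on $\partial G$, It\^o's formula applied to $g(Y^i_t)$ reproduces $\d L^i$ exactly, so the same cancellation of the local-time terms occurs, but the weight is now bounded above and below by positive constants and can be removed at the cost of a fixed multiplicative constant; the extra drift and martingale terms it generates involve only $\nabla g$, $\nabla^2 g$, $b$, $\sigma$ on $\bar G$ and are harmless. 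You should replace your integrating factor with this bounded one (or, equivalently, rerun your localisation argument with $g(Y^1)+g(Y^2)$ in place of $L^1+L^2$) for the quartic estimate to close.
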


Such a theorem is proved in \cite{sznitman_nonlinear_1984} in the case of first-order interaction rather than $\eqref{eq:MVsde}$.
Our proof of Theorem~\ref{thm:wp_mf} borrows certain arguments and notation from \cite{sznitman_nonlinear_1984} which also draws techniques from \cite{lions_stochastic_1984} to deal with the local time term.

\begin{proof}[Proof of Theorem \ref{thm:wp_mf}]
For brevity, we denote $X_s := X(s)$, 
$W_s := W(s)$, and $L_s : = L(s)$. 

    Consider the map $\Phi : \mathcal{M} \rightarrow \mathcal{M}$ defined by $\Phi(\mu) = \mathcal{L}_{Y^\mu}$ where $\mathcal{L}_{Y^\mu}$, is the law of $\{Y^\mu_t\}_{t \geq 0}$ which is defined as
    \begin{equation}
        Y^\mu_t = X_0 + \int_0^t b(s, Y^\mu_s, \mu_s) \d s + \int_0^t \sigma(s, Y^\mu_s, \mu_s) \d W(s) 
        + \int_0^t \nu(Y^\mu_s) I_{\partial G}\,(Y^\mu_t) \d L(s).
        \label{eq:Y_def}
    \end{equation}
    The process $Y$ is decoupled from its own measure and, as such, is just an ordinary reflected SDE. The existence and uniqueness of a solution to \eqref{eq:Y_def} is proved in \cite{lions_stochastic_1984} when the coefficients $b$ and $\sigma$ are defined on the whole space $\R^d$. We note that we can extend our coefficients smoothly to the whole space, and the choice of extension does not affect the solution $Y^\mu$. Hence, the map $\Phi$ is well-defined.
    From the definition of $\Phi$, if $\Phi$ has a unique fixed point, then the processes associated with this fixed point are the unique solution to \eqref{eq:MVsde}. 
    Our objective is to show that there exists a unique fixed point by demonstrating that, for some $j$, the $j$-fold composition, $\Phi^j$, is a contraction.
    
    Let us fix an arbitrary $T > 0$ and consider the space $\mathcal{C}_T = C([0, T], \bar G)$ and $\mathcal{M}_T : = \mathcal{P}(\mathcal{C}_T)$ equipped with the metric
    \begin{equation*}
        D_T(\mu^1, \mu^2) = \inf_{\gamma \in \Gamma(\mu^1, \mu^2)} \bigg[ \int_{\mathcal{C}_T \times \mathcal{C}_T} \Big( 
        \sup_{s \leq T}\abs{X(s)-Y(s)} \Big)^4 \d \gamma(X, Y) \bigg]^{\frac{1}{4}}.
    \end{equation*}
    Note that $D_T$ is the Wasserstein $4$-distance with respect to the norm $\sup_{s\leq T}\abs{X(s)}$, $X \in \mathcal{C}_T$.
    Completeness of the space $(\mathcal{M}_T, D_T)$ follows from separability and completeness of $(\mathcal{C}_T, \sup_{s\leq T} \abs{X(s) - Y(s)})$ \cite{bogachev_monge-kantorovich_2012, bolley_separability_2008}. 
    
    \begin{lemma}
        Under Assumptions~\ref{as:smooth} and \ref{as:lipschitz}, there exists $K_T > 0$ such that for all $\mu_1, \mu_2 \in \mathcal{M}$,
        \begin{equation}
            D_T^4 \big(\Phi(\mu^1), \Phi(\mu^2)\big) \leq K_T \int_0^T D_u^4(\mu^1, \mu^2) \d u.
            \label{eq:contraction}
        \end{equation}
        \label{lemma:contraction}
    \end{lemma}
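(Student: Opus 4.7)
The plan is a synchronous-coupling Gr\"onwall argument. I would couple $Y^{\mu^1}$ and $Y^{\mu^2}$ by driving both reflected SDEs with the same Wiener path $W$ and the same initial value $X_0$; because this produces an admissible element of $\Gamma(\Phi(\mu^1),\Phi(\mu^2))$, the resulting bound on $\E[\sup_{s\le T}|Y^{\mu^1}_s-Y^{\mu^2}_s|^4]$ dominates $D_T^4(\Phi(\mu^1),\Phi(\mu^2))$. Setting $Z_t := Y^{\mu^1}_t - Y^{\mu^2}_t$ and applying It\^o's formula, Assumption~\ref{as:lipschitz} together with Young's inequality converts the drift and diffusion discrepancies into contributions of order $|Z_s|^4 + \W_4(\mu^1_s,\mu^2_s)^4$, while the stochastic integral is handled via the Burkholder-Davis-Gundy inequality. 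Since $\W_4(\mu^1_s,\mu^2_s)^4 \le D_s^4(\mu^1,\mu^2)$, the measure-discrepancy part is already in the form required by \eqref{eq:contraction}.

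The hard part is the local-time cross term in the It\^o expansion of $|Z_t|^4$, schematically
\[
\int_0^t |Z_s|^2 \bigl[ Z_s^{\top}\nu(Y^{\mu^1}_s) I_{\partial G}(Y^{\mu^1}_s)\,\d L^1_s - Z_s^{\top}\nu(Y^{\mu^2}_s) I_{\partial G}(Y^{\mu^2}_s)\,\d L^2_s \bigr],
\]
which in the convex case is non-positive by Tanaka's trick but here only obeys the one-sided bound $Z_s^{\top}\nu(Y^{\mu^i}_s) \le c|Z_s|^2$ afforded by the exterior sphere inequality \eqref{eq:ext_sphere_condition}. Because $L^1,L^2$ are merely of bounded variation, the resulting $c|Z_s|^4\,\d L^i$ contributions cannot be Gr\"onwalled against $\d s$ directly. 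Following the strategy of \cite{sznitman_nonlinear_1984, lions_stochastic_1984}, I would apply It\^o's formula not to $|Z_t|^4$ itself but to the weighted functional $\Psi(Y^{\mu^1}_t, Y^{\mu^2}_t)\,|Z_t|^4$, where
\[
\Psi(x,y) := \exp\bigl(-\kappa \rho(x) - \kappa \rho(y)\bigr),
\]
and $\rho \in C^2(\bar G)$ agrees with $d(\cdot,\partial G)$ in a neighbourhood of $\partial G$ and satisfies $\nabla\rho\cdot\nu = 1$ on $\partial G$. Existence of such a $\rho$ is guaranteed by Assumption~\ref{as:smooth} via \cite[Section 3, Lemma~1]{serrin_problem_1969}; this is exactly where the $C^3$ regularity of the boundary is used. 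For $\kappa$ sufficiently large (depending only on $c$ and on $\rho$ and its two derivatives), the negative contribution $-\kappa\Psi|Z|^4\,\d L^i$ produced by differentiating $\Psi$ dominates the bad $c\Psi|Z|^4\,\d L^i$, so the net coefficient of each $\d L^i$ becomes non-positive and may be discarded.

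Once the $\d L^i$-terms are neutralised, only $\d s$ and martingale terms of order $|Z_s|^4 + \W_4(\mu^1_s,\mu^2_s)^4$ remain, multiplied by the weight $\Psi$ which is bounded above and below by strictly positive constants on $\bar G$. Taking $\sup_{s\le t}$, using BDG to absorb the martingale maximum into the left-hand side, and invoking Gr\"onwall's lemma on $t \mapsto \E[\sup_{s\le t}|Z_s|^4]$ yields
\[
\E\Bigl[\sup_{s \le T}|Z_s|^4\Bigr] \le K_T \int_0^T D_u^4(\mu^1,\mu^2)\,\d u,
\]
from which \eqref{eq:contraction} follows. The delicate technical point I expect to be the main obstacle is verifying that the extra It\^o terms generated by differentiating $\Psi$ remain $O(|Z|^4)$ and do not spoil the sign of the $\d L^i$ coefficients after the cancellation; this demands careful bookkeeping of first and second derivatives of $\rho$ near $\partial G$, which is precisely what the $C^3$ boundary hypothesis provides.
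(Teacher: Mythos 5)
Your proposal is correct and follows essentially the same route as the paper: synchronous coupling of $Y^{\mu^1}$ and $Y^{\mu^2}$, an exponential weight built from a smooth extension of the distance-to-boundary function whose $\d L^i$-contribution neutralises the local-time cross terms via the uniform exterior sphere condition \eqref{eq:ext_sphere_condition}, followed by Gr\"onwall. The only (immaterial) differences are that the paper weights $|Z_t|^2$ by $\exp\{-2c(g(Y^1_t)+g(Y^2_t))\}$ and then squares the resulting inequality, using Doob's maximal inequality and the It\^o isometry where you weight $|Z_t|^4$ directly with an adjustable $\kappa$ and invoke BDG.
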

    Once the lemma is proved, we can apply \eqref{eq:contraction} twice to yield
    \begin{align}
        D^4_T\big( \Phi (\Phi(\mu^1)), \Phi(\Phi(\mu^2))  \big) \leq K_T \int_{0}^T D^4_{t_1} \big( \Phi(\mu^1), \Phi(\mu^2) \big) \d t_1 \leq K_T^2 \int_0^T \int_0^{t_1} D_{t_2}(\mu^1, \mu^2) \d t_2 \d t_1.
    \end{align}
    Continuing in this way for the $j$-fold composition $\Phi^j$, we obtain
    \begin{align}
        D^4_T\big( \Phi^j (\mu^1), \Phi^j(\mu^2)  \big) &\leq K_T^j \int_{0}^T \int_0^{t_1} \cdots \int_{0}^{t_{j-1}} D^4_{t_j}(\mu^1, \mu^2) \d t_j \cdots \d t_1.
    \end{align}
    Changing the order of integration yields
    \begin{align}
        D^4_T\big( \Phi^j (\mu^1), \Phi^j(\mu^2)  \big) &\leq K_T^j \int_0^T \int_{t_j}^T \cdots \int_{t_2}^T D^4_{t_j}(\mu^1, \mu^2) \d t_1 \cdots \d t_j \\
        &= K_T^j \int_0^T \frac{(T - t_j)^{j-1}}{(j-1)!} D_{t_j}^4 (\mu^1, \mu^2) \d t_j
        \leq \frac{(K_T T)^j}{j!} D_T^4 (\mu^1, \mu^2).
        \label{eq:contraction2}
    \end{align}
    Hence, if we choose $j$ large enough such that
    \begin{equation}
        \frac{(K_T T)^j}{j!} < 1, \nonumber
    \end{equation}
    it follows that $\Phi^j$ is a contraction and $\Phi$ has a unique fixed point, as required to complete the proof. 
 \end{proof}
 
    We now prove Lemma~\ref{lemma:contraction}.


    \begin{proof}[Proof of Lemma \ref{lemma:contraction}]
        Let $\mu^1, \mu^2 \in \mathcal{M}$ and define the processes $Y^1_t := Y^{\mu^1}_t, Y^2_t := Y^{\mu^2}_t$ via \eqref{eq:Y_def}.
        Let $d(\cdot, \partial G)$ denote the distance to the boundary of $G$, defined on some 
        neighborhood of $\partial G$ within $G$ and let the function $g(\cdot)$ be a smooth, bounded extension of it to the whole space.
        Then Ito's formula gives
        \begin{align}
            g(Y^i_t) = g(X_0) &+ \int_0^t \nabla g(Y^i_s) ^\top b(s, Y^i_s, \mu^i_s) + \frac{1}{2}\text{tr}[\sigma(s, Y^i_s, \mu^i_s)^\top \nabla^2 g(Y^i_s) \sigma(s, Y^i_s, \mu^i_s)] \d s  \\
            &+ \int_0^t \nabla g(Y^i_s) ^\top \sigma(s, Y^i_s, \mu^i_s) \d W_s + \int_0^t \nabla g^\top (Y^i_s) \nu (Y^i_s) \d L_s
        \end{align}
        for $i=1, 2$, where $\nabla^2$ denotes the Hessian. Noting that $\nabla g = \nu$ on $\partial G$, we have
        \begin{equation}
            \int_0^t \nabla g^\top (Y^i_s) \nu (Y^i_s) \d L_s = L_t.
        \end{equation}
        Hence,
        \begin{align}
            g(Y^i_t) = g(X_0) &+ \int_0^t \Big( \nabla g(Y^i_s) ^\top b(s, Y^i_s, \mu^i_s) + \frac{1}{2}\text{tr}[\sigma(s, Y^i_s, \mu^i_s)^\top \nabla^2 g(Y^i_s) \sigma(s, Y^i_s, \mu^i_s)] \Big)\d s  \\
            &+ \int_0^t \nabla g(Y^i_s) ^\top \sigma(s, Y^i_s, \mu^i_s) \d W_s + L_t, 
        \end{align}
        In the interest of brevity, we write this as
        \begin{equation}
            g^i_t = g_0 + \int_0^t \Big( {\nabla g^i_s} ^\top b^i_s + \frac{1}{2}\text{tr}[{\sigma^i_s}^\top \nabla^2 g^i_s \sigma^i_s] \Big) \d s + \int_0^t {\nabla g^i_s} ^\top \sigma^i_s \d W_s + L_t^i, \,\, i=1, 2.
        \end{equation}
       
        Let $c > 0$ be the uniform exterior sphere constant from \eqref{eq:ext_sphere_condition}.  By Ito's formula, we have
            \begin{align}
                \exp\{-2c(g^1_t + g^2_t)\} &= \exp \{ - 4 c g_0\} + \int_0^t \exp\{-2c(g^1_s + g^2_s)\} \Big(-2c \tilde{b}_s + 2c^2 \tilde{\sigma}_{s}\tilde{\sigma}_{s}^{\top} \Big)  \d s  \nonumber\\
                &- \int_0^t 2c \exp\{-2c(g^1_s + g^2_s)\} \tilde\sigma \d W_s 
                - \int_0^t 2c \exp\{-2c(g^1_s + g^2_s)\} [\d L_s^1 + \d L_s^2],
            \end{align}
            where 
            \begin{align}
                \tilde{b}_s = (\nabla {g^1_s})^\top b^1_s + (\nabla {g^2_s})^\top b^2_s + \frac{1}{2}\text{tr}[({\sigma^1_s})^\top \nabla^2 g^1_s \sigma^1_s + ({\sigma^2_s})^\top \nabla^2 g^2_s \sigma^2_s],  \,\,       
                \tilde{\sigma}_s = (\nabla {g^1_s})^\top \sigma^1_s + (\nabla {g^2_s})^\top \sigma^2_s.
            \end{align}
            Also,
            \begin{align}
                \abs{Y^1_t - Y^2_t}^2 &= \int_0^t 2 (Y^1_s - Y^2_s)^\top (b^1_s - b^2_s) + \tr [(\sigma^1_s - \sigma^2_s)^\top (\sigma^1_s - \sigma^2_s)] \d s \nonumber \\
                &\qquad+ \int_0^t 2 (Y^1_s - Y^2_s)^\top (\sigma^1_s - \sigma^2_s) \d W_s 
                + \int_0^t 2 (Y^1_s - Y^2_s)^\top [\nu(Y^1_s) \d L^1_s - \nu(Y^2_s) \d L^2_s].
            \end{align}
        
        Then, for the product, Ito's formula yields
        \begin{align}
            \exp & \{-2c (g^1_t + g^2_t)\} \abs{Y^1_t - Y^2_t}^2 \nonumber \\
                & = 2  \int_0^t \exp\{-2c(g^1_s + g^2_s)\}(Y^1_s - Y^2_s)^\top [(b^1_s - b^2_s)\d s + (\sigma^1_s - \sigma^2_s)\d W_s + \nu(Y^1_s)\d L^1_s \nonumber\\
                & \qquad- \nu(Y^2_s) \d L^2_s] + \int_0^t \exp\{-2c(g^1_s + g^2_s)\} \text{tr}[(\sigma_s^1 - \sigma_s^2)^\top (\sigma_s^1 - \sigma_s^2)] \d s \nonumber\\
                & \qquad- 2c\int_0^t \abs{Y^1_s - Y^2_s}^2 \exp\{-2c(g^1_s + g^2_s)\}[\tilde{b}_s \d s + \Tilde{\sigma}^\top_s \d W_s + \d L^1_s + \d L^2_s] \nonumber\\
                & \qquad + 2c^2\int_0^t  \exp\{-2c(g^1_s + g^2_s)\} \abs{Y^1_s - Y^2_s}^2 \tilde{\sigma}_s \tilde{\sigma}^\top_s \d s \nonumber\\
                & \qquad - 4c\int_0^t \exp\{-2c(g^1_s + g^2_s)\} (Y(s)^1 - Y^2_s)^\top (\sigma_s^1 - \sigma_s^2) \tilde{\sigma}^{\top} \d s.
        \end{align}
        
        From the uniform exterior sphere condition \eqref{eq:ext_sphere_condition}, we have
        \begin{align}
            - c \abs{Y^1_s - Y^2_s}^2 + (Y^1_s - Y^2_s)^ \top \nu(Y^1_s) \leq 0, 
            - c \abs{Y^1_s - Y^2_s}^2 + (Y^2_s - Y^1_s) ^\top \nu(Y^2_s) \leq 0, \,\, a.s.
        \end{align}
        To ease notation, let us denote $\kappa(s) : = \exp\{-2c(g^1_s + g^2_s)\}$, and note that $\kappa$ is bounded since $g$ is bounded. Then,
        \begin{align}
            \kappa(t) \abs{Y^1_t - Y^2_t}^2 & \leq 2  \int_0^t \kappa(s)(Y^1_s - Y^2_s)^\top [(b^1_s - b^2_s)\d s + (\sigma^1_s - \sigma^2_s)\d W_s] \nonumber\\
                & \qquad + \int_0^t \kappa(s) \text{tr}[(\sigma_s^1 - \sigma_s^2)^\top (\sigma_s^1 - \sigma_s^2)] \d s 
               - 2c\int_0^t \abs{Y^1_s - Y^2_s}^2 \kappa(s)[\tilde{b}_s \d s + \Tilde{\sigma}_s^\top \d W_s] \nonumber\\
                & \qquad + 2c^2\int_0^t  \kappa(s) \abs{Y^1_s - Y^2_s}^2 \tilde{\sigma}_s \tilde{\sigma}^\top_s \d s 
                 - 4c\int_0^t \kappa(s) (Y(s)^1 - Y^2_s)^\top (\sigma_s^1 - \sigma_s^2) \tilde{\sigma}^\top_s \d s.
        \end{align}
        Squaring both sides and applying the Cauchy-Schwarz inequality twice, we obtain for some constant $K>0$
        \begin{align}
            \abs{Y^1_t - Y^2_t}^4 & \leq K \Bigg [ \int_0^t \big[(Y^1_s - Y^2_s)^\top (b^1_s - b^2_s)\big]^2\d s + \int_0^t \big[\text{tr}[(\sigma_s^1 - \sigma_s^2)^\top (\sigma_s^1 - \sigma_s^2)]\big]^2 \d s \nonumber\\
                & \qquad+ \int_0^t \abs{Y^1_s - Y^2_s}^4 \d s + \int_0^t \big[(Y(s)^1 - Y^2_s)^\top (\sigma_s^1 - \sigma_s^2) \tilde{\sigma}_s\big]^2 \d s \nonumber\\
                & \qquad + \bigg(\int_0^t \kappa(s) (Y^1_s - Y^2_s)^\top (\sigma_s^1 - \sigma_s^2) \d W_s\bigg)^2 
                + \bigg(\int_0^t \kappa(s) \abs{Y^1_s - Y^2_s}^2 \tilde{\sigma}_s \d W_s\bigg)^2 \Bigg],
        \end{align}
        where we use the fact that $\kappa(s)$ is bounded by definition, and $\tilde{b}$ and $\tilde{\sigma}$ are bounded on $\bar G$.
                Then, taking the supremum over $[0, t]$ and taking expectation, we get
        \begin{align}
            \E&\sup_{u \leq t} \abs{Y^1_u - Y^2_u}^4  \leq K \Bigg [ \E \sup_{u \leq t} \bigg ( \int_0^u \big[(Y^1_s - Y^2_s)^\top (b^1_s - b^2_s)\big]^2\d s + \int_0^u \big[\text{tr}[(\sigma_s^1 - \sigma_s^2)^\top (\sigma_s^1 - \sigma_s^2)]\big]^2 \d s \nonumber\\
                & + \int_0^u \abs{Y^1_s - Y^2_s}^4 \d s + \int_0^u \big[(Y_s^1 - Y^2_s)^\top (\sigma_s^1 - \sigma_s^2) \tilde{\sigma}_s\big]^2 \d s \bigg) \nonumber\\
                &  + \E \sup_{u\leq t}\bigg(\int_0^u \kappa(s) (Y^1_s - Y^2_s)^\top (\sigma_s^1 - \sigma_s^2) \d W_s\bigg)^2 
               + \E \sup_{u\leq t}\bigg(\int_0^u \kappa(s) \abs{Y^1_s - Y^2_s}^2 \tilde{\sigma}_s \d W_s\bigg)^2 \Bigg].
        \end{align}
        The Riemann integrals have non-negative integrands so their supremum is reached at $t$. Meanwhile, we use Doob's maximal inequality followed by the Ito isometry for the Ito integrals, which yields
        \begin{align}
            \E\sup_{u \leq t} \abs{Y^1_u - Y^2_u}^4 & \leq K \E \bigg ( \int_0^t \big[(Y^1_s - Y^2_s)^\top (b^1_s - b^2_s)\big]^2\d s + \int_0^t \big[\text{tr}[(\sigma_s^1 - \sigma_s^2)^\top (\sigma_s^1 - \sigma_s^2)]\big]^2 \d s \nonumber\\
                & \qquad+ \int_0^t \abs{Y^1_s - Y^2_s}^4 \d s + \int_0^t \big[(Y(s)^1 - Y^2_s)^\top (\sigma_s^1 - \sigma_s^2) \tilde{\sigma}_s\big]^2 \d s \nonumber\\
                & \qquad + \int_0^t \E \big[(Y^1_s - Y^2_s)^\top (\sigma_s^1 - \sigma_s^2)\big]^2 \d s + \int_0^t \E \abs{Y^1_s - Y^2_s}^4 \d s \bigg),
        \end{align}
        where again we have used boundedness of $\kappa(s)$ and $\tilde{\sigma}$.
        Now using the Cauchy-Schwarz inequality and the Assumption~\ref{as:lipschitz}, we obtain
        \begin{align}
            \E\sup_{u \leq t} \abs{Y^1_u - Y^2_u}^4 & \leq K \E \bigg ( \int_0^t \abs{Y_s^1 - Y^2_s}^2 \abs{b^1_s - b^2_s}^2 + \abs{Y_s^1 - Y^2_s}^2 \abs{\sigma^1_s - \sigma^2_s}^2 \nonumber\\
            & \qquad + \abs{Y_s^1 - Y^2_s}^4 + \abs{\sigma^1_s - \sigma^2_s}^4 \d s \bigg) \label{eq:process_bound0} \\
            &\leq  K \bigg ( \int_0^t \E \abs{Y_s^1 - Y^2_s}^4 + \W_4^4(\mu_s^1, \mu_s^2) \d s \bigg).
            \label{eq:process_bound}
        \end{align}
        {Note that $(Y^1_s - Y^2_s)^\top(\sigma^1_s - \sigma^2_s)\tilde{\sigma} \leq \abs{Y^1_s - Y^2_s}\abs{\sigma_s^1 - \sigma_s^2}$ follows from application of the Cauchy-Schwarz inequality, the operator norm being bounded by Frobenius norm, and $\tilde{\sigma}$ being bounded.} Then,
        \begin{equation}
            \E\sup_{u \leq t} \abs{Y^1_u - Y^2_u}^4 \leq \int_0^t K \E \sup_{u\leq s}\abs{Y_u^1 - Y^2_u}^4 \d s + K \int_0^t \W_4^4(\mu_s^1, \mu_s^2) \d s,
        \end{equation}
        to which Gr\"{o}nwall's lemma can be applied to yield
        \begin{align}
            \E\sup_{u \leq t} \abs{Y^1_u - Y^2_u}^4 & \leq K e^{Kt} \int_0^t \W_4^4(\mu_s^1, \mu_s^2) \d s.
        \end{align}
        Hence
        \begin{align}
            D_T^4 \big( \Phi(\mu^1), \Phi(\mu^2) \big) &\leq \E \sup_{u \leq T} \abs{Y^1_u - Y^2_u}^4 \label{eq:l1_final1} \\
                &\leq K e^{KT} \int_0^T \W_4^4 ( \mu^1_u , \mu^2_u ) \d u \\
                &\leq K e^{KT} \int_0^T D_u^4 (\mu^1, \mu^2) \d u. \label{eq:l1_final3}
        \end{align}
        The inequality \eqref{eq:l1_final1} follows from the definition of the metric $D_T$, while \eqref{eq:l1_final3} is a consequence of $\W_4^4 ( \mu^1_u , \mu^2_u ) \leq D_u^4 (\mu^1, \mu^2)$. Lemma~\ref{lemma:contraction} is proved.
    \end{proof}

\subsection{Well-posedness of the particle system}\label{subsec:wp_particle}

We show well-posedness of the particle system \eqref{eq:particle_system}.
\begin{theorem}
    \label{thm:wp_pa}
    Let Assumptions~\ref{as:smooth} and \ref{as:lipschitz} hold. Then there exists a unique strong solution of the SDEs \eqref{eq:particle_system}.
\end{theorem}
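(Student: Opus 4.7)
My plan is to adapt the fixed-point strategy of Theorem~\ref{thm:wp_mf}, now applied pathwise to $N$-tuples of processes rather than to a single law. For a fixed terminal time $T > 0$, I would work on the complete metric space $\mathcal{X}_T$ of continuous $\bar{G}^N$-valued $\mathcal{F}_t^{\textbf{W}}$-adapted processes $\textbf{Y}$ on $[0,T]$ equipped with the metric $d_T(\textbf{Y}^1, \textbf{Y}^2) = (\E \sup_{s \leq T} |\textbf{Y}^1_s - \textbf{Y}^2_s|^4)^{1/4}$. Given $\textbf{Y} \in \mathcal{X}_T$, I would define $\Psi(\textbf{Y}) = \textbf{Z} = (Z^1, \ldots, Z^N)$, where each $Z^i$ is the unique strong solution of the \emph{decoupled} reflected SDE driven by $W^i$ with coefficients $b(\cdot, \cdot, \hat{\mu}_{\textbf{Y}})$ and $\sigma(\cdot, \cdot, \hat{\mu}_{\textbf{Y}})$ on $\bar{G}$. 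Since the empirical measure is treated as a given adapted input and each $Z^i$ evolves on the single domain $\bar G$ with $C^3$ boundary, existence and uniqueness of $Z^i$ reduce to the classical reflected-SDE result of \cite{lions_stochastic_1984}, exactly as for (\ref{eq:Y_def}). A fixed point of $\Psi$ is then a strong solution of (\ref{eq:particle_system}).

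The contraction step would closely mirror Lemma~\ref{lemma:contraction}. The crucial new ingredient is that the distance between the two driving empirical measures is controlled \emph{directly} by the $N$-tuple difference: coupling via the identity map on $\{1, \ldots, N\}$ gives
$$\W_4^4(\hat{\mu}_{\textbf{Y}^1_s}, \hat{\mu}_{\textbf{Y}^2_s}) \leq \frac{1}{N}\sum_{i=1}^N |Y^{1,i}_s - Y^{2,i}_s|^4 \leq |\textbf{Y}^1_s - \textbf{Y}^2_s|^4.$$
Running the It\^{o}-formula computation of Lemma~\ref{lemma:contraction} particle-by-particle with $\exp\{-2c(g(Z^{1,i}_s) + g(Z^{2,i}_s))\}|Z^{1,i}_s - Z^{2,i}_s|^2$, summing over $i$, and applying Gr\"{o}nwall's lemma would yield
$$d_T^4\big(\Psi(\textbf{Y}^1), \Psi(\textbf{Y}^2)\big) \leq K_T \int_0^T d_u^4(\textbf{Y}^1, \textbf{Y}^2)\, \d u,$$
and the $j$-fold iteration as in (\ref{eq:contraction2}) would then show that $\Psi^j$ is a contraction for $j$ large enough, giving a unique fixed point.

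I expect the main obstacle to be mostly notational rather than conceptual: bookkeeping the local-time contributions for each of the $N$ decoupled equations and applying the uniform exterior sphere condition for $G$ to each particle separately. An alternative I considered but discarded is to recast (\ref{eq:particle_system}) as a single reflected SDE on the product domain $\bar{G}^N \subset \R^{Nd}$ with Lipschitz aggregated coefficients; the difficulty there is that $\partial(\bar{G}^N)$ fails to be $C^3$ at corners where two or more particles simultaneously hit $\partial G$, so Assumption~\ref{as:smooth} does not transfer to the product domain. The particle-by-particle fixed-point route above sidesteps this issue entirely.
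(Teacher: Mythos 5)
Your proposal is correct and follows essentially the same route as the paper's proof: a fixed-point map on the space of continuous $\mathcal{F}_t^{\textbf{W}}$-adapted $\bar{G}^N$-valued processes, well-posedness of the frozen-measure decoupled equations, the identity-coupling bound $\W_4^4(\hat{\mu}_{\textbf{Y}^1_s},\hat{\mu}_{\textbf{Y}^2_s}) \le \frac{1}{N}\sum_{i}|Y^{1,i}_s-Y^{2,i}_s|^4$, the component-wise It\^{o} estimate of Lemma~\ref{lemma:contraction}, and iteration of the resulting Gr\"{o}nwall-type inequality to make some power of the map a contraction. The one small imprecision is that the decoupled equations have \emph{random} (adapted) coefficients, since $\hat{\mu}_{\textbf{Y}_t}$ depends on the random input process, so their well-posedness requires a result for reflected SDEs with random coefficients (the paper invokes \cite{bouchard_optimal_2008}) rather than the deterministic-coefficient result of \cite{lions_stochastic_1984} used for \eqref{eq:Y_def}.
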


\begin{proof}
For the sake of simplicity in presentation, we denote $X^{i,N}_s := X^{i,N}(s)$, 
$W^{i}_s := W^{i}(s)$ and $L^i_s := L^i(s)$.
Let $\mathbb{S}$ be the space of continuous and ${\cal F}_t^{\textbf{W}}$-adapted $\bar{G}^N$-valued processes. We define the map
\begin{align}
    \Phi &: \mathbb{S} \rightarrow \mathbb{S}, \\
    &Y \mapsto \bar{Y}, \nonumber
\end{align}
where $\bar{Y} = \big( \bar Y^{{1,N}^\top}, \ldots, \bar Y^{{N,N}^\top} \big)^\top$ is the solution to
\begin{align}
    \bar Y^{i,N}_t = X^{i,N}_0 + \int_0^t b(s, \bar Y^{i,N}_s, {\hat\mu}_{Y_s}) \d t + \int_0^t \sigma(s, \bar Y^{i,N}_s, {\hat\mu}_{Y_s}) \d W^i_s + \int_0^t \nu(\bar Y^{i,N}_s) \d L^{i,N}_s, \,\, i = 1, \ldots, N,
    \label{eq:SDE_ybar}
\end{align}
where $\hat\mu_{Y_t}$ is the empirical measure of an input process $Y \in \mathbb{S}$ at time $t$ (see (\ref{eq:emp_measure})).
The SDE \eqref{eq:SDE_ybar} is a reflected SDE with random coefficients. Existence and uniqueness of each component $\bar Y^i$ follow from \cite[Theorem 2.2]{bouchard_optimal_2008}. Hence, the map $\Phi$ is well-defined.

Let $Y^{(1)}, Y^{(2)} \in \mathbb{S}^N$ and let $\bar Y^{(j)} = \Phi(Y^{(j)})$, $j=1, 2$.
Applying the technique of Lemma~\ref{lemma:contraction} to each of the $N$ components of the two processes $\bar Y^{(j)}$, $j=1,2$, we obtain (cf. \eqref{eq:process_bound})
\begin{align}
    \E\sup_{u \leq t} \abs{\bar Y^{(1), i,N}_u - \bar Y^{(2), i,N}_u}^4 \leq 
    K \bigg ( \int_0^t \E \abs{\bar Y^{(1), i,N}_u - \bar Y^{(2), i,N}_u}^4 + \E \W_4^4\Big(\hat \mu_{Y^{(1)}_s}, \hat \mu_{Y^{(2)}_s}\Big) \d s \bigg),
\end{align}
for $i = 1, \ldots, N$. Applying Gr\"{o}nwall's lemma yields
\begin{equation}
    \E\sup_{u \leq t} \abs{\bar Y^{(1), i,N}_u - \bar Y^{(2), i,N}_u}^4 \leq 
    Ke^{Kt} \bigg ( \int_0^t \E \W_4^4\Big(\hat \mu_{Y^{(1)}_s}, \hat \mu_{Y^{(2)}_s}\Big) \d s \bigg).
    \label{eq:particle_bound}
\end{equation}
From the definition \eqref{eq:d_metric} of the Wasserstein distance and from \eqref{eq:emp_measure}, it can be seen that
\begin{equation}
    \E \W_4^4\Big(\hat \mu_{Y^{(1)}_s}, \hat \mu_{Y^{(2)}_s}\Big) \leq \frac{1}{N}  \sum_{i=1}^N\E \abs{Y_s^{(1), i,N} - Y^{(2), i,N}_s}^4.
    \label{eq:w_bound}
\end{equation}
Hence,
\begin{align}
    \E \sup_{u\leq t}\abs{\bar Y_u^{(1)} - \bar Y_u^{(2)}}^4 &\leq K \sum_{i=1}^N \E\sup_{u \leq t} \abs{\bar Y^{(1), i,N}_u - \bar Y^{(2), i,N}_u}^4 \\
    & \leq Ke^{Kt} \int_0^t \sum_{i=1}^N \E \abs{Y_s^{(1), i,N} - Y^{(2), i, N}_s}^4 \d s \label{eq:x1} \\
    & \leq Ke^{Kt} \int_0^t \E \sup_{u \leq s}\abs{Y_u^{(1)} - Y^{(2)}_u}^4 \d s,
\end{align}
where for the second inequality we use \eqref{eq:particle_bound} and then \eqref{eq:w_bound}. One can then show that the map $\Phi$ is a contraction as before, see \eqref{eq:contraction2}. The unique fixed-point of $\Phi$ is the solution to the particle system.
\end{proof}

\subsection{Well-posedness of the CBO models}
\label{subsec:wp_ccbo}

We now verify that the CBO mean-field SDEs and their particle approximations from Sections~\ref{subsec_2.1} and~\ref{subsec_2.2} satisfy Assumption~\ref{as:lipschitz} so that their well-posedness is established. 

For $\mu \in \mathcal{P}(\bar G)$, let us define
\begin{equation}
    \bar{X}^{\mu} := \frac{\int_{\bar G} x e^{-\alpha f(x)} \mu (\d x)}{\int_{\bar G} e^{-\alpha f(x)} \mu(\d x)}.
\end{equation}

The following lemma is proved in \cite{carrillo_analytical_2018}. 
\begin{lemma}
    \label{lm:carrillo}
    Let Assumption~\ref{as:smooth} hold and $f$ satisfy Assumption~\ref{as:objective}. There exists $K > 0$ such that for all $\mu, \bar\mu \in \mathcal{P}(\bar G)$,
    \begin{equation}
        \abs{\bar{X}^{\mu} - \bar{X}^{\bar\mu}} \leq K \mathcal{W}_2(\mu, \bar\mu).
    \end{equation}
  \end{lemma}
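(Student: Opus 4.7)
The plan is to reduce the bound on the numerator--denominator ratio to bounds on integrals of two specific Lipschitz functions, $\omega(x) := e^{-\alpha f(x)}$ and $g(x) := x\,\omega(x)$, against the two measures, and then invoke Kantorovich--Rubinstein-style reasoning to control those integral differences by $\mathcal{W}_2$.

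First I would write out the algebraic identity
\begin{equation*}
    \bar{X}^{\mu} - \bar{X}^{\bar\mu}
    = \frac{\int_{\bar G} g \, \d\mu - \int_{\bar G} g \, \d\bar\mu}{\int_{\bar G} \omega \, \d\mu}
    + \bar{X}^{\bar\mu} \cdot \frac{\int_{\bar G} \omega \, \d\bar\mu - \int_{\bar G} \omega \, \d\mu}{\int_{\bar G} \omega \, \d\mu},
\end{equation*}
so that the estimate reduces to controlling the two numerators $|\!\int g\,\d\mu - \int g\,\d\bar\mu|$ and $|\!\int \omega\,\d\mu - \int \omega\,\d\bar\mu|$, together with a uniform lower bound on the denominator and a bound on $|\bar{X}^{\bar\mu}|$. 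Since $\bar G$ is bounded (Assumption~\ref{as:smooth}) and $f$ is continuous (Assumption~\ref{as:objective}), $f$ attains its maximum $f_{\max}$ on $\bar G$, and hence $\int_{\bar G}\omega\,\d\mu \geq e^{-\alpha f_{\max}} > 0$ uniformly in $\mu$; similarly $|\bar{X}^{\bar\mu}| \leq \sup_{x\in\bar G}|x| =: R < \infty$.

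Next I would verify that $\omega$ and $g$ are Lipschitz on $\bar G$. Because $f$ is $L_f$-Lipschitz and $\omega = e^{-\alpha f}$ takes values in $[e^{-\alpha f_{\max}}, e^{-\alpha f_{\min}}]$, the mean value theorem gives $|\omega(x)-\omega(y)| \leq \alpha L_f e^{-\alpha f_{\min}}|x-y|$; then $g(x)-g(y) = x(\omega(x)-\omega(y)) + (x-y)\omega(y)$ yields $|g(x)-g(y)| \leq (R\,\alpha L_f\,e^{-\alpha f_{\min}} + e^{-\alpha f_{\min}})|x-y|$. Denote these Lipschitz constants by $C_\omega$ and $C_g$.

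Finally, for any coupling $\gamma \in \Gamma(\mu,\bar\mu)$ and any Lipschitz $h$ with constant $C_h$,
\begin{equation*}
    \Bigl|\int_{\bar G} h\,\d\mu - \int_{\bar G} h\,\d\bar\mu\Bigr|
    = \Bigl|\int (h(x)-h(y))\,\d\gamma(x,y)\Bigr|
    \leq C_h \int |x-y|\,\d\gamma
    \leq C_h \Bigl(\int |x-y|^2\,\d\gamma\Bigr)^{1/2},
\end{equation*}
where the last step uses the Cauchy--Schwarz inequality (since $\gamma$ is a probability measure). Taking the infimum over $\gamma$ yields $|\!\int h\,\d\mu - \int h\,\d\bar\mu| \leq C_h\,\mathcal{W}_2(\mu,\bar\mu)$. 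Substituting this bound with $h=g$ and $h=\omega$ into the identity above and collecting the constants $e^{\alpha f_{\max}}$, $R$, $C_g$, $C_\omega$ produces the claimed inequality with $K = e^{\alpha f_{\max}}(C_g + R\,C_\omega)$. The calculation is straightforward; the only step requiring care is keeping track of the Lipschitz constant of the vector-valued map $g(x)=x\omega(x)$ and ensuring that the Cauchy--Schwarz step genuinely upgrades the natural $\mathcal{W}_1$-bound into the $\mathcal{W}_2$-bound stated by the lemma.
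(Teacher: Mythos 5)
Your proof is correct. Note that the paper does not prove this lemma itself --- it simply cites \cite{carrillo_analytical_2018} --- and your argument is essentially the standard one from that reference: decompose the difference of ratios, bound the denominator below by $e^{-\alpha f_{\max}}$, check that $e^{-\alpha f}$ and $x e^{-\alpha f(x)}$ are Lipschitz, and upgrade the Kantorovich--Rubinstein bound to a $\mathcal{W}_2$ bound via Cauchy--Schwarz; the compactness of $\bar G$ lets you avoid the moment conditions needed in the unbounded setting of the cited work.
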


Now we show that the coefficients of the CBO model (\ref{eq:mf_cbo}) satisfy Assumption~\ref{as:objective}.
\begin{lemma}
    The coefficients of (\ref{eq:mf_cbo}),
    \begin{align}
        b(t, x, \mu) = \beta  (x - \bar{X}^{\mu}) \,\, \text{and }
        \sigma (t, x, \mu) = \sigma \textup{Diag}(x - \bar{X}^{\mu}),
    \end{align}
    are Lipschitz, uniformly in time, with respect to the space and measure arguments (i.e., they
    satisfy Assumption~\ref{as:lipschitz}).
   \end{lemma}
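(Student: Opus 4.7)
The plan is to verify Assumption~\ref{as:lipschitz} directly via the triangle inequality, using Lemma~\ref{lm:carrillo} to control the measure dependence that enters only through $\bar X^{\mu}$. Continuity in $t$ is immediate since neither coefficient depends on $t$, so only the joint space–measure Lipschitz bound needs attention.

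For the drift $b(t,x,\mu)=\beta(x-\bar X^\mu)$, I would split
\begin{equation*}
\abs{b(t,x,\mu^1)-b(t,y,\mu^2)} \leq \beta\abs{x-y}+\beta\abs{\bar X^{\mu^1}-\bar X^{\mu^2}},
\end{equation*}
and apply Lemma~\ref{lm:carrillo} to bound the second term by $\beta K\,\W_2(\mu^1,\mu^2)$. Since $\bar G$ is bounded (Assumption~\ref{as:smooth}), all Wasserstein $p$-distances are equivalent up to the domain diameter, and in particular $\W_2(\mu^1,\mu^2)\leq C\,\W_4(\mu^1,\mu^2)$ by Jensen's inequality applied to the optimal coupling. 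Combining yields the desired Lipschitz estimate for $b$.

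For the diffusion $\sigma(t,x,\mu)=\sigma\,\diagon(x-\bar X^\mu)$, I would use the same triangle-inequality decomposition
\begin{equation*}
\abs{\sigma(t,x,\mu^1)-\sigma(t,y,\mu^2)} \leq \sigma\abs{\diagon(x-y)}+\sigma\abs{\diagon(\bar X^{\mu^1}-\bar X^{\mu^2})},
\end{equation*}
where $\abs{\cdot}$ denotes the Frobenius norm being used throughout. The elementary identity $\abs{\diagon(v)}=\abs{v}$ for $v\in\R^d$ then reduces both terms to bounds already obtained for the drift, yielding $\sigma\abs{x-y}+\sigma K\,\W_2(\mu^1,\mu^2)$, and again $\W_2$ is dominated by $\W_4$ on the bounded domain. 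Summing the two coefficient estimates and taking the maximum of the resulting constants gives the single Lipschitz constant $L$ required by Assumption~\ref{as:lipschitz}.

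There is no serious obstacle here; the only substantive input is Lemma~\ref{lm:carrillo} (which transfers the nonlinear measure dependence into a Wasserstein bound thanks to Assumption~\ref{as:objective} ensuring $\inf f>0$ and $f$ Lipschitz, so that the denominator in $\bar X^\mu$ stays bounded away from zero on the compact set $\bar G$) and the bounded-domain comparison $\W_2\leq C\,\W_4$. A brief remark justifying the passage from $\W_2$ to $\W_4$ is the only point that is not completely automatic, and it is one short line using Jensen's inequality and compactness of $\bar G$.
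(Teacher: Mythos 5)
Your proof is correct and follows essentially the same route as the paper: triangle inequality, Lemma~\ref{lm:carrillo} to control $\abs{\bar X^{\mu^1}-\bar X^{\mu^2}}$ by $\W_2$, and then domination of $\W_2$ by $\W_4$ (which in fact holds with constant $1$ by Jensen's inequality on any coupling, so boundedness of $\bar G$ is not even needed for that step). The only additions you make beyond the paper's one-line argument are the explicit remark that $\abs{\diagon(v)}=\abs{v}$ in Frobenius norm and the justification of $\W_2\leq \W_4$, both of which the paper leaves implicit.
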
 
    \begin{proof}
        For $x, \bar{x} \in \R^d$ and $\mu, \bar\mu \in \mathcal{P}(\bar{G})$,
        \begin{align}
            &\abs{b(t, x, \mu) - b(t, \bar{x}, \bar\mu)} + \abs{\sigma (t, x, \mu) - \sigma (t, \bar{x}, \bar\mu)} = \beta \abs{(x - \bar{x}) + (\bar{X}^{\bar{\mu}} - \bar{X}^{\mu})} + \sigma \abs{(x - \bar{x}) + (\bar{X}^{\bar{\mu}} - \bar{X}^{\mu})} \nonumber \\
            &\qquad \leq K \big( \abs{x - \bar{x}} + \abs{\bar{X}^{\mu} - \bar{X}^{\bar{\mu}}}\big) 
            \leq K \big ( \abs{x - \bar{x}} + \mathcal{W}_2 (\mu, \bar\mu)  \big) 
             \leq K \big( \abs{x - \bar{x}} + \mathcal{W}_4(\mu, \bar\mu) \big), \nonumber
        \end{align}
        by application of Lemma~\ref{lm:carrillo}.
    \end{proof}
 
Hence, by Theorem~\ref{thm:wp_mf} we have well-posedness of the mean-field limit \eqref{eq:mf_cbo} and the particle system \eqref{particle_cbo}.
In the same way it can be verified that the coefficients of the CBO model (\ref{eq:mf_cbo_repul}) satisfy Assumption~\ref{as:lipschitz}, from which
well-posedness of the mean-field limit \eqref{eq:mf_cbo_repul} and the particle system \eqref{CBO_func} follows.

\section{Convergence of interacting particle system to the mean-field limit} \label{sec:particles2mf}

In this section we prove a theorem on propagation of chaos with the optimal convergence rate. Let $\textbf{X(t)}:=(X^1(t),\ldots,X^N (t))^{\top}$, where $X^i(t)$, $i=1,\ldots,N$, are i.i.d. particles solving the McKean-Vlasov SDEs (\ref{eq:MVsde}) which are driven by the independent Wiener processes $W^i$ that are the same as in the particle system \eqref{eq:particle_system}. 

Under Assumption~\ref{as:smooth} and~\ref{as:lipschitz}, it is not difficult to prove using an analogue of \eqref{eq:process_bound} and the well-known rate of convergence for empirical measures \cite[Theorem 1]{fournier_rate_2015} that for some constant $C>0$: 
\begin{equation}
      \max_{i=1,\ldots,N}  \sup_{t \leq T} \E \abs{X^{i, N}(t) - X^i (t)}^4\leq C
        \begin{cases}
            N^{-1/2}, \quad &d < 8, \\
            N^{-1/2} \log N, \quad &d = 8, \\
            N^{-\frac{4}{d}}, \quad &d > 8.
        \end{cases}
\end{equation}

Under the below Assumption~\ref{as:new} replacing Assumption~\ref{as:lipschitz}, we instead prove the optimal convergence rate of order $1/\sqrt{N}$. The CBO models from Sections~\ref{subsec_2.1} and \ref{subsec_2.2} satisfy Assumption~\ref{as:new}.

\begin{assumption}\label{as:new}
There exist a constant $L>0$ such that for any $x, y \in \bar G$ and any $\mu_1, \mu_2 \in {\cal P}(\bar G)$
\begin{align} 
       &|b(t, x,  \mu_1) - b(t, y,  \mu_2)| + |\sigma(t, x,  \mu_1) - \sigma(t, y,  \mu_2) | \label{eq:ass_new}\\
       &\leq L \bigg( |x-y|+ \sum_{j=1}^J
       \left |\int_G \phi_j(t,x,z)d \mu_1(z) - \int_G \phi_j(t,x,z)d \mu_2(z) \right | \bigg), \notag
   \end{align}
   where  $\phi_j(t,x,y)$, $j=1, \ldots, J$,  are  continuous  functions on $[0,T] \times \bar G \times \bar G$ and for some $C>0$
\[
\abs{\phi_j(t,x,y)-\phi_j(t,x,z)} \leq C \abs{y-z} .
\]
 Also, the coefficients $b(t, x, \mu)$ and $\sigma(t, x, \mu)$ are continuous in $t$. 
\end{assumption}

We will need the following elementary lemma. 
\begin{lemma}\label{prop_4.1}
    Let $\{A^{i}\}_{i=1}^{N}$ be a collection of independent $\mathbb{R}^{d}$-valued random variables with fourth bounded moment and $\mathbb{E}(A^{i}) = 0$, $i=1,\dots,N$. Then
    \begin{align}
        \mathbb{E}\bigg|\frac{1}{N}\sum_{i=1}^{N}A^{i}\bigg|^{4} \leq \frac{C}{N^{2}},
    \end{align}
    where $C>0$ is independent of $N$. 
\end{lemma}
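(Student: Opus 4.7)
The plan is to expand the fourth power directly and exploit the independence together with the mean-zero hypothesis to kill most of the resulting cross terms. Write $S_N = \sum_{i=1}^{N} A^i$ so that
\begin{equation*}
|S_N|^{2} = \sum_{i,j=1}^{N} \langle A^i, A^j\rangle, \qquad |S_N|^{4} = \sum_{i,j,k,l=1}^{N} \langle A^i, A^j\rangle \langle A^k, A^l\rangle,
\end{equation*}
and take expectations term by term.

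The key observation is that whenever one of the four indices $\{i,j,k,l\}$ appears exactly once, the corresponding summand factors across independent groups and the factor containing the lone index has expectation zero. Consequently only those quadruples in which the multiset $\{i,j,k,l\}$ consists of matched pairs survive. These split into (a) the diagonal case $i=j=k=l$, which contributes exactly $N$ quadruples, and (b) the case of two distinct pairs (three configurations: $i=j\neq k=l$, $i=k\neq j=l$, $i=l\neq j=k$), contributing at most $3N(N-1)$ quadruples.

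For each surviving quadruple, the Cauchy--Schwarz inequality gives
\begin{equation*}
\bigl| \mathbb{E}\bigl[\langle A^i, A^j\rangle \langle A^k, A^l\rangle\bigr]\bigr| \leq \mathbb{E}\bigl[|A^i||A^j||A^k||A^l|\bigr] \leq \max_{1 \leq m \leq N} \mathbb{E}|A^m|^{4},
\end{equation*}
by a second application of Cauchy--Schwarz (or H\"older) together with the assumed finiteness of the fourth moments. Combining the count of nonzero terms with this uniform bound yields $\mathbb{E}|S_N|^{4} \leq (N + 3N(N-1))\, M \leq 4N^{2} M$, where $M := \sup_{m} \mathbb{E}|A^m|^{4}$. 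Dividing by $N^{4}$ gives the desired estimate with $C = 4M$.

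I do not expect any real obstacle: the whole argument is a bookkeeping exercise in counting which terms in the expansion survive. The only subtlety is that one must verify carefully that an index appearing exactly once in a quadruple really does force the expectation to vanish (this uses independence of the $A^i$ plus $\mathbb{E}A^i = 0$, applied after factoring the scalar product into coordinates). Alternatively, one could invoke the Marcinkiewicz--Zygmund inequality with $p=4$ together with $(\sum_i |A^i|^{2})^{2} \leq N \sum_i |A^i|^{4}$ to reach the same bound in one line, but the elementary expansion is essentially self-contained and fits the style of the paper.
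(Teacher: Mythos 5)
Your proof is correct. The paper states this lemma without proof (calling it elementary), and your expansion of $\mathbb{E}|S_N|^4$ into quadruples $\langle A^i,A^j\rangle\langle A^k,A^l\rangle$, the observation that any index appearing exactly once forces the expectation to vanish by independence and centering (applied coordinate-wise), the count of at most $N+3N(N-1)\leq 4N^2$ surviving terms, and the uniform fourth-moment bound via Cauchy--Schwarz and H\"older together give exactly the standard argument one would supply here.
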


Let us prove the following auxiliary lemma.
\begin{lemma}\label{lem:chaos1}
Let Assumptions~\ref{as:smooth} and \ref{as:new} be satisfied. Then the following bounds hold.

For all $i=1,\dots, N$,
   \begin{align}
       & \mathbb{E}|b(t,X^{i,N}(t),\hat{\mu}_{\textbf{X}^N (t)}) - b(t, X^{i}(t), \hat{\mu}_{\textbf{X} (t)})|^{4} 
       + \mathbb{E}|\sigma(t,X^{i,N} (t),\hat{\mu}_{\textbf{X}^N (t)})
       - \sigma(t, X^{i} (t), \hat{\mu}_{\textbf{X} (t)})|^{4} \nonumber \\ 
       &\leq C \bigg(\mathbb{E}|X^{i,N} (t)- X^{i} (t)|^{4} + \frac{1}{N}\sum_{j=1}^{N}\mathbb{E}|X^{j,N} (t) - X^{j} (t)|^{4}\bigg),
       \label{eq:lemchaos1}
   \end{align}
where $C >0$ is independent of $N$.
 
 For all $i=1,\dots, N$, 
   \begin{align}
       \mathbb{E}|b(t, X^{i} (t), \hat{\mu}_{\textbf{X} (t)}) - b(t,X^{i} (t),  \mathcal{L}^{X}_{t})|^{4} 
       + \mathbb{E}|\sigma(t, X^{i} (t), \hat{\mu}_{\textbf{X} (t)}) - \sigma(t,X^{i} (t),  \mathcal{L}^{X}_{t})|^{4} 
       \leq  \frac{C}{N^{2}}, 
       \label{eq:lemchaos2}
   \end{align}  
where $C >0$ is independent of $N$.
\end{lemma}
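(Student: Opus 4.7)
The plan is to establish the two bounds by directly invoking Assumption~\ref{as:new} and handling the two cases separately. Both estimates boil down to controlling differences of the form $\int \phi_j(t,x,z) \, d(\mu_1 - \mu_2)(z)$; the drift and diffusion contributions are treated identically, so I focus on $b$.

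For \eqref{eq:lemchaos1}, I would apply \eqref{eq:ass_new} with $x = X^{i,N}(t)$, $y = X^i(t)$, $\mu_1 = \hat\mu_{\textbf{X}^N(t)}$, and $\mu_2 = \hat\mu_{\textbf{X}(t)}$. Since integrating against an empirical measure is averaging, the measure-difference terms become
\begin{equation*}
\frac{1}{N}\sum_{k=1}^N \bigl[\phi_j(t, X^{i,N}(t), X^{k,N}(t)) - \phi_j(t, X^{i,N}(t), X^k(t))\bigr],
\end{equation*}
which, by the Lipschitz property of $\phi_j$ in its third argument, is bounded in absolute value by $\frac{C}{N}\sum_{k=1}^N |X^{k,N}(t) - X^k(t)|$. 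Raising the resulting pointwise estimate to the fourth power, applying Jensen's inequality to push the fourth power inside the average, and taking expectation produces \eqref{eq:lemchaos1}.

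For \eqref{eq:lemchaos2}, applying \eqref{eq:ass_new} with $x=y=X^i(t)$, $\mu_1 = \hat\mu_{\textbf{X}(t)}$ and $\mu_2 = \mathcal{L}^X_t$ reduces the task to estimating, for each $j$,
\begin{equation*}
S_j := \frac{1}{N}\sum_{k=1}^N \phi_j(t, X^i(t), X^k(t)) - \int_{\bar G} \phi_j(t, X^i(t), z) \, d\mathcal{L}^X_t(z).
\end{equation*}
I would condition on $X^i(t)$. For $k \neq i$, since $X^k(t)$ is independent of $X^i(t)$ with law $\mathcal{L}^X_t$, the variables $A^k := \phi_j(t, X^i(t), X^k(t)) - \int \phi_j(t, X^i(t), z) \, d\mathcal{L}^X_t(z)$ are conditionally independent given $X^i(t)$, conditionally mean-zero, and bounded (because $\phi_j$ is continuous on the compact set $[0,T]\times \bar G \times \bar G$).

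The main subtlety is that the $k=i$ summand $A^i$ is \emph{not} conditionally mean-zero, so I split $S_j = \frac{1}{N}\sum_{k\neq i} A^k + \frac{1}{N} A^i$. Boundedness of $\phi_j$ gives $|A^i|\leq C$, hence $(A^i/N)^4 = O(N^{-4})$, which is absorbed into the $O(N^{-2})$ target. For the main part, Lemma~\ref{prop_4.1} applied conditionally on $X^i(t)$ to the $N-1$ independent, bounded, zero-mean random variables $A^k$, $k\neq i$, yields $\mathbb{E}\bigl[\bigl|\frac{1}{N}\sum_{k\neq i} A^k\bigr|^4 \bigm| X^i(t)\bigr] \leq C/N^2$. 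Taking total expectation, summing over $j=1,\dots,J$, and combining with the $O(N^{-4})$ contribution from $A^i$ yields \eqref{eq:lemchaos2}. The only non-routine ingredient is the isolation of the $k=i$ term that breaks the i.i.d.\ structure; everything else is standard use of Assumption~\ref{as:new} and Lemma~\ref{prop_4.1}.
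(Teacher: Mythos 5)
Your proposal is correct and follows essentially the same route as the paper: the first bound via Assumption~\ref{as:new}, the Lipschitz property of $\phi_j$ in its last argument, and Jensen's inequality; the second bound by isolating the diagonal $k=i$ term (the paper does this by passing from $\frac{1}{N}\sum_{l=1}^N$ to $\frac{1}{N-1}\sum_{l\neq i}$, which is the same bookkeeping) and then applying a conditional version of Lemma~\ref{prop_4.1} to the remaining independent, bounded, conditionally mean-zero variables. No gaps.
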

\begin{proof}
Thanks to Assumption~\ref{as:new}, we obtain
\begin{align*}
       &\mathbb{E}|b(t,X^{i,N}(t),\hat{\mu}_{\textbf{X}^N (t)}) - b(t, X^{i}(t), \hat{\mu}_{\textbf{X} (t)})|^{4} 
       + \mathbb{E}|\sigma(t,X^{i,N} (t),\hat{\mu}_{\textbf{X}^N (t)})
       - \sigma(t, X^{i} (t), \hat{\mu}_{\textbf{X} (t)})|^{4} \nonumber \\ 
       &\leq C \E|X^{i,N}(t)-X^{i}(t)|^4+ C \sum_{j=1}^J \E
       \left |\frac{1}{N}\sum_{l=1}^N \phi_j(t,X^{i}(t),X^{l,N}(t))  
       - \frac{1}{N}\sum_{l=1}^N \phi_j(t,X^{i}(t),X^{l}(t)) \right |^4 \\
       & \leq C \E|X^{i,N}(t)-X^{i}(t)|^4+ C \frac{1}{N}\sum_{l=1}^{N}\mathbb{E}|X^{l,N} (t) - X^{l} (t)|^{4},
   \end{align*}
hence we arrived at (\ref{eq:lemchaos1}).

Thanks to Assumption~\ref{as:new}, we get
\begin{align*}
& \mathbb{E}|b(t, X^{i} (t), \hat{\mu}_{\textbf{X} (t)}) - b(t,X^{i} (t),  \mathcal{L}^{X}_{t})|^{4} 
+ \mathbb{E}|\sigma(t, X^{i} (t), \hat{\mu}_{\textbf{X} (t)}) - \sigma(t,X^{i} (t),  \mathcal{L}^{X}_{t})|^{4} \\
 & \leq C \sum_{j=1}^J \E
       \left |\frac{1}{N} \sum_{l=1}^N \phi_j(t,X^{i} (t),X^{l} (t)) 
       - \int_G \phi_j(t,X^{i} (t),y)d \mathcal{L}^{X}_{t}(y) \right |^4 \\
  & \leq C \sum_{j=1}^J \mathbb{E}\bigg| \frac{1}{N}\sum_{l= 1}^{N}\phi_j(t,X^{i}(t), X^{l}(t))
    - \frac{1}{N-1}\sum_{l= 1, l\neq i}^{N}\phi_j(t,X^{i}(t), X^{l}(t))\bigg|^{4} \nonumber \\ 
    &  +  C \sum_{j=1}^J \mathbb{E}\bigg| \frac{1}{N-1}\sum_{l= 1, l\neq i}^{N}\phi_j(t,X^{i}(t), X^{l}(t))
    - \int_{G}\phi_j(t,X^{i}(t), y)\mathcal{L}^{X}_{t}(\d y)\bigg|^{4} \nonumber \\ 
    & \leq \frac{C}{N^{4}} +   
    C \sum_{j=1}^J\mathbb{E}\Bigg(\mathbb{E}\bigg| \frac{1}{N-1}\sum_{l= 1, l\neq i}^{N}\phi_j(t,x, X^{l}(t))
    - \int_{G}\phi_j(t,x, y)\mathcal{L}^{X}_{t}(\d y)\bigg|^{4}\Bigg| X^{i}(t) = x\Bigg).
\end{align*}
Note that for each $j$
\begin{align}
D_j^{l}(x) := \phi_j(t,x, X^{l}(t))- \int_{G}\phi_j(t,x, y)\mathcal{L}^{X}_{t}(\d y), \, l=1,\ldots, N, 
\end{align}
are independent random variables with bounded moments and $\E D^{l}_j(x)=0$.
Then, applying a conditional version of Lemma~\ref{prop_4.1}, we arrive at (\ref{eq:lemchaos2}).
\end{proof}

Now we proceed to the propagation of chaos theorem.

\begin{theorem}\label{thm:chaos}
Let Assumptions~\ref{as:smooth} and \ref{as:new} be satisfied. Then the following bound holds:
     \begin{align}\label{eq:chaosthm}
     \Big(\max_{i=1,\dots,N}\mathbb{E}\sup_{u \leq t}|X^{i,N}(u) - X^{i}(u)|^{4}\Big)^{1/4} \leq C \frac{1}{N^{1/2}},
     \end{align}
where $X^{i,N}$ represents the $i$-th particle among the interacting particles driven by (\ref{eq:particle_system}), $X^{i}$ is independent and identical copy of (\ref{eq:MVsde}), and $C >0$ is a constant independent of $N$. 
\end{theorem}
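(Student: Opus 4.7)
The plan is to couple each particle $X^{i,N}$ with its i.i.d.\ copy $X^i$ by driving them with the common Brownian motion $W^i$ and the common initial datum, apply a pairwise version of the exponential-weight It\^{o} argument used in Lemma~\ref{lemma:contraction} to annihilate the two local-time contributions, and then use Lemma~\ref{lem:chaos1} to split the coefficient discrepancy into a part controlled by the process-difference itself and a part of order $N^{-2}$. A Gr\"{o}nwall-type closure then yields the optimal rate.

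\emph{Step 1 (Pairwise It\^{o} with the exponential weight).} For fixed $i$, set $Z^i_s := X^{i,N}(s)-X^i(s)$ and let $g$ be a smooth bounded extension to $\R^d$ of the distance $d(\cdot,\partial G)$, so that $\nabla g = \nu$ on $\partial G$. Put $\kappa^i(s) := \exp\{-2c(g(X^{i,N}(s))+g(X^i(s)))\}$ with $c>0$ as in \eqref{eq:ext_sphere_condition}. The It\^{o} expansion of $\kappa^i(s)|Z^i_s|^2$ reproduces the calculation performed in Lemma~\ref{lemma:contraction}: the boundary integrals arising from $dL^{i,N}$ and $dL^i$ combine with the $-2c\,dL$ contributions produced by differentiating the exponential into expressions that are non-positive by \eqref{eq:ext_sphere_condition} applied on $\{X^{i,N}(s)\in\partial G\}$ and $\{X^{i}(s)\in\partial G\}$, respectively.

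\emph{Step 2 (Square, sup, Doob).} Squaring, taking $\sup_{u\leq t}$, taking expectation, and using Doob's maximal inequality together with the It\^{o} isometry on the martingale parts (following the passage from \eqref{eq:process_bound0} to \eqref{eq:process_bound}), we arrive at
\begin{align*}
\E\sup_{u\leq t}|Z^i_u|^4 \leq C\int_0^t \E|Z^i_s|^4\,ds + C\int_0^t \bigl(\E|\Delta b^i_s|^4 + \E|\Delta\sigma^i_s|^4\bigr)\,ds,
\end{align*}
where $\Delta b^i_s := b(s,X^{i,N}(s),\hat\mu_{\textbf{X}^N(s)}) - b(s,X^i(s),\mathcal{L}_{X(s)})$ and analogously for $\Delta\sigma^i_s$. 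Then decompose
\begin{align*}
\Delta b^i_s = \bigl[b(s,X^{i,N}(s),\hat\mu_{\textbf{X}^N(s)}) - b(s,X^i(s),\hat\mu_{\textbf{X}(s)})\bigr] + \bigl[b(s,X^i(s),\hat\mu_{\textbf{X}(s)}) - b(s,X^i(s),\mathcal{L}_{X(s)})\bigr],
\end{align*}
and similarly for $\Delta\sigma^i_s$. Lemma~\ref{lem:chaos1} now delivers, for each $i$,
\begin{align*}
\E|\Delta b^i_s|^4 + \E|\Delta \sigma^i_s|^4 \leq C\Bigl(\E|Z^i_s|^4 + \frac{1}{N}\sum_{j=1}^N \E|Z^j_s|^4 + \frac{1}{N^2}\Bigr).
\end{align*}

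\emph{Step 3 (Closure).} Let $\psi(t) := \max_{i=1,\dots,N}\E\sup_{u\leq t}|Z^i_u|^4$. Since the preceding bound is uniform in $i$ and the averaged term is itself dominated by $\psi(s)$, taking the maximum over $i$ on the left-hand side yields
\begin{align*}
\psi(t) \leq C\int_0^t \psi(s)\,ds + \frac{Ct}{N^2}.
\end{align*}
Gr\"{o}nwall's lemma produces $\psi(T) \leq Ce^{CT}/N^2$, and extracting the fourth root gives \eqref{eq:chaosthm}.

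\emph{Main obstacle.} The optimal rate $N^{-1/2}$ hinges entirely on the $C/N^2$ bound of Lemma~\ref{lem:chaos1}(ii), which exploits the fact that conditioning on $X^i(s)$ leaves the modified sum $\frac{1}{N-1}\sum_{\ell\neq i}\phi_j(t,X^i(s),X^\ell(s))$ as a centred mean of independent terms, to which Lemma~\ref{prop_4.1} applies. Once the exponential-weight It\^{o} device from Lemma~\ref{lemma:contraction} is in place, the pairwise extension is bookkeeping; the only point requiring a small check is that each of the two boundary contributions is individually non-positive, which follows from \eqref{eq:ext_sphere_condition} applied in turn to $(X^{i,N}(s),X^i(s))$ and $(X^i(s),X^{i,N}(s))$.
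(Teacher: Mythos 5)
Your proposal is correct and follows essentially the same route as the paper: the same synchronous coupling through the common $W^i$, the same exponential-weight It\^{o} argument borrowed from Lemma~\ref{lemma:contraction} to dispose of the two local-time terms via \eqref{eq:ext_sphere_condition}, the same two-term decomposition of the coefficient discrepancy handled by Lemma~\ref{lem:chaos1}, and the same Gr\"{o}nwall closure on $\max_i\E\sup_{u\leq t}|X^{i,N}(u)-X^i(u)|^4$.
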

\begin{proof}
For brevity, we write $X^{i,N}_{t} := X^{i,N}(t)$ and $X^{i}_{t} := X^{i}(t)$. Analogously how (\ref{eq:process_bound0}) was derived, we can get
\begin{align}
            \E\sup_{u \leq t} \abs{X^{i,N}_{u} - X^{i}_{u}}^4 & \leq K \E \bigg ( \int_0^t \abs{X^{i,N}_{s} - X^{i}_{s}}^2 
            \abs{b(s,X^{i,N}_{s},\hat{\mu}_{\textbf{X}^N_s}) - b(s,X^{i}_{s},{\cal L}_s^X)}^2 \nonumber\\
             & \qquad + \abs{X^{i,N}_{s} - X^{i}_{s}}^2 
            \abs{\sigma (s,X^{i,N}_{s},\hat{\mu}_{\textbf{X}^N_s}) - \sigma (s,X^{i}_{s},{\cal L}_s^X}^2 \nonumber\\
            & \qquad + \abs{X^{i,N}_{s} - X^{i}_{s}}^4 + \abs{\sigma (s,X^{i,N}_{s},\hat{\mu}_{\textbf{X}^N_s}) 
            - \sigma (s,X^{i}_{s},{\cal L}_s^X) }^4 \d s \bigg). \label{eq:process_bound0NEW}
        \end{align}
We have
\begin{align}
    \abs{b(s,X^{i,N}_{s},\hat{\mu}_{\textbf{X}^N_s}) - b(s,X^{i}_{s},{\cal L}_s^X)}^4 
    &\leq 4( |b(s,X^{i,N}_{s},\hat{\mu}_{\textbf{X}^N_s}) - b(s, X^{i}_s, \hat{\mu}_{\textbf{X}_s})|^{4} \nonumber \\
    &+ | b(s, X^{i}_s, \hat{\mu}_{\textbf{X}_s}) - b(s,X^{i}_{s},{\cal L}_s^X) |^{4}), \label{eqn_5.9} \\ |\sigma(s,X^{i,N}_{s},\hat{\mu}_{\textbf{X}^N_s}) - \sigma(s,X^{i}_{s},{\cal L}_s^X)|^{4} 
    &\leq  4(|\sigma(s,X^{i,N}_{s},\hat{\mu}_{\textbf{X}^N_s}) - \sigma(s, X^{i}_s, \hat{\mu}_{\textbf{X}_s})|^{4} \nonumber \\
    &+ | \sigma(s, X^{i}_s, \hat{\mu}_{\textbf{X}_s}) - \sigma(s,X^{i}_{s},{\cal L}_s^X) |^{4}). \label{eqn_5.10} 
\end{align}
Using Young's inequality, (\ref{eqn_5.9}), (\ref{eqn_5.10}) and Lemma~\ref{lem:chaos1}, we obtain from (\ref{eq:process_bound0NEW}):
 \begin{align*}
     \E\sup_{u \leq t} \abs{X^{i,N}_{u} - X^{i}_{u}}^4 \leq C \int_{0}^{t}\mathbb{E}|  X^{i,N}(s) - X^{i}(s)|^{4} + \frac{1}{N}\sum\limits_{j=1}^{N}\mathbb{E}|X^{i,N}(s) - X^{i}(s)|^{4}ds + C\frac{1}{N^{2}}.
 \end{align*}
Hence
\begin{align*}
    \max_{i =1, \dots, N}\E\sup_{u \leq t} \abs{X^{i,N}_{u} - X^{i}_{u}}^4  
    \leq C\int_{0}^{t} \max_{i =1, \dots, N}\mathbb{E} \sup_{u \leq s}|X^{i,N}(u) - X^{i}(u)|^{4} ds + C\frac{1}{N^{2}},
\end{align*}
which on applying Gronwall's lemma gives the desired result. 
\end{proof}

It is not difficult to verify that the CBO models from Sections~\ref{subsec_2.1} and \ref{subsec_2.2} satisfy Assumption~\ref{as:new}.
Consequently, we get the following corollary.
\begin{corollary}
The particle systems (\ref{particle_cbo}) and (\ref{CBO_func}) converge to their mean-field limits (\ref{eq:mf_cbo}) and (\ref{eq:mf_cbo_repul}), respectively: for them the inequality (\ref{eq:chaosthm}) holds.    
\end{corollary}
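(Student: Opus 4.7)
The plan is to reduce the corollary to an application of Theorem~\ref{thm:chaos} by verifying that Assumption~\ref{as:new} is satisfied by both CBO models. Since well-posedness of the particle systems and mean-field limits is already established in Section~\ref{subsec:wp_ccbo} and Assumption~\ref{as:new} trivially implies Assumption~\ref{as:lipschitz} via Lemma~\ref{lm:carrillo}, all I really need to do is produce the kernels $\phi_j(t,x,y)$ required by \eqref{eq:ass_new}.

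For the basic reflected CBO model \eqref{eq:mf_cbo}, the measure-dependence in both $b$ and $\sigma$ enters only through $\bar{X}^\mu = A(\mu)/B(\mu)$ with $A(\mu) = \int_{\bar G} y e^{-\alpha f(y)} d\mu(y)$ and $B(\mu) = \int_{\bar G} e^{-\alpha f(y)} d\mu(y)$. Using the identity
\[
\bar{X}^{\mu^1} - \bar{X}^{\mu^2} = \frac{A(\mu^1) - A(\mu^2)}{B(\mu^1)} + \frac{A(\mu^2)}{B(\mu^1) B(\mu^2)}\bigl(B(\mu^2) - B(\mu^1)\bigr),
\]
together with the uniform bounds $B(\mu) \geq e^{-\alpha f_{\max}} > 0$ and $|A(\mu^2)/B(\mu^2)| = |\bar X^{\mu^2}| \leq \sup_{y\in \bar G}|y|$, I get a constant $C$ such that
\[
|\bar{X}^{\mu^1} - \bar{X}^{\mu^2}| \leq C\Bigl(\,\bigl|\textstyle\int \phi_1(y) \,d\mu^1 - \int \phi_1(y) \,d\mu^2\bigr| + \bigl|\int \phi_2(y) \,d\mu^1 - \int \phi_2(y) \,d\mu^2\bigr|\Bigr),
\]
with $\phi_1(y) = y e^{-\alpha f(y)}$ and $\phi_2(y) = e^{-\alpha f(y)}$ (componentwise for $\phi_1$). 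Both kernels are Lipschitz on $\bar G$ because $f$ is Lipschitz (Assumption~\ref{as:objective}) and bounded, and $\bar G$ is compact. Combined with the trivial Lipschitz estimate $|b(t,x,\mu) - b(t,y,\bar\mu)| \leq \beta(|x-y| + |\bar X^\mu - \bar X^{\bar\mu}|)$ and the analogous estimate for $\sigma$ using that $|\diagon(u) - \diagon(v)| \leq |u-v|$, this establishes \eqref{eq:ass_new} for model \eqref{eq:mf_cbo}.

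For the model with repelling forces \eqref{eq:mf_cbo_repul}, the additional drift term is $\lambda(t) \int_{\bar G} \psi(x,y) \,d\mu(y)$ with $\psi(x,y) = (x-y)\exp(-|x-y|^2/2)$, which is smooth and hence Lipschitz in $y$ on the compact set $\bar G$, uniformly in $x \in \bar G$; taking $\phi_3(t,x,y) = \lambda(t)\psi(x,y)$ (with $\lambda$ bounded and continuous) gives another admissible kernel. Together with the $\phi_1,\phi_2$ already constructed, this verifies Assumption~\ref{as:new} for the repelling CBO model as well. Theorem~\ref{thm:chaos} then immediately yields \eqref{eq:chaosthm} for both particle systems.

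The only mildly delicate point is checking the decomposition of $\bar{X}^\mu$ cleanly, since Assumption~\ref{as:new} demands a linear (in $\mu$) upper bound on the measure-dependence, while $\bar X^\mu$ is nonlinear in $\mu$; using the uniform lower bound on $B(\mu)$ and the boundedness of $\bar G$ resolves this. Everything else is a direct substitution into Theorem~\ref{thm:chaos}.
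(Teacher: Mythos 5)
Your proposal is correct and follows exactly the route the paper intends: the paper itself offers no written verification (it simply asserts that the CBO models satisfy Assumption~\ref{as:new} and invokes Theorem~\ref{thm:chaos}), and your choice of kernels $\phi_1(y)=y e^{-\alpha f(y)}$, $\phi_2(y)=e^{-\alpha f(y)}$, together with the decomposition of $\bar X^{\mu^1}-\bar X^{\mu^2}$ using the lower bound $B(\mu)\ge e^{-\alpha f_{\max}}$ and the extra kernel $\lambda(t)(x-y)e^{-|x-y|^2/2}$ for the repelling model, is precisely the omitted verification. The only minor imprecision is attributing the implication from Assumption~\ref{as:new} to Assumption~\ref{as:lipschitz} to Lemma~\ref{lm:carrillo} (it actually follows from $|\int\phi\,d\mu^1-\int\phi\,d\mu^2|\le C\,\mathcal{W}_1(\mu^1,\mu^2)\le C\,\mathcal{W}_4(\mu^1,\mu^2)$ for Lipschitz $\phi$), but this remark is inessential since well-posedness is established independently in Section~\ref{subsec:wp_ccbo}.
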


\section{Long-time behavior of reflected mean-field models}\label{sec:longtime}

In Section~\ref{subsec_reflection_coupling_conv}, we use the coupling technique to study long time behavior of the reflected McKean-Vlasov SDEs (\ref{eq:MVsde}) with $\sigma \equiv I$. In Section~\ref{sec:globminim}, we show convergence of CBO models from Section~\ref{sec:rcbo} towards the global minimum.

\subsection{Convergence of reflected mean-field  SDEs with additive noise} \label{subsec_reflection_coupling_conv}

Consider the reflected McKean-Vlasov SDEs (\ref{eq:MVsde}) with $\sigma \equiv I$:
\begin{align}
                dX(t) = b(X(t), \mathcal{L}_{X(t)}) \d t +  \d W(t) +  \nu(X(t)) \d L(t), \quad X(0) \in \bar{G}.  
                \label{eq:mvsde5}
\end{align}
Here, we provide a quantitative non-asymptotic bound for the exponential convergence of $\mathcal{L}_{X(t)}$, i.e. law of $X(t)$  to the invariant measure $\tilde{\mu}$ defined on $\bar{G}$.   

Recall $c := 1/(2R_0)$ where $R_0$ is the radius of uniform exterior sphere (see (\ref{eq:ext_sphere_condition})). We denote 
\begin{align}
     R_1 =  \sup_{x, y \in \bar{G}}|x-y| \,\, \text{ and }  \,\, \hat{\lambda}_c = 1 - 2 c R_1.
\end{align}
 Note that for a convex domain $c = 0$.

\begin{assumption} \label{ass:sec51}
    The constant $\hat{\lambda}_c>0$.
\end{assumption}

This assumption allows for the domain $G$ to be non-convex but it is weaker than the uniform exterior sphere condition (\ref{eq:ext_sphere_condition}). 

As in the proof of Lemma~\ref{lemma:contraction}, $d(\cdot, \partial G)$ is the distance to the boundary of $G$, defined on some neighborhood of $\partial G$ within $G$ and the function $g(\cdot)$ is a smooth, bounded extension of $d$ to the whole space so that 
\begin{align}
    \max_{x \in \bar G} |\nabla g (x)| =1.
\end{align}
 We also use the notation
 \begin{equation}
     g_{\min} = \min_{x \in \bar{G}} g(x) \quad \text{and}  \quad  g_{\max} = \max_{x \in \bar{G}} g(x).
 \end{equation}

We employ reflection coupling (see \cite{lindvall1986coupling, chen1989coupling, eberle2016reflection}) to establish $1$-Wasserstein bound between the law of $X(t)$ and its invariant measure. 
Using reflection coupling, the convex domain case was considered  in \cite[Remark~3]{eberle2016reflection} for additive noise and in \cite{wang2023exponential} for multiplicative noise. 
 
\begin{theorem}
    Let Assumption~\ref{as:smooth}, \ref{as:new} and~\ref{ass:sec51} hold.  Then the following bound for the solution of (\ref{eq:mvsde5}) holds:
    \begin{align}
      \mathcal{W}_1(\mathcal{L}_{X(t)}, \tilde{\mu}) \leq C_{\textrm{mf }} \mathcal{W}_1(\mathcal{L}_{X(0)}, \tilde{\mu}) e^{-C_0 t},
    \end{align}
    where 
    \begin{align}
        C_{\text{mf}} & =   2\exp\bigg(  \frac{K_1}{K_3} \frac{R_1^2}{2} + \frac{K_2}{K_3} R_1\bigg)  \exp(2 c g_{\max}) , \\
       C_0 &=  \frac{K_3}{2 \int_0^{R_1} \int_0^r\exp\bigg(  \frac{K_1}{K_3} \frac{r^2-s^2}{2} + \frac{K_2}{K_3} (r-s) \bigg) ds dr} , \notag
    \end{align}
with
\begin{align}
    K_1 &=  L + c \sup_{x \in \bar{G}, \mu \in \mathcal{P}(\bar{G})}( 2 |b(x, \mu)|  + |\Delta g(x)|  ) + 2c^2, \\ 
    K_2  &=  4 c e^{-2 c g_{\min}} + LR_1 e^{-2 c g_{\min}}, \quad 
    K_3 = 2 \hat{\lambda}_c e^{-4c g_{\max}}, \notag
\end{align}
and $L$ is from (\ref{eq:ass_new}).
\end{theorem}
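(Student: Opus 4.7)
The plan is to adapt the reflection coupling method of Eberle \cite{eberle2016reflection} to the reflected McKean-Vlasov setting, using the exponential multiplier introduced in the proof of Lemma~\ref{lemma:contraction} to absorb the local-time terms in a possibly non-convex domain. First, I would introduce a stationary copy $Y(t)$ of (\ref{eq:mvsde5}) with $\mathcal{L}_{Y(t)} \equiv \tilde\mu$ for all $t \geq 0$, coupled to $X$ by taking $Y$ to be driven by
\[
\tilde W(t) = W(t) - 2\int_0^t e(s)\,e(s)^\top dW(s), \qquad e(s) = \frac{X(s)-Y(s)}{|X(s)-Y(s)|},
\]
away from the diagonal, and synchronously after the coupling time. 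A standard Ito calculation then gives, for $r(t) := |X(t)-Y(t)|$, a one-dimensional SDE with drift $e^\top(b(X,\mathcal{L}_{X(t)}) - b(Y,\tilde\mu))$, doubled diffusion $d\langle r\rangle_t = 4\,dt$, and local-time contributions from both particles.

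The second step is to apply Ito's formula to the test functional $V(t) := \kappa(t)\,f(r(t))$, where $\kappa(t) := \exp\{-2c(g(X(t))+g(Y(t)))\}$ and $f \in C^2([0,R_1])$ is a smooth, concave, increasing function with $f(0) = 0$ still to be chosen. The multiplier $\kappa$ turns the two local-time contributions into non-positive quantities via the uniform exterior sphere condition (\ref{eq:ext_sphere_condition}), exactly as in Lemma~\ref{lemma:contraction}. The drift difference is bounded via Assumption~\ref{as:new} by $L[r(t) + \mathcal{W}_1(\mathcal{L}_{X(t)},\tilde\mu)]$; the quadratic term $2\kappa f''(r)$ is sign-definite by concavity of $f$; and the cross terms arising from Ito on $\kappa$ and on the product $\kappa f(r)$ are controlled uniformly via boundedness of $\bar G$, smoothness of $g$, and $\sigma \equiv I$. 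Grouping these constants as $K_1, K_2, K_3$ in the statement (so that $K_3 > 0$ precisely under Assumption~\ref{ass:sec51}) and using $\kappa \in [e^{-4cg_{\max}}, 1]$, taking expectation yields
\[
\frac{d}{dt}\E V(t) \leq -e^{-4cg_{\max}}\E\bigl[K_3\,f'(r(t)) - (K_1 r(t)+K_2)f''(r(t))\bigr] + L\,\mathcal{W}_1(\mathcal{L}_{X(t)},\tilde\mu)\,\E f'(r(t)).
\]

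For the third step I would follow Eberle's construction, taking $f(r) = \int_0^r \varphi(u)\chi(u)\,du$ with $\varphi(r) := \exp\bigl(-(K_1 r^2/2 + K_2 r)/K_3\bigr)$ and $\chi$ an affine decreasing factor, chosen so that $f$ is concave and increasing on $[0,R_1]$ and satisfies the pointwise inequality $K_3 f'(r) - (K_1 r + K_2) f''(r) \geq 2C_0\,f(r)$. A direct quadrature shows that the largest such $C_0$ is precisely the one given in the statement, with the double integral appearing in its denominator being $\int_0^{R_1}\!\varphi(r)^{-1}\!\int_0^r \varphi(s)\,ds\,dr$ in disguise. The mean-field term is then closed self-consistently through the coupling inequality $\mathcal{W}_1(\mathcal{L}_{X(t)},\tilde\mu) \leq \E r(t)$ together with the comparability between $r$ and $f(r)$ on $[0,R_1]$, whose constants are controlled by $\exp((K_1/K_3)R_1^2/2 + (K_2/K_3)R_1)$. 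Gronwall's inequality then gives $\E V(t) \leq \E V(0) e^{-C_0 t}$, and translating back through $\kappa \geq e^{-4cg_{\max}}$, $f(r)$ comparable to $r$, and optimisation over couplings of $(X(0),Y(0))$ produces the stated bound with $C_{\text{mf}}$ collecting all multiplicative constants.

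The principal obstacle is the self-consistent absorption of the mean-field Wasserstein term $L\,\mathcal{W}_1(\mathcal{L}_{X(t)},\tilde\mu)\,\E f'(r(t))$ — a difficulty absent in the classical single-marginal reflection coupling. It forces the comparability constants between $r$ and $f(r)$ to be tracked sharply, which is exactly why $C_{\text{mf}}$ carries the exponential factor $\exp((K_1/K_3)R_1^2/2 + (K_2/K_3)R_1)$, and why the Lipschitz constant $L$ is absorbed into the definition of $K_1$ rather than producing a smallness condition.
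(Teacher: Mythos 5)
Your overall skeleton -- reflection coupling of $X$ with a stationary copy $Y$ started from $\tilde\mu$, an exponential weight in $c\,(g(X)+g(Y))$ to make the two local-time contributions non-positive via the exterior sphere condition \eqref{eq:ext_sphere_condition}, a concave distance function built from $\varphi(r)=\exp(-(K_1r^2/2+K_2r)/K_3)$ in Eberle's manner, and Gr\"onwall -- is the same as the paper's. But there is a genuine gap in how you treat the measure-dependence of the drift, and it is fatal to recovering the theorem as stated. You bound the drift difference by $L[r(t)+\mathcal{W}_1(\mathcal{L}_{X(t)},\tilde\mu)]$ and propose to close the $\mathcal{W}_1$ term ``self-consistently'' via $\mathcal{W}_1(\mathcal{L}_{X(t)},\tilde\mu)\le \E r(t)$ and the comparability of $r$ and $f(r)$. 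Follow that through: the perturbation becomes $L\,\E r(t)\,\E f'(r(t))\le 2L e^{4cg_{\max}}\varphi(R_1)^{-1}\,\E V(t)$, and Gr\"onwall gives decay only if $2Le^{4cg_{\max}}\varphi(R_1)^{-1}<C_0$. Since $\varphi(R_1)^{-1}=\exp\big(\tfrac{K_1}{K_3}\tfrac{R_1^2}{2}+\tfrac{K_2}{K_3}R_1\big)$ is exactly the huge factor appearing in $C_{\text{mf}}$ while $C_0\le K_3/R_1^2$, this is a severe smallness condition on $L$ -- and the theorem imposes none. So your route, taken literally, does not prove the stated result. Your closing remark that $L$ is ``absorbed into $K_1$ rather than producing a smallness condition'' is contradicted by the theorem itself: $L$ also appears in $K_2$ through the term $LR_1e^{-2cg_{\min}}$, which can only come from the crude bound $\E|X(u)-Y(u)|\le R_1$. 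That is what the paper does: the measure term is bounded uniformly by $LR_1$, folded into the \emph{constant} part $K_2$ of the drift of the coupled distance, and then neutralized not by self-consistency but by the strict concavity of $\psi$ together with the non-degenerate quadratic variation of $r_t$ (the inequality \eqref{rcbo_equation_5.8} holds down to $r=0$ precisely because $\psi''(0)=-K_2/K_3<0$ cancels $K_2\psi'(0)$). This is the standard Eberle mechanism for drift perturbations that do not vanish at $r=0$, and it is the step your sketch is missing.

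Two smaller points. First, your displayed differential inequality has the roles of $f'$ and $f''$ interchanged: It\^o gives the expansive drift $(K_1r+K_2)$ multiplying $f'$ and the quadratic-variation constant $K_3$ multiplying $f''$, so the pointwise requirement is $(K_1r+K_2)f'(r)+K_3f''(r)\le -2C_0f(r)$; as written, your bracket is a sum of two positive terms and the inequality $\ge 2C_0f$ would be satisfied by $f(r)=r$ with a far better rate, which signals the sign muddle. Second, you never say where Assumption~\ref{ass:sec51} ($\hat\lambda_c=1-2cR_1>0$) enters. In the paper it is needed for the lower bound $d[r,r]_t\ge 4e^{-2cZ_t}\hat\lambda_c\,dt$ on the quadratic variation of the \emph{weighted} distance $r_t=e^{-cZ_t}|X_t-Y_t|$ (the cross terms between $2E_t^\top dW_t$ and the $\nabla g$ contributions can degrade it), and it is the source of $K_3=2\hat\lambda_ce^{-4cg_{\max}}$; in your variant $V=\kappa f(|X-Y|)$ the analogous degradation appears in the cross-variation $d[\kappa,f(r)]$ and must be controlled by the same assumption, otherwise the stated $K_3$ cannot be obtained.
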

 
\begin{proof}
Consider the coupling 
\begin{align}
    dX(t) &= b(X(t), \mathcal{L}_{X(t)})dt +  dW(t) + \nu(X(t))dL^{X}(t), \;\; t>0,  \\
    dY(t) &= b(Y(t), \mathcal{L}_{Y(t)} )dt + (I - 2E(t)E^{\top}(t))dW(t) + \nu(Y(t))dL^{Y}(t), \; \tau >t,
\end{align}
and 
\begin{align*}
    X(t) = Y(t), \quad \tau \leq t,
\end{align*}
where the coupling time $\tau$ is
\begin{align}
    \tau := \inf\{ t>0\; ; \; X(t) = Y(t)\},
\end{align}
and $E(t) = (X(t) - Y(t))/|X(t) - Y(t)|$.  
Note that for every $s \ge 0$ the matrix $I- 2E(t)E^{\top}(t) I(\tau>t)$ is orthogonal. 

In what follows, for brevity, we will denote $X_{t} := X(t)$, $Y_{t} := Y(t) $, $W_{t} = W(t)$, $L^X_t := L^X(t)$, $L^Y_t := L^Y(t)$, $E_t:=E(t)$,  $b^X_t := b(X_t, \mathcal{L}_{X_t})$, and $b^Y_t := b(Y_t, \mathcal{L}_{Y_t})$. Denote $Z_{t} := g(X_{t}) + g(Y_t)$. Introduce 
for $\tau >t$: $r_t = \exp(-c Z_{t})|X_{t} - Y_{t}|$ and for $\tau \leq t$: $r_t=0$.

By Ito's formula, we have for $\tau >t$:
\begin{align*}
    d r_t & = \exp(-c Z_{t}) \Big[( E_{t} \cdot (b^{X}_{t} - b^{Y}_{t})) dt + 2 E_t^{\top} dW_{t}   
     + |X_{t} - Y_{t}| \big[ cF_{t} dt  + \frac{c^{2}}{2}H_t - c (\nabla g(X_{t}) \cdot dW_{t}) \\   
    &   - c  (\nabla g(Y_{t}) \cdot( I - 2E_{t}E_{t}^{\top})dW_{t})  \big] - 2c\nabla ^{\top} g(X_{t}) E_t dt 
     + 2c  \nabla ^{\top}  g(Y_{t}) E_t dt \\
    & + \big\{\big((E_{t} \cdot\nu(X_{t})) -c |X_{t} - Y_{t} |\big) dL^{X}_{t}  -\big((E_{t} \cdot \nu(Y_t)) +c |X_{t} - Y_{t} |\big) dL^{Y}_{t}\big\}\Big],
\end{align*}
where $F_{t} :=  -  [(\nabla g(X_{t}) \cdot b^{X}_{t}) + (\nabla g(Y_{t}) \cdot b^{Y}_{t}) + \frac{1}{2}\Delta g(X_{t}) + \frac{1}{2}\Delta g(Y_{t})]  $ and $H_t := |\nabla g(X_t) + (I - 2 E_t E_t^{\top})\nabla g(Y_t)|^2 $. 

Consider an increasing concave function $\psi : [0,R_1] \rightarrow \R_+$ with $\psi(0)=0$, whose precise choice will be made later. Using Ito's formula, we get for $\tau >t$:
\begin{align*}
     d\psi(r_t) =  \psi'(r_t) dr_t + \frac{1}{2}  \psi''(r_t) d[r, r]_{t},
\end{align*}
where $[r, r]_{t} $ denotes a bracket process which is a non-decreasing process satisfying for $ \tau >t$:
\begin{align*}
    d [r_t,  r_t] & = e^{-2 c Z_t } \big| - c |X_t - Y_t |\nabla g(X_t)  -  c |X_t - Y_t| (I - 2 E_t E_t^\top)\nabla g(Y_t)   + 2E_t\big|^2 dt \numberthis     \\  
    &  = 4e^{-2 c Z_t } dt +  e^{-2 c Z_t }c^{2} |X_t - Y_t|^2  | \nabla g(X_t) + (I - 2 E_t E_t^\top)\nabla g(Y_t)|^2 \\  
    &   - 4 |X_t - Y_t|c e^{-2 c Z_t }\big( (\nabla g(X_t) \cdot E_t) - (\nabla g(Y_t) \cdot E_t)\big)  \\  
    &  \geq 4e^{-2 c Z_t } dt -  4 e^{-2 c Z_t }c\big( (\nabla g(X_t) - \nabla g(Y_t)) \cdot (X_t- Y_t)\big)dt \\   
    &  \geq 4 e^{-2 c Z_t}\big( 1 - 2 c R_1  \big) dt = 4 e^{-2 c Z_t} \hat{\lambda}_c dt . 
\end{align*}
Let $F_{\max} := \sup_{x \in \bar{G}, \mu \in \mathcal{P}(\bar{G})}( 2 |b(x, \mu)|  + |\Delta g(x)|  ).$
Using (\ref{eq:ext_sphere_condition}) and that $\max_{x \in \bar{G}} |\nabla g (x)| = 1$ , $\psi ' >0$ and $\psi'' <0$, we get for $\tau >t$:
\begin{align*}
     d\psi(r_t) &\leq \psi'(r_t) \exp(-c Z_t)|b^X_t - b^Y_t|dt + c \psi'(r_t) r_t \big( F_{\max} + 2c\big)dt 
     +  4c\exp(-c Z_t)\psi'(r_t)\tilde{g}_{\max} dt
       \\  &  \quad 
     + 2\hat{\lambda}_c \psi''(r_t)\exp(-2c Z_{t})dt 
   + \psi'(r_t) \exp(-c Z_t) \big( 2(E_t \cdot dW_t) - c (\nabla g(X_t) \cdot d W_t) 
   \\  & \quad 
   - c (\nabla g(Y_t) \cdot (I - 2 E_t E_t^{\top}) dW_t) \big) .       
\end{align*}
Let us denote the martingale term for $\tau >t$ as
\begin{align*}
M_t &=   \int_0^t \psi'(r_u) \exp(-c Z_u) (\big( 2E_u - c \nabla g(X_u)   
   - c (I - 2 E_u E_u^{\top})\nabla g(Y_u)\big) \cdot  dW_u) . 
\end{align*}
Exploiting Assumption~\ref{as:new}, we obtain
 \begin{align*}
     \E\psi(r_{t \wedge \tau}) &\leq  \E \psi(r_0) + \E\bigg\{ \int_0^{t \wedge \tau}\Big(\psi'(r_u) (Lr_u + \exp(-c Z_u)L\E|X_u - Y_u|)+  c \psi'(r_u) r_u \big( F_{\max} + 2c\big) \\
      &  +  4c\exp(-c Z_u)\psi'(r_u) 
     + 2\hat{\lambda}_c\psi''(r_u)\exp(-2c Z_{u})\Big) du\bigg\} + \E M_{t \wedge \tau} \\
      &   \leq \E \psi(r_0)+ \E\bigg\{\int_0^{t \wedge \tau}\psi'(r_u) ( K_1 r_u  + K_2)  +  \psi''(r_u) K_3 du \bigg\} , \numberthis \label{rcbo_equation_5.7}
\end{align*}
where we have used Doob's optional stopping theorem to get $\E M_{t \wedge \tau} = 0$.

Now we aim to find  an increasing concave function $\psi$ such that for $r \in [0,R_1]$:
\begin{align}
    \psi'(r)(K_1 r + K_2)   +  K_3\psi''(r) \leq - C_0\psi(r), \label{rcbo_equation_5.8}
\end{align}
for some constant $C_0 >0$ independent of $r$.  
To this end, we make the choice of derivative of $\psi$ inspired from \cite{eberle2016reflection} as:
\begin{align*}
    \psi'(r) = \psi_1(r) \psi_2(r),
\end{align*}
where 
\begin{align*}
    \psi_1(r) = \exp\bigg(- \frac{K_1}{K_3} \frac{r^2}{2} - \frac{K_2}{K_3} r\bigg) 
\end{align*}
and
\begin{align}
   \psi_2(r) =  1 -  \frac{ C_0}{K_3} \int_{0}^{r}\frac{1}{\psi_1(s)} \int_{0}^{s}\psi_1(s') ds'ds  
\end{align}
with 
\begin{align*} 
  C_0 =   \frac{K_3}{2 \int_{0}^{R_1}\frac{1}{\psi_1(s)} \int_{0}^{s}\psi_1(s') ds'ds } .
\end{align*}
It is not difficult to verify that $\psi(r)=\int_0^r \psi_1(s) \psi_2(s) ds$ satisfies (\ref{rcbo_equation_5.8}).
Further, $1/2 \leq \psi_2(r) \leq 1$ and hence
\begin{align}
   \frac{r}{2} \psi_1(R_1) \leq \psi(r) & \leq r. \label{eq:psile}
\end{align}

From (\ref{rcbo_equation_5.7}) and (\ref{rcbo_equation_5.8}), taking into account that $r(\tau)=0$ and $\psi(0)=0$, we get
\begin{align*}
  \E\big( \psi(r_{t})  I(  \tau \geq t)\big) =\E \psi(r_{t \wedge \tau}) \leq  \E\psi(r_0) - C_0\E \int_{0}^{t \wedge \tau} \psi(r_u)du =  \E\psi(r_0) - C_0\int_{0}^{t }\E \big( \psi(r_u) I(  \tau \geq t) \big) du.
\end{align*}
Therefore, $\E\big( \psi(r_{t})I(  \tau \geq t)\big)  \leq   \E\psi(r_0) e^{-C_0 t}.$
 Noting that $ \E\big( \psi(r_{t \wedge \tau})  I(  \tau \leq t)\big) = 0$, we have
\begin{align*}
 \E \psi(r_{t}) \leq   \E\psi(r_0) e^{-C_0 t}.
\end{align*} 
Then, thanks to (\ref{eq:psile}), we get 
\begin{align}
    \E r_t \leq 2 \psi_1^{-1}(R_1)\E\psi(r_0) e^{-C_0 t}
\end{align}
and, since $ \exp(-2 c g_{\max} ) |X_t- Y_t| \leq r_t $, we arrive at 
\begin{align*}
    \E |X_t- Y_t| \leq 2 \exp(2cg_{\max}) \psi_1^{-1}(R_1)\E|X_0 - Y_0|e^{- C_0 t} . 
\end{align*}
\end{proof}


\subsection{Convergence of mean-field limit of CBO-type models to the global minimum}\label{sec:globminim}

This section is devoted to showing convergence of constrained CBO models from Sections~\ref{subsec_2.1} and \ref{subsec_2.2} towards global minimizers. Similar to \cite{carrillo_analytical_2018, carrillo_consensus-based_2021} (see also \cite{tretyakov_consensus-based_2023}), the main tool which we will utilize is the Laplace principle which states that for any compactly supported measure $\mu$ and positive function $f$ the following holds:  
    \begin{align}
        \lim_{\alpha \rightarrow \infty}- \frac{1}{\alpha}\log \int_{\R^d} e^{-\alpha f(x)} \mu(dx)  = f(x_{\min})
    \end{align}
    with $x_{\min}$ being the minimizer of $f(x)$ defined on the support of  $\mu$.
We will use the notation $$ \osc = f_{\max} - f_{\min}.$$ 

The analysis in this section is done for convex domains. 

\begin{assumption} \label{convex_assum}
     $G$  is a convex bounded domain.
\end{assumption}

Let us explain why we restrict analysis in this section to convex domains. In the case of non-convex domains, CBO models with constraints of the type presented in Sections~\ref{subsec_2.1} and~\ref{subsec_2.2} face two difficulties. The first difficulty concerns the proof techniques used for showing convergence to the global minimum which has been used so far in the CBO literature (see, e.g. \cite{carrillo_analytical_2018, carrillo_consensus-based_2021,tretyakov_consensus-based_2023,Massimo24} and references therein and also the proofs below in this section). Roughly speaking, noise does not play a positive role within those proofs which rely on the drift driving solutions of the mean-field SDEs to the consensus. It works well in the convex case and in the whole Euclidean space. However, in situations like the one illustrated in  
Figure~\ref{fig:ccbo_non_convex_example}, the drift forces trajectories to go outside the domain rather than driving it to a consensus and, hence, we need to rely on the noise term (via effectively large deviations) to get trajectories to a proximity of the global minimum, which happens with a positive probability, and then the drift can drive trajectories to a consensus. Hence, a probabilistic proof needs to be developed for proving convergence to the global minimum, which is an interesting problem for future research. Such a proof would also be able to provide more insight on how convergence to the global minimum depends on the noise strength which would be useful even in the whole Euclidean space.
The second difficulty is more fundamental as it is related to the design of CBO models themselves. In the example of Figure~\ref{fig:ccbo_non_convex_example}, the drift forces trajectories the wrong way because its form is based on the Euclidean metric. Therefore, it is of interest to find a different formulation of CBO models which is to imbibe more information of the domain (e.g., by using a non-Euclidean metric) but at the same time which remains computationally efficient. Constructing such models is out of scope of this work. Nevertheless, we tested the CBO model of Section~\ref{subsec_2.1} on a non-convex function with non-convex domain (see Section~\ref{subsec:towsend}) and the CBO model demonstrated excellent performance. 

\begin{figure}[ht]
    \centering
    \includegraphics[width=0.35\linewidth]{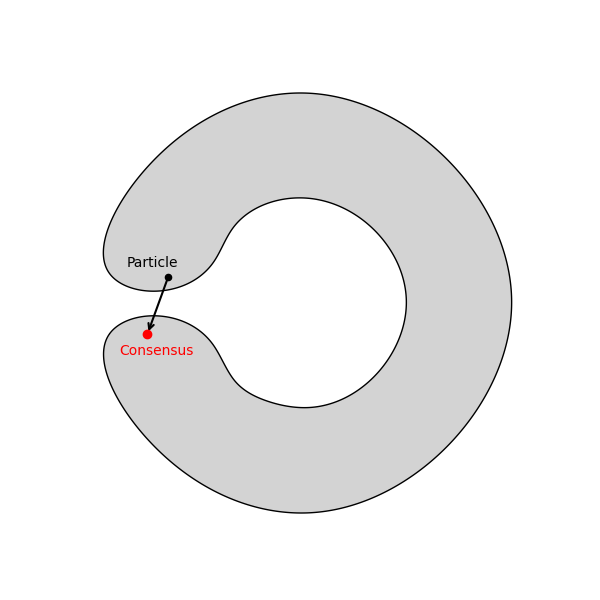}
    \caption{The example of a non-convex domain, where the Euclidean path of a particle to the consensus does not lie within the domain. In the convex case, this behavior cannot occur.}
    \label{fig:ccbo_non_convex_example}
\end{figure}

In Theorem~\ref{subsec_ref_cbo}, we prove convergence of the reflected CBO model from Section~\ref{subsec_2.1} towards the global minimum and consider the case of CBO with repelling force from Section~\ref{subsec_2.2} in Theorem~\ref{subsec_ref_cbo_rep}.
We note that under Assumptions~\ref{as:lipschitz} and \ref{convex_assum} well-posedness of (\ref{eq:MVsde}) and the corresponding propagation of chaos were proved in \cite{adams_large_2022}. 

\begin{theorem}\label{subsec_ref_cbo}
    Let Assumptions~\ref {as:objective} and \ref{convex_assum} hold. If the following condition is satisfied 
    \begin{align*}
        \eta_0 := 2\beta - \sigma^{2} \big( 1 +  e^{ 2 \alpha \osc} \big) >0 
    \end{align*}
    then there exists an $x^* \in \bar{G}$ such that $X(t) \rightarrow x^{*}$ as $t\rightarrow \infty$ and
    \begin{align*}
    f(x^{*}) \leq f_{\min} + \Gamma(\alpha),
\end{align*}
where $X(t)$ is the solution of the mean-field SDE (\ref{eq:mf_cbo}) and  $\Gamma(\alpha) \rightarrow 0$ as $\alpha \rightarrow \infty$. 
\end{theorem}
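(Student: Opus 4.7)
The plan is to adapt the Lyapunov-type argument from the CBO literature (cf.\ \cite{carrillo_analytical_2018, carrillo_consensus-based_2021}) to the reflected setting, exploiting convexity of $G$ to dispose of the local time contribution. I will combine an exponential-decay estimate for the variance around the weighted mean with the Laplace principle to conclude.

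I would first introduce the Lyapunov functional $V(t) := \tfrac{1}{2}\,\E|X(t) - \bar X(t)|^2$, where $\bar X(t) := \bar X(\mathcal{L}_{X(t)})$ is a deterministic function of $t$, and aim for the exponential decay $V(t) \le V(0) e^{-\eta_0 t}$. To this end I would apply It\^o's formula to $|X(t) - \bar X(t)|^2$ using (\ref{eq:mf_cbo}). The drift contributes $-2\beta|X-\bar X|^2\,dt$; the diagonal noise gives an It\^o correction of $\sigma^2|X-\bar X|^2\,dt$ since $\tr[\diagon(X-\bar X)^2] = |X-\bar X|^2$; there is a cross term $-2(X-\bar X)\cdot \tfrac{d\bar X}{dt}\,dt$; and the local time term reads $2(X-\bar X)\cdot \nu(X)\,I_{\partial G}(X)\,dL(t)$. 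The last of these is non-positive: $G$ is convex and $\bar X(t) \in \bar G$, so by the inward-normal characterization $(X(t)-\bar X(t))\cdot \nu(X(t)) \le 0$ whenever $X(t) \in \partial G$. Taking expectations, one should obtain
\[
\frac{dV}{dt} \;\le\; -(2\beta - \sigma^2)\,V(t) \;-\; \E\bigl[(X(t) - \bar X(t)) \cdot \tfrac{d\bar X}{dt}\bigr].
\]

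The key technical step will be to bound the cross term by $\tfrac{\sigma^2}{2} e^{2\alpha\osc}\,V(t)$, which combined with the preceding line gives $\tfrac{dV}{dt} \le -\eta_0\,V(t)$ with the claimed $\eta_0$. I would do this by differentiating the quotient defining $\bar X(t)$ and substituting the Fokker--Planck equation for the density of $\mathcal{L}_{X(t)}$; the reflection boundary condition is exactly what ensures that the boundary terms from integration by parts vanish, so that the computation essentially mirrors the unconstrained one. The Lipschitz property of $f$ together with the elementary two-sided bound $e^{-\alpha f_{\max}} \le e^{-\alpha f(x)} \le e^{-\alpha f_{\min}}$ produces the $e^{\alpha\osc}$ factor; a Cauchy--Schwarz step then yields an estimate of the form $|\tfrac{d\bar X}{dt}| \le C\,e^{\alpha\osc}\sqrt{V(t)}$, and one further Young inequality closes the loop with the right constants.

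Once exponential decay of $V$ is in hand, the bound $|\tfrac{d\bar X}{dt}| \le C\,e^{\alpha\osc}\sqrt{V(t)}$ is integrable in $t$, so $\bar X(t)$ is Cauchy in $\bar G$ and converges to some $x^* \in \bar G$; since $V(t) \to 0$, the law $\mathcal{L}_{X(t)}$ concentrates at $x^*$ and hence $X(t) \to x^*$. For the bound $f(x^*) \le f_{\min} + \Gamma(\alpha)$ I would invoke the Laplace principle applied to $\mathcal{L}_{X(t)}$: because that law concentrates near $x^*$, its weighted $e^{-\alpha f}$--mean $\bar X(t)$ must agree with the minimizer $x_{\min}$ of $f$ up to an error $\Gamma(\alpha) \to 0$ as $\alpha \to \infty$, and Lipschitzness of $f$ then transfers this to a bound on $f(x^*)$. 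The hard part, in my view, is the third step: executing the Fokker--Planck calculation for $\tfrac{d\bar X}{dt}$ under the reflection boundary condition carefully enough to track constants and recover precisely the coefficient $1 + e^{2\alpha\osc}$ of $\sigma^2$ that appears in the hypothesis $\eta_0 > 0$.
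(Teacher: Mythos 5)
There is a genuine gap, and it sits exactly where you flagged it: the bound on the cross term $\E\bigl[(X(t)-\bar X(t))\cdot \tfrac{d\bar X}{dt}\bigr]$. Your choice of Lyapunov functional centres the variance at the weighted mean $\bar X(t)$, for which $\E[X(t)-\bar X(t)]=\E X(t)-\bar X(t)\neq 0$, so the cross term survives and you are forced to control $\tfrac{d\bar X}{dt}$. The Fokker--Planck computation you propose for this requires differentiating $e^{-\alpha f}$ (and hence $f$), whereas Assumption~\ref{as:objective} only gives Lipschitz continuity; moreover, any such differentiation produces factors of $\alpha\nabla f$, i.e.\ a constant growing like $\alpha L_f$, which cannot be absorbed into the clean coefficient $\sigma^2\bigl(1+e^{2\alpha\osc}\bigr)$ appearing in the hypothesis $\eta_0>0$. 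Your subsequent conclusion that $\bar X(t)$ is Cauchy also rests on the same unproven integrable bound for $|\tfrac{d\bar X}{dt}|$, so both halves of the argument (variance decay and convergence to $x^*$) depend on the step that is missing.

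The paper's proof avoids this entirely by centring at $\E X(t)$ instead of $\bar X(t)$: then the cross term $\E\bigl[(X(s)-\E X(s))\cdot d\E X(s)\bigr]$ vanishes identically, and the only place the weighted mean enters is through the purely algebraic estimate
\begin{align*}
|\E X(s)-\bar X(s)|^2=\Bigl|\E\Bigl[(\E X(s)-X(s))\tfrac{e^{-\alpha f(X(s))}}{\E e^{-\alpha f(X(s))}}\Bigr]\Bigr|^2\le e^{2\alpha\osc}\,\E|X(s)-\E X(s)|^2,
\end{align*}
which needs no regularity of $f$ beyond boundedness and delivers exactly the coefficient $1+e^{2\alpha\osc}$ via $\E|X-\bar X|^2=\E|X-\E X|^2+|\E X-\bar X|^2$. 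For the convergence of $X(t)$, the paper then runs a second It\^o computation on $|X(t)-q|^2$ for a fixed $q\in G$ (again using convexity to kill the local time), showing $\tfrac{d}{dt}\E|X(t)-q|^2\le c_3e^{-\eta_0 t/2}$, whence $\E X(t)$ converges; Markov plus Borel--Cantelli upgrades this to a.s.\ convergence of $X(t)$, and only then are the Laplace principle and the mean value theorem invoked. If you recentre your functional at $\E X(t)$ and replace the Cauchy argument for $\bar X(t)$ by the fixed-reference-point computation, your outline becomes the paper's proof; as written, the key inequality is not established and is unlikely to hold with the stated constant.
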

\begin{proof}
Using Ito's formula, we get
\begin{align*}
    |X(t) &- \E X(t) |^{2} = |X(0) - \E X(0)|^{2} - 2 \int_{0}^{t} \beta (X(s) - \E X(s)) \cdot (X(s) - \bar{X}(s)) ds 
    \nonumber 
    \\  & \;\;\;
 - 2\int_{0}^{t} (X(s) - \E X(s)) d\E X(s)
 + 2 \int_{0}^{t}\sigma \big((X(s) - \E X(s)) \cdot \diagon(X(s) - \bar{X}(s)) dW(s)\big) 
    \nonumber \\ & \;\;\;
    + \int_{0}^{t} \sigma^{2} |X(s) - \bar{X}(s)|^{2} ds + 2 \int_{0}^{t} ((X(s) - \E X(s)) \cdot \nu(X(s))) dL(s) 
    \nonumber \\ & 
= |X(0) - \E X(0)|^{2} -    2\int_{0}^{t}\beta|X(s) - \E X(s)|^{2} ds  
\\ & \;\;\;
 - 2\int_{0}^{t} (X(s) - \E X(s)) d\E X(s) 
-2\int_{0}^{t}\beta(( X(s) - \mathbb{E} X(s)))\cdot (\E X(s) - \bar{X}(s)))ds  
\\ & \;\;\;
+   \int_{0}^{t}\sigma^{2}|X(s) - \bar{X}(s)|^{2} ds 
 +  2 \int_{0}^{t}\sigma \big((X(s) - \E X(s)) \cdot \diagon(X(s) - \bar{X}(s)) dW(s)\big) 
 \\  &  \;\;\; 
 + 2 \int_{0}^{t} ((X(s) - \E X(s)) \cdot \nu(X(s))) dL(s) .
\end{align*}
  Due to Assumption~\ref{convex_assum}, we have
\begin{align*}
    ((X(s) - \E X(s)) \cdot \nu(X(s))) \leq 0.
\end{align*}
Also, note that $\E(( X(s) - \mathbb{E} X(s))\cdot (\E X(s) - \bar{X}(s))) =  0$.
Consequently, we get
\begin{align*}
     \var(t):=& \E|X(t) - \E X(t) |^{2} \leq  \E|X_{0} - \E(X_{0})|^{2} 
      -    2\int_{0}^{t}\beta\E|X(s) - \E X(s)|^{2} ds   \\
&+   \int_{0}^{t}\sigma^{2}\E|X(s) - \bar{X}(s)|^{2} ds.
\end{align*}

We have
\begin{align}
    \E|X(s) - \bar{X}(s)|^{2} = \E|X(s) - \E X(s)|^{2} +  |\E X(s) - \bar{X}(s)|^{2} \label{rccbo_equan_5.2}
\end{align}
and
\begin{align*}
     &|\mathbb{E}X(s^{}) - \bar{X}(s^{})|^{2} = \bigg| \mathbb{E}X(s^{}) - \frac{\mathbb{E}X(s^{})e^{-\alpha f(X(s^{}))}}{\mathbb{E}e^{-\alpha f(X(s^{}))}}\bigg|^{2} =
    \bigg|\mathbb{E} \bigg( \Big(\mathbb{E}X(s^{}) - X(s^{})\Big)\nonumber 
    \times \frac{e^{-\alpha f(X(s^{}))}}{\mathbb{E}e^{-\alpha f(X(s^{}))} }\bigg)\bigg|^{2}\\ 
    & \;  \leq  e^{ 2 \alpha \osc} \E|X(s) - \E X(s)|^{2}. \numberthis  \label{rccbo_equan_5.3}
\end{align*}
Therefore,
\begin{align*}
    \frac{d}{dt} \var(t) \leq    \Big(- 2\beta + \sigma^{2} \big( 1 +  e^{ 2 \alpha \osc} \big)\Big) \var(t)
        \leq \Big(- 2\beta + \sigma^{2} \big( 1 +  e^{ 2 \alpha \osc} \big)\Big) \var(t), \numberthis \label{equation_rcbo_5.4}
\end{align*}
which implies
\begin{align*}
    \var(t) \leq \var(0) e^{-\eta_0 t}. \numberthis
\end{align*}
This also means, due to (\ref{rccbo_equan_5.2}) and (\ref{rccbo_equan_5.3}), there exist positive constants $c_1 $ and $c_2$ independent of $t$ such that
\begin{align}
     \E|X(t) - \bar{X}(t)|^{2} &\leq c_1 e^{-\eta_0 t}, \label{eqn_rcbo_5.8} \\ 
     |\E X(t) - \bar{X}(t)|^{2} & \leq c_2 e^{-\eta_0 t}. \label{eqn_rcbo_5.9}
\end{align}

Let us fix a constant $q \in G$. Then, again using Ito's formula, we have
\begin{align*}
    d|X(t) -q |^{2} &= -2 \beta \big( ( X(t) - q )\cdot (X(t) - \bar{X}(t))\big)dt + \sigma^{2}|X(t) - \bar{X}(t)|^{2} dt   
    \\   &  \quad 
    + 2 \sigma \big( ( X(t) - q )\cdot \diagon(X(t) - \bar{X}(t))dW(t)\big) + 2 ((X(t) - q )\cdot \nu(X(t))) dL(t). 
\end{align*}
Using Tanaka's trick (thanks to Assumption~\ref{convex_assum}) and taking expectation on both sides, we have
\begin{align*}
    d\E|X(t) -q |^{2} &\leq -2 \beta \E\big( ( X(t) - q )\cdot (X(t) - \bar{X}(t))\big)dt + \sigma^{2} \E|X(t) - \bar{X}(t)|^{2} dt  
    \\  & 
    \leq -2 \beta \E\big( ( X(t) - q )\cdot (X(t) - \bar{X}(t))\big)dt + \sigma^{2} c_1 e^{-\eta_0 t} dt, 
\end{align*}
where we have used (\ref{eqn_rcbo_5.8}) in the last inequality. Splitting the term $\big( ( X(t) - q )\cdot (X(t) - \bar{X}(t))\big)$, we get
\begin{align*}
    \frac{d}{dt}\E|X(t) -q |^{2} &\leq 
  -2 \beta \E\big( ( X(t) - q )\cdot (X(t) - \bar{X}(t))\big) + \sigma^{2} c_1 e^{-\eta_0 t} 
    \\ & \leq -2 \beta \E| X(t) - q |^{2} + 2 \beta ((\E X(t) - q)\cdot ( \bar{X}(t) -q)) + \sigma^{2} c_1 e^{-\eta_0 t}. 
\end{align*}
Rewriting $ ((\E X(t) - q)\cdot ( \bar{X}(t) -q)) = |\E X(t) - q|^{2}  +  ((\E X(t) - q)\cdot ( \bar{X}(t) - \E X(t)))$ gives
\begin{align*}
    \frac{d}{dt}\E|X(t) -q |^{2} &\leq -2 \beta \E| X(t) - q|^{2} + 2\beta|\E X(t) - q|^{2} + 2\beta((\E X(t) - q)\cdot ( \bar{X}(t) - \E X(t))) +  \sigma^{2} c_1 e^{-\eta_0 t}.
\end{align*}
 Due to Jensen's inequality, we get $|\E X(t) - q|^{2} \leq \E |X(t) - q|^{2}$
which provides the following differential inequality:
\begin{align*}
    \frac{d}{dt}\E|X(t) -q|^{2} &\leq  2\beta((\E X(t) - q)\cdot ( \bar{X}(t) - \E X(t))) +  \sigma^{2} c_1 e^{-\eta_0 t}
    \\  & 
    \leq  2\beta \sqrt{c_{2}} |\E X(t) - q| e^{-\eta_0 t/2}  +  \sigma^{2} c_1 e^{-\eta_0 t}.
\end{align*}
Note that $|\E X(t) - q|  $ is bounded uniformly in $t$ due to boundedness of domain $G$. 
 Therefore, we can say that there exists a constant $c_3>0$ such that
\begin{align}
    \frac{d}{dt}\E|X(t) -q |^{2} \leq c_3 e^{-\eta_0 t/2}, \label{equation_rcbo_5.7}
\end{align}
which together with $|\E X(t) -q |^{2} \leq \E|X(t) -q |^{2}$ implies that there exists an $x^* \in \bar{G}$ such that
$\E X(t)$ is converging to $x^*$ as $t \rightarrow \infty$. Here, $x^*$ belongs to $\bar{G}$ due to convexity of $G$. 

Using Markov's inequality, we get
\begin{align*}
    \mathbb{P}(|X(t) - \mathbb{E}X(t)| \geq e^{-\eta_0 t/4}) \leq \frac{\var{(t)}}{e^{- \eta_0 t/2}} \leq C e^{- \eta_{0} t/2},
\end{align*}
where $C$ is a positive constant independent of $t$. Then, using the Borel-Cantelli lemma, $|X(t) - \mathbb{E}X(t)| \rightarrow 0$ as $t \rightarrow \infty$ a.s., and hence  $X(t) \rightarrow x^{*}$ a.s. Consequently, applying the bounded convergence theorem, we arrive at $\mathbb{E}e^{-\alpha f(X(t))} \rightarrow e^{-\alpha f(x^{*})} $ as $t \rightarrow \infty$.   Therefore, we can say: for all $\epsilon >0$ there exists a $T^* >0$ such that
 \begin{align}
     \big|\mathbb{E}e^{-\alpha f(X(t))} - e^{-\alpha f(x^{*})}\big| \leq \epsilon  \label{rcbo_eqn_5.5}
 \end{align}
for all $ t \geq T^*$. Since the probability distribution of $X(T^*)$ is compactly supported, using the Laplace principle we obtain
\begin{align}
    - \frac{1}{\alpha}\log(\E e^{-\alpha f(X(T^*))} )   
    \leq 
    f_{\min} + \Gamma_{1}(\alpha),   \label{rcbo_eqn_5.6}
\end{align}
where $\Gamma_{1}(\alpha) \rightarrow 0$ as $\alpha \rightarrow \infty$. And, from (\ref{rcbo_eqn_5.5}), we have
\begin{align*}
    \E e^{-\alpha f (X(T^*))} \leq \epsilon + e^{-\alpha  f(x^*)},
\end{align*}
which implies
\begin{align}
     \log \E e^{-\alpha f (X(T^*))} \leq \log\big(\epsilon + e^{-\alpha  f(x^*)}\big) \,\, \text{ and }  \,\,
     -\frac{1}{\alpha} \log \E e^{-\alpha f (X(T^*))} \geq -\frac{1}{\alpha}\log\big(\epsilon + e^{-\alpha  f(x^*)}\big).  \label{rcbo_eqn_5.7}
\end{align}
Using (\ref{rcbo_eqn_5.6}) and (\ref{rcbo_eqn_5.7}), we obtain
\begin{align}
   -\frac{1}{\alpha}\log\big(\epsilon + e^{-\alpha  f(x^*)}\big) \leq  f_{\min} + \Gamma_{1}(\alpha).
\end{align}
From the mean value theorem, we have
\begin{align}
    \log\big(\epsilon + e^{-\alpha  f(x^*)}\big) = - \alpha f(x^{*}) +  \frac{\epsilon}{\gamma \epsilon + e^{-\alpha f(x^{*})}},
\end{align}
for some $\gamma \in (0,1)$. Therefore, 
\begin{align*}
    f(x^{*}) \leq f_{\min} + \Gamma_{1}(\alpha) +  \frac{\epsilon}{\alpha} \frac{1}{\gamma \epsilon + e^{-\alpha f(x^{*})}}.
\end{align*}
\end{proof}

\begin{theorem}\label{subsec_ref_cbo_rep}
     Let Assumptions~\ref {as:objective} and \ref{convex_assum} hold. Let
    \begin{align*}
        \eta_1 := - 2\beta + 3 \lambda(0) + \sigma^{2} \big( 1 +  e^{ 2 \alpha \osc}\big)
    \end{align*}
    be strictly greater than a positive constant $K_1$. If $ \lambda(t) \leq K_2/(1 + t^{2}) $ for some $ K_2 >0$, 
    then there exists an $x^* \in \bar{G}$ such that $X(t) \rightarrow x^{*}$ as $t\rightarrow \infty$ and
    \begin{align*}
    f(x^{*}) \leq f_{\min} + \Gamma(\alpha),
\end{align*}
where $\Gamma(\alpha) \rightarrow 0$ as $\alpha \rightarrow \infty$. 
\end{theorem}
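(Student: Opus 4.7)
The plan is to closely follow the architecture of the proof of Theorem~\ref{subsec_ref_cbo}, with the principal modification being the handling of the additional repelling term
\[
\Phi(X(t), \mathcal{L}_{X(t)}) := \int_{\R^d}(X(t) - y)\exp\Big(-\tfrac{1}{2}|X(t)-y|^2\Big)\mathcal{L}_{X(t)}(\d y)
\]
that appears in the drift of \eqref{eq:mf_cbo_repul}. The first step is to apply Ito's formula to $|X(t) - \E X(t)|^2$ exactly as in the proof of Theorem~\ref{subsec_ref_cbo}. Beyond the terms already computed there, we pick up the cross term $-2\lambda(t)\E\big((X(t)-\E X(t))\cdot(\Phi(X(t),\mathcal{L}_{X(t)}) - \E\Phi(X(t),\mathcal{L}_{X(t)}))\big)$. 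Splitting $(X(t)-y) = (X(t) - \E X(t)) + (\E X(t) - y)$ in the definition of $\Phi$ and using $e^{-|\cdot|^2/2} \leq 1$ together with the Cauchy--Schwarz inequality, this cross term is majorized by $3\lambda(t)\var(t)$, where $\var(t) = \E|X(t) - \E X(t)|^2$. Combining with the terms already bounded in \eqref{equation_rcbo_5.4}, this yields
\[
\frac{d}{dt}\var(t) \leq \bigl(-2\beta + 3\lambda(t) + \sigma^{2}(1 + e^{2\alpha \osc})\bigr)\var(t) \leq \eta_1 \var(t),
\]
using $\lambda(t) \leq \lambda(0)$. Under the hypothesis (interpreted so as to give exponential decay via a negative coefficient bounded away from zero by $K_1$), Grönwall's inequality delivers $\var(t) \leq \var(0) e^{-K_1 t}$, from which the analogues of \eqref{eqn_rcbo_5.8}--\eqref{eqn_rcbo_5.9} follow verbatim from \eqref{rccbo_equan_5.2}--\eqref{rccbo_equan_5.3}.

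The second step is to show that $\E X(t)$ converges to some $x^* \in \bar G$. Fixing $q \in G$ and applying Ito's formula to $|X(t) - q|^2$, Tanaka's trick again kills the local time term by Assumption~\ref{convex_assum}. The CBO drift, diffusion, and the term $\sigma^2\E|X(t) - \bar X(t)|^2$ are handled as in \eqref{equation_rcbo_5.7} to give a contribution bounded by $c_3 e^{-K_1 t/2}$. The new repelling contribution is $2\lambda(t)\E\bigl((X(t) - q)\cdot\Phi(X(t),\mathcal{L}_{X(t)})\bigr)$, which is bounded by a constant multiple of $\lambda(t)$ using the uniform bound $|u e^{-|u|^2/2}| \leq e^{-1/2}$ and the boundedness of $\bar G$. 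Hence
\[
\left|\frac{d}{dt}\E|X(t) - q|^2\right| \leq c_3 e^{-K_1 t/2} + c_4 \lambda(t),
\]
whose right-hand side is integrable on $[0,\infty)$ precisely because $\lambda(t) \leq K_2/(1+t^2)$. Consequently $\E|X(t) - q|^2$ admits a limit as $t \to \infty$, and since this holds for every $q \in G$, we conclude that $\E X(t) \to x^*$ for some $x^* \in \bar G$ (the limit lying in $\bar G$ by convexity).

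The third step is identical to the corresponding part of the proof of Theorem~\ref{subsec_ref_cbo}: Markov's inequality combined with the exponential decay of $\var(t)$ and the Borel--Cantelli lemma give $X(t) \to x^*$ almost surely; the bounded convergence theorem then yields $\E e^{-\alpha f(X(t))} \to e^{-\alpha f(x^*)}$; and the Laplace principle applied at any sufficiently large time $T^*$ together with the mean value theorem furnish the bound $f(x^*) \leq f_{\min} + \Gamma(\alpha)$ with $\Gamma(\alpha) \to 0$ as $\alpha \to \infty$.

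The main obstacle is the bookkeeping in the first step: isolating the precise constant $3\lambda(t)$ in front of $\var(t)$ from the cross term involving $\Phi$, and verifying that the uniform boundedness of $ue^{-|u|^2/2}$ together with the Cauchy--Schwarz estimate is tight enough to keep the differential inequality of the same form as in Theorem~\ref{subsec_ref_cbo}. A secondary but important point is the integrability argument in the second step, which is the reason for the quantitative assumption $\lambda(t) \leq K_2/(1+t^2)$: any slower-decaying but still vanishing $\lambda$ would not give the required finite-time-integral bound on the repelling contribution to $\frac{d}{dt}\E|X(t)-q|^2$, and convergence of $\E X(t)$ would then not follow from this scheme.
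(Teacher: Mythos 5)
Your proposal follows the paper's proof essentially verbatim: variance contraction via It\^{o}'s formula plus a bound on the repelling cross term that is linear in $\var(t)$, then convergence of $\E|X(t)-q|^2$ from an integrable derivative bound (which is exactly where $\lambda(t)\le K_2/(1+t^2)$ enters), and finally the same Markov/Borel--Cantelli/Laplace-principle endgame as in Theorem~\ref{subsec_ref_cbo}. One bookkeeping caveat: the paper obtains the constant $3\lambda(0)$ by applying Young's inequality directly to $(X-\E X)\cdot(X-y)$, giving $\tfrac12\var(t)+\tfrac12\cdot 2\var(t)=\tfrac32\var(t)$, whereas your splitting $(X-y)=(X-\E X)+(\E X-y)$ followed by Cauchy--Schwarz gives $\var(t)+\var(t)=2\var(t)$ and hence $4\lambda(0)$, so to match the stated $\eta_1$ you should use the direct Young estimate (your handling of the repelling term in the $|X(t)-q|^2$ step via $\sup_{r\ge 0} re^{-r^2/2}=e^{-1/2}$ and the boundedness of $G$ is correct and in fact slightly cleaner than the paper's).
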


\begin{proof}
Using Ito's formula, we have
\begin{align*}
    |X(t) &- \E X(t) |^{2} = |X(0) - \E X(0)|^{2} - 2 \int_{0}^{t} \beta (X(s) - \E X(s)) \cdot (X(s) - \bar{X}(s)) ds 
    \nonumber 
    \\  & 
 \;\;\; + 2\int_{0}^{t}\lambda(s) \int_{\mathbb{R}^{d}}\big((X(s) - \mathbb{E}X(s)) \cdot (X(s)- y)\big)\exp{\Big(-\frac{1}{2}| X(s)  - y|^{2}\Big)}\mathcal{L}_{X(s)}(\d y) ds \nonumber 
    \\  & \;\;\;
    + 2 \int_{0}^{t}\sigma \big((X(s) - \E X(s)) \cdot \diagon(X(s) - \bar{X}(s)) dW(s)\big) 
    \nonumber \\ & \;\;\;
    + \int_{0}^{t} \sigma^{2} |X(s) - \bar{X}(s)|^{2} ds + 2 \int_{0}^{t} ((X(s) - \E X(s)) \cdot \nu(X(s))) dL(s).
\end{align*}
The objective here is to deal with the repelling term. In that pursuit, we apply Young's inequality to get
\begin{align*}
    \E\int_{\mathbb{R}^{d}}\big((X(s) & - \mathbb{E}X(s)) \cdot (X(s)- y)\big)\exp{\Big(-\frac{1}{2}| X(s)  - y|^{2}\Big)}\mathcal{L}_{X(s)}(\d y) 
    \\  &\leq  \frac{1}{2}\E|X(s)  - \mathbb{E}X(s)|^{2} +  \frac{1}{2} \E\int_{\mathbb{R}^{d}} |X(s)- y|^{2}\exp{\Big(-\frac{1}{2}| X(s)  - y|^{2}\Big)}\mathcal{L}_{X(s)}(\d y) 
     \\  &\leq   \frac{1}{2}\E|X(s)  - \mathbb{E}X(s)|^{2} +  \frac{1}{2} \E\int_{\mathbb{R}^{d}} |X(s)- y|^{2}\mathcal{L}_{X(s)}(\d y)  
     \\ & \leq \frac{3}{2}\E|X(s)  - \mathbb{E}X(s)|^{2}, \numberthis \label{eqn_rcbo_5.12}
\end{align*}
since $  \E\int_{\mathbb{R}^{d}} |X(s)- y|^{2}\mathcal{L}_{X(s)}(\d y)  = 2 \E |X(s) - \E X(s)|^{2}$. 

Exploiting similar arguments as the ones used to get (\ref{equation_rcbo_5.4}) and using the inequality obtained in (\ref{eqn_rcbo_5.12}), we ascertain
\begin{align*}
    \frac{d}{dt}  \var(t) &\leq    \Big(- 2\beta + 3\lambda(0) + \sigma^{2} \big( 1 +  e^{ 2 \alpha \osc} \big)\Big)  \var(t) .
\end{align*}
With an appropriate choice of $\beta$, $\sigma$ and of decreasing $ \lambda(t)$, we have
\begin{align*}
    \frac{d}{dt}  \var(t) \leq c_4 e^{-\eta_1 t},
\end{align*}
where $c_4$ and $\eta_1$ are independent of $t$.

Let us again fix a constant $q \in G$. Applying Ito's formula gives 
\begin{align*}
    d|X(t) -q |^{2} &= -2 \beta \big( ( X(t) - q )\cdot (X(t) - \bar{X}(t))\big)dt + \sigma^{2}|X(t) - \bar{X}(t)|^{2} dt  
    \\  & \quad 
     + 2\lambda(t)\int_{\mathbb{R}^{d}}\big((X(t)  -q) \cdot (X(t)- y)\big)\exp{\Big(-\frac{1}{2}| X(t)  - y|^{2}\Big)}\mathcal{L}_{X(t)}(\d y)
    \\   &  \quad 
    + 2 \sigma  \big( ( X(t) - q )\cdot \diagon(X(t) - \bar{X}(t))dW(t)\big) + 2 (X(t) - q \cdot \nu(X(t))) dL(t). 
\end{align*}
Using Young's inequality, we have
\begin{align*}
    \int_{\mathbb{R}^{d}}\big((X(t) & -q) \cdot (X(t)- y)\big)\exp{\Big(-\frac{1}{2}| X(t)  - y|^{2}\Big)}\mathcal{L}_{X(t)}(\d y) 
    \\   & = \frac{1}{2}|X(t)  -q|^{2} \int_{\mathbb{R}^{d}}\exp{\Big(-\frac{1}{2}| X(t)  - y|^{2}\Big)}\mathcal{L}_{X(t)}(\d y) 
    \\  & \quad 
    +    \frac{1}{2}\int_{\mathbb{R}^{d}} |X(t)- y|^{2}\exp{\Big(-\frac{1}{2}| X(t)  - y|^{2}\Big)}\mathcal{L}_{X(t)}(\d y)
    \\  & 
    \leq \frac{1}{2}|X(t)  -q|^{2} + \int_{\mathbb{R}^{d}}\frac{1}{2}|X(t)- y|^{2}\mathcal{L}_{X(t)}(\d y).
\end{align*}
Using analogous arguments employed to obtain (\ref{equation_rcbo_5.7}), we get 
\begin{align*}
    \frac{d}{dt}\E|X(t) -q |^{2} &\leq c_5 e^{-\eta_1 t/2} +  \lambda(t)\E|X(t)  -q|^{2} + \lambda(t)\E\int_{\mathbb{R}^{d}}|X(t)- y|^{2}\mathcal{L}_{X(t)}(\d y)
    \\   & \leq c_5 e^{-\eta_1 t/2} +  \lambda(t)\E|X(t)  -q|^{2} + 2\lambda(t) c_4 e^{-\eta_1 t}, \numberthis
    \label{equation_rcbo_5.14}
\end{align*}
where $ c_5>0$ is a constant independent of $t$.  Note that $\E|X(t) -q|^{2}$ is bounded uniformly in $t$. Requiring for the function $\lambda(t)$ to tend to zero sufficiently fast as $t \to \infty$, we get that $\E|X(t) -q|^{2}$ converges to a constant as $t \rightarrow \infty$. Therefore, there is an $x^* \in \bar{G}$ such that  $\E X(t) \rightarrow x^*$ as $t \rightarrow \infty$. The rest of the arguments to complete the proof are the same as used in the previous theorem.
\end{proof}

\section{Numerical tests}\label{sec:exp}

In this section, we first (Section~\ref{subsec:numschemes}) consider discrete approximations of the interacting particle systems and then perform several numerical experiments to demonstrate effectiveness of the CBO algorithms described in Sections~\ref{subsec_2.1} and \ref{subsec_2.2} for constrained optimization.

Since these CBO algorithms are probabilistic in nature and convergence is guaranteed only asymptotically (in $N$, $\alpha$ and $t$), we report the success rate of each experiment for a finite $N$, $\alpha$ and $t$. Each experiment is run $10^3$ times, and the success rate is simply the proportion of experiments which successfully find the true global minimizer. We declare an experiment successful if the final consensus is within $\varepsilon$ of the location of the true minimizer. For each experiment, we use the threshold $\varepsilon = 0.1$ unless otherwise specified. 

We note briefly that there are several parameters/controls for the CBO algorithms: (i) the inverse temperature, $\alpha$; (ii) the number of particles, $N$; (iii) strength of the drift and diffusion coefficients, $\beta$ and $\sigma$, respectively; (iv) the integration time, $t$; and (v) the step size, $h$, of a numerical scheme approximating the particle system. 
We do not attempt to propose a systematic way of choosing these parameters, but instead opt for sensible and practically effective choices in the following experiments. In general, $\alpha$ and $N$ should be chosen as large as possible. For $\alpha$, we run the risk of numerical instability if it is chosen too large. For $N$, the computational cost of the algorithm is at least $\mathcal{O} (N)$ (and $\mathcal{O}(N^2)$ in the repelling case), so there is a trade-off that must be balanced. As such, in Sections \ref{subsec:ackley}, \ref{subsec:towsend}, \ref{subsec:rastrigin}, and \ref{subsec:repelling} we explore success rates for varying values of $N$. Similar comments can be made about the number of steps of the numerical scheme. 

The strength of drift, $\beta$, and diffusion $\sigma$, coefficients can also depend on time. We experiment with both of them being constant in the experiments of Sections~\ref{subsec:ackley}, \ref{subsec:towsend}, and \ref{subsec:repelling} but also linearly increasing ($\beta$) and exponentially decreasing ($\sigma$) functions in Sections~\ref{subsec:rastrigin} and~\ref{subsec:pide}. 
In all the experiments for the penalty scheme (\ref{eq:scheme_penalty}) the optimal choice of the penalty $\epsilon =h$ was used.  


\subsection{Approximation of the interacting particle system}
\label{subsec:numschemes}

To implement optimization or sampling methods based on mean-filed SDEs, we need to be able to simulate the interacting particle system (\ref{eq:particle_system}). 
Consider a uniform discretization of the time-interval $[0,T]$, for a fixed $T > 0$, such that $t_{k+1} - t_{k} = h$, $k= 0,\dots,K-1$ and $T = Kh$. 

In the case of CBO models like the ones from Sections~\ref{subsec_2.1} and~\ref{subsec_2.2} we need numerical methods for (\ref{eq:particle_system}) converging in the almost sure sense. There are two types of mean-square/almost sure methods for reflected SDEs in the literature: penalty and projection (see e.g. \cite{PET95,SLO01}). Denote by $\Pi(x) $ the projection of $x$ on $\partial G$ if $x \notin \bar G$ else $\Pi(x) = x$.  Introduce the function (penalty)
$    \pi(x) = (x - \Pi(x)),$
which is half of the gradient of square of distance function of $x$ from $\partial G$. 
Let $Y^{i,N}_k$ be an approximation of (\ref{eq:particle_system}).             
The penalty and projection schemes for reflected SDEs adapted to the interacting particle system (\ref{eq:particle_system}) take the form 
\begin{itemize}
    \item[\textbf{(i)}] \textbf{Penalty scheme : }   
    \begin{align}
        Y^{i,N}_{k+1} &= Y^{i, N}_{k} + b(t_{k}, Y^{i, N}_{k}, \hat{\mu}_{\textbf{Y}^{N}_{k}})h+ \sigma(t_{k}, Y^{i, N}_{k}), \hat{\mu}_{\textbf{Y}^{N}_{k}})\Delta W^{i}(t_{k}) - \frac{h}{\epsilon}  \pi(Y^{i, N}_{k}),    \,\,  
        i = 1,\dots,N,  \label{eq:scheme_penalty}
    \end{align}
    where $\Delta W^{i}(t_{k}) = W(t_{k+1}) - W(t_{k})$ are the Wiener increments. The optimal choice of the penalty strength $\epsilon$ is $h$ \cite{SLO01}.
    \item[\textbf{(ii)}] \textbf{Projection scheme : }
    \begin{align}
        \bar{Y}^{i, N}_{k+1} &= Y^{i, N}_{k} + b(t_{k}, Y^{i, N}_{k}, \hat{\mu}_{\textbf{Y}^{N}_{k}})h + \sigma(t_{k}, Y^{i, N}_{k},\hat{\mu}_{\textbf{Y}^{N}_{k}})\Delta W^{i}(t_{k}) \nonumber \\ 
        Y^{i, N}_{k+1} &= \Pi(\bar{Y}^{i,N}_{k+1}),\;\;\;\; i=1,\dots,N.   \label{eq:scheme_projection}
    \end{align}
\end{itemize}
There are rather limited numerical analysis results for mean-square convergence of these two methods. In the case of convex polyhedrons the mean-square order of convergence of both methods is $1/2$ up to a logarithmic correction and in the case of smooth convex domains mean-square order of $1/4$ has been proved, see e.g. \cite{PET95,SLO01}.    
Like in \cite{tretyakov_consensus-based_2023} one can potentially show that  mean-square convergence of these methods is uniform in number of particles $N$, considering this aspect is beyond the scope of this paper.

For sampling methods, it is sufficient to use weak-sense schemes for approximating  (\ref{eq:particle_system}). Weak first-order methods for reflected SDEs are available in \cite{GN96a,BGT04,leimkuhler2023simplerandom,milstein_stochastic_2004}.

\subsection{Ackley function}
\label{subsec:ackley}

We begin with the Ackley function, which is a widely used benchmark
function for global optimization problems \cite{back1996evolutionary}. The function is defined as:
\begin{equation}
      f(x, y) = -20 \exp\left(-0.2 \sqrt{\frac{1}{2} (x^2 + y^2)}\right) - \exp\left(\frac{1}{2} (\cos(2\pi x) + \cos(2\pi y))\right) + 20 + e.
\end{equation}
We translate the function so that the global minimum
is located at $(2, 2)$ instead of $(0, 0)$. The feasible region is the closed ball of radius 3 centred at the origin:
\begin{equation}
\bar G = \{(x, y) : x^2 + y^2 \leq 9\}.
\end{equation}
The function with constraints is illustrated in Figure~\ref{fig:ackley_plot}. We choose  drift $\beta = 1$ and diffusion $\sigma = 4$, time $t=1$, $\alpha=10^4$. Both the penalty scheme (\ref{eq:scheme_penalty}) and the projection scheme (\ref{eq:scheme_projection}) are tested and the success rates for various number of particles, $N$, and number of time steps, $1/h$ (in other words, choice of $h$ with the fixed $t=1$), are reported in Table~\ref{tab:ackley}.  We observe that both penalty and projection schemes converge reliably to the global minimizer as $N$ and the number of iterations increase. Both approaches exhibit broadly similar performance, achieving near 100\% success for modestly large $N$ and sufficiently many iterations. The projection method performs slightly better, which could be due to the fact that when the consensus is computed, all particles are guaranteed to lie within the feasible region.

\begin{figure}[ht]
    \centering
    \includegraphics[width=0.5\linewidth]{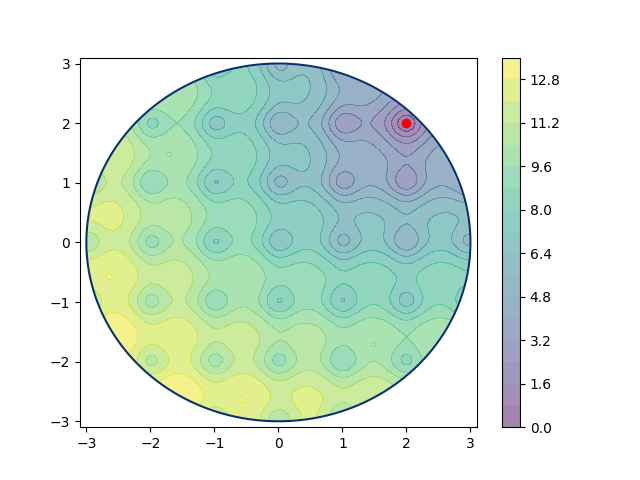}
    \caption{Translated Ackley function with minimum at $(2, 2)$ constrained to a closed ball of radius 3.}
    \label{fig:ackley_plot}
\end{figure}

\begin{table}[ht]
    \centering
    \caption{Translated Ackley function. Comparison of success rates for the penalty and projection schemes.}
    \label{tab:ackley}
    \begin{subtable}{0.45\textwidth}
        \centering
        \caption{Penalty scheme}
        \begin{tabular}{|c|r|r|r|r|}
            \hline
            \multirow{2}{*}{\textbf{$1/h$}} & \multicolumn{4}{c|}{\textbf{$N$}} \\
            \cline{2-5}
             & \textbf{10} & \textbf{20} & \textbf{50} & \textbf{100} \\
            \hline
            5   & 0.055 & 0.137 & 0.416 & 0.717 \\
            10  & 0.256 & 0.605 & 0.966 & 1.000 \\
            20  & 0.762 & 0.986 & 1.000 & 1.000 \\
            50  & 0.725 & 0.973 & 1.000 & 1.000 \\
            100 & 0.317 & 0.728 & 0.993 & 1.000 \\
            \hline
        \end{tabular}
    \end{subtable}
    \hspace{1cm}
    \begin{subtable}{0.45\textwidth}
        \centering
        \caption{Projection scheme}
        \begin{tabular}{|c|r|r|r|r|}
            \hline
            \multirow{2}{*}{\textbf{$1/h$}} & \multicolumn{4}{c|}{\textbf{$N$}} \\
            \cline{2-5}
             & \textbf{10} & \textbf{20} & \textbf{50} & \textbf{100} \\
            \hline
            5   & 0.137 & 0.370 & 0.809 & 0.979 \\
            10  & 0.550 & 0.926 & 1.000 & 1.000 \\
            20  & 0.883 & 0.997 & 1.000 & 1.000 \\
            50  & 0.730 & 0.978 & 1.000 & 1.000 \\
            100 & 0.307 & 0.717 & 0.993 & 1.000 \\
            \hline
        \end{tabular}
    \end{subtable}
\end{table}

\subsection{Non-convex function with heart-shaped constraint}
\label{subsec:towsend}

We now consider a non-convex function from \cite{townsend} defined as
\begin{equation}
f(x, y) = -\big[ \cos((x-0.1)y) \big]^2 - x \sin(3x+y)
\end{equation}
constrained to a heart-shaped region given by the inequality:
\begin{equation}
x^2 + y^2 \leq \Big[2\cos (t) + -\frac{1}{2}\cos(2t) - \frac{1}{4}\cos (3t) - \frac{1}{8}\cos(4t) \Big]^2 + 4 \sin^2(t),
\end{equation}
where $t = \mathrm{atan2}(x, y)$. The function and the feasible region are shown in Figure~\ref{fig:townsend_plot}. The level-set nature of the constraint means that we can project particles inside the domain in the following manner. First, if a particle lies outside the feasible region, the inward normal direction to the level set can be computed. Second, using this normal direction, we can solve - using Newton's method - for the distance along this normal we need to translate the particle such that it will lie on the boundary.
Note that the domain is non-convex.

\begin{figure}[ht]
    \centering
    \includegraphics[width=0.5\linewidth]{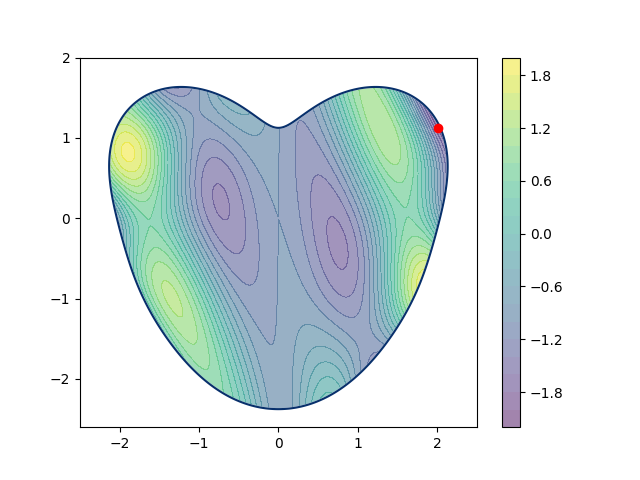}
    \caption{Non-convex function with heart-shaped constraint.}
    \label{fig:townsend_plot}
\end{figure}

Like before, we use drift $\beta = 1$, diffusion $\sigma = 4$ and $\alpha = 10^4$. Further, we use a fixed time step of $h = 1/20$. We report success rates using the projection scheme (\ref{eq:scheme_projection}) for various values of $N$ and number of time steps $K$ in Table~\ref{tab:townsend}.
The results in Table~\ref{tab:townsend} indicate that the CBO method reliably finds the global minimizer despite the non-convex nature of the domain. The observed success rates generally increase with $N$ and time (number of steps $K$), as would be expected.

\begin{table}[ht]
\caption{Success rates of the CBO method on the non-convex function with heart-shaped constraint.}
    \label{tab:townsend}
    \centering
    \begin{tabular}{|c|r|r|r|r|}
        \hline
        \multirow{2}{*}{$K$} & \multicolumn{4}{c|}{${N}$} \\
        \cline{2-5}
         & \textbf{10} & \textbf{20} & \textbf{50} & \textbf{100} \\
        \hline
        5   & 0.29 & 0.54 & 0.90 & 0.95 \\
        10  & 0.46 & 0.69 & 0.88 & 0.99 \\
        20  & 0.54 & 0.81 & 0.95 & 1.00 \\
        50  & 0.59 & 0.78 & 0.98 & 0.99 \\
        100 & 0.57 & 0.81 & 0.97 & 1.00 \\
        \hline
    \end{tabular}
    
\end{table}

\subsection{High-dimensional Rastrigin function}
\label{subsec:rastrigin}
We consider the Rastrigin function in various dimensions. It is defined as
\begin{equation}
f({x}) = 10d + \sum_{i=1}^{d}\left(x_i^2 - 10\cos(2\pi x_i)\right),
\end{equation}
constrained to the closed ball of radius 5 centred at the origin:
$
\bar G = \{{x} \in \mathbb{R}^{d} : |\mathbf{x}| \leq 5\}.
$
 Instead of constant drift and diffusion coefficients, $\beta$ and $\sigma$, we use 
$    \beta(t) = 10 t,$  $\sigma(t) = 10 e^{- t \log10 },$
so that the drift increases by an order of magnitude over the interval $[0, 1]$, while $\sigma$ decreases by an order of magnitude. We take $\alpha = 10^4$ and a step size of $h = 1/500$ is used.

Success rates using the projection scheme (\ref{eq:scheme_projection}) are given in Table~\ref{tab:rastrigin_dim} for various values of dimension, $d$, and number of particles, $N$. We also consider the cases where we have the number of iterations $K = 200$, $500$, and $1000$. 
The results in Table~\ref{tab:rastrigin_dim} highlight the scalability of the algorithm to higher-dimensional settings. While success rates decline with increasing dimensionality, they remain high for modestly large $N$. We also see an increase in success rates as $K$ increases, particularly between $K=200$ and $K=500$.

\begin{table}[ht]
    \centering
    \caption{Comparison of success rates for the Rastrigin function in various dimensions for number of iterations $K=200, 500$, and $1000$.}
    \label{tab:rastrigin_dim}
    \begin{subtable}{0.45\textwidth}
        \centering
        \caption{$K = 200$}
        \begin{tabular}{|c|r|r|r|r|}
            \hline
            \multirow{2}{*}{$d$} & \multicolumn{4}{c|}{\textbf{$N$}} \\
            \cline{2-5}
             & \textbf{10} & \textbf{20} & \textbf{50} & \textbf{100} \\
            \hline
            5   & 0.194 & 0.472 & 0.834 & 0.982 \\
            20  & 0.080 & 0.203 & 0.549 & 0.800 \\
            100 & 0.031 & 0.086 & 0.237 & 0.417 \\
            500 & 0.030 & 0.047 & 0.075 & 0.141 \\
            \hline
        \end{tabular}
    \end{subtable}
    \hspace{1cm}
    \begin{subtable}{0.45\textwidth}
        \centering
        \caption{$K = 500$}
        \begin{tabular}{|c|r|r|r|r|}
            \hline
            \multirow{2}{*}{$d$} & \multicolumn{4}{c|}{\textbf{$N$}} \\
            \cline{2-5}
             & \textbf{10} & \textbf{20} & \textbf{50} & \textbf{100} \\
            \hline
            5   & 0.251 & 0.533 & 0.899 & 0.993 \\
            20  & 0.159 & 0.459 & 0.841 & 0.948 \\
            100 & 0.095 & 0.275 & 0.750 & 0.950 \\
            500 & 0.059 & 0.174 & 0.492 & 0.819 \\
            \hline
        \end{tabular}
    \end{subtable}
    \par\vspace{0.5cm}
    \begin{subtable}{0.45\textwidth}
        \centering
        \caption{$K = 1000$}
        \begin{tabular}{|c|r|r|r|r|}
            \hline
            \multirow{2}{*}{$d$} & \multicolumn{4}{c|}{\textbf{$N$}} \\
            \cline{2-5}
             & \textbf{10} & \textbf{20} & \textbf{50} & \textbf{100} \\
            \hline
            5   & 0.263 & 0.523 & 0.903 & 0.991 \\
            20  & 0.167 & 0.456 & 0.825 & 0.951 \\
            100 & 0.077 & 0.321 & 0.779 & 0.958 \\
            500 & 0.060 & 0.181 & 0.506 & 0.845 \\
            \hline
        \end{tabular}
    \end{subtable}
\end{table}

\subsection{Testing repelling CBO on Rosenbrock function}
\label{subsec:repelling}

Let us now consider the Rosenbrock function in two dimensions, defined as
\begin{equation}
    f(x, y) = (1 - x)^2 + 100(y - x^2)^2.
\end{equation}
In addition we consider the feasible region
\begin{equation}
    \bar{G} = \{(x, y) \in \R^2: x^2 + y^2 \leq 2\},
\end{equation}
such that the minimizer of $f$, $(1, 1)$, lies on the boundary of $G$. The feasible region and the Rosenbrock function are displayed in Figure~\ref{fig:rosenbrock_plot}.

\begin{figure}[ht]
    \centering
    \includegraphics[width=0.5\linewidth]{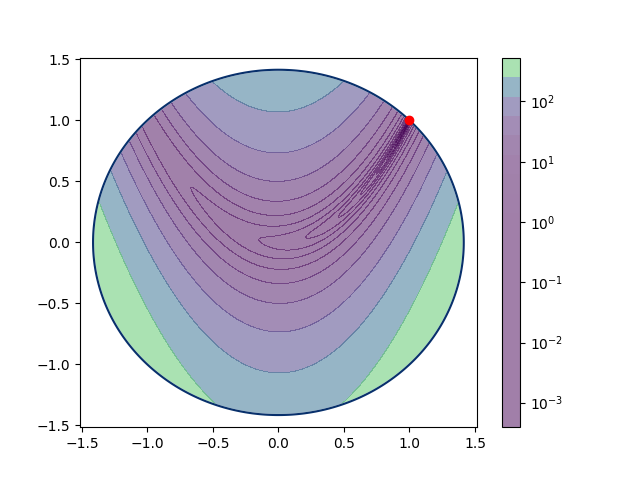}
    \caption{Rosenbrock function constrained to a closed ball of radius $\sqrt{2}$.}
    \label{fig:rosenbrock_plot}
\end{figure}

In this experiment, we compare the CBO method from Section~\ref{subsec_2.1} with the repelling CBO method from Section~\ref{subsec_2.2}. In Table~\ref{tab:repelling}, success rates for both methods are shown for different values of $N$ and $K$. We use the projection scheme (\ref{eq:scheme_projection}) with a fixed step size of $h=1/20$ for each experiment, as well as a drift parameter $\beta = 1$ and diffusion parameter $\sigma = 4$ and $\alpha = 10^4$.
The comparison in Table~\ref{tab:repelling} illustrates the better performance of the repelling CBO method over the standard CBO method, especially for smaller $N$ and $K$. The repelling method's ability to aid with exploration seems particularly relevant in this example, since it allows particles to continue exploring even when they are in a good location relative to other particles.

We remark that although the repelling method outperforms the standard CBO method, this is at the cost of additional computational complexity. Since pairwise interactions are computed, the repelling method is of $\mathcal{O}(N^2)$ complexity. However, it still requires only $N$ objective function evaluations per iteration. Thus, we can achieve better performance without any additional function evaluations which may be relevant when, for example, the objective function is expensive to evaluate. There are ways to overcome the $\mathcal{O}(N^2)$ complexity via locality sensitive hashing or efficient nearest neighbour search, see \cite{frenkel2023understanding} for example.

\begin{table}[ht]
    \centering
    \caption{Constrained Rosenbrock function. Comparison of success rates between standard and repelling constrained CBO}
    \label{tab:repelling}
    \begin{subtable}{0.45\textwidth}
        \centering
        \caption{Success rates for the standard CBO}
        \begin{tabular}{|c|r|r|r|r|}
            \hline
            \multirow{2}{*}{$K$} & \multicolumn{4}{c|}{${N}$} \\
            \cline{2-5}
             & \textbf{10} & \textbf{20} & \textbf{50} & \textbf{100} \\
            \hline
            5   & 0.075 & 0.106 & 0.192 & 0.332 \\
            10  & 0.084 & 0.135 & 0.281 & 0.525 \\
            20  & 0.094 & 0.169 & 0.422 & 0.766 \\
            50  & 0.121 & 0.249 & 0.702 & 0.982 \\
            100 & 0.133 & 0.315 & 0.892 & 0.999 \\
            \hline
        \end{tabular}
    \end{subtable}
    \hspace{1cm}
    \begin{subtable}{0.45\textwidth}
        \centering
        \caption{Success rates for the repelling CBO}
        \begin{tabular}{|c|r|r|r|r|}
            \hline
            \multirow{2}{*}{$K$} & \multicolumn{4}{c|}{${N}$} \\
            \cline{2-5}
             & \textbf{10} & \textbf{20} & \textbf{50} & \textbf{100} \\
            \hline
            5   & 0.182 & 0.340 & 0.665 & 0.863 \\
            10  & 0.214 & 0.384 & 0.669 & 0.834 \\
            20  & 0.234 & 0.433 & 0.765 & 0.915 \\
            50  & 0.263 & 0.538 & 0.929 & 0.999 \\
            100 & 0.287 & 0.595 & 0.979 & 1.000 \\
            \hline
        \end{tabular}
    \end{subtable}
\end{table}

\subsection{Inverse problem}
\label{subsec:pide}
In this section, we test the CBO model from Section~\ref{subsec_2.1} on an inverse problem.
Consider the Cauchy problem for the parabolic partial integral differential equation (PIDE): 
\begin{align}
    \label{eq:merton_pide}
    \pd{u}{t} &+ \frac{1}{2}\sigma^2 x^2 \pd{^2 u}{x^2} - b x \pd{u}{x} + \frac{1}{\sqrt{2\pi \gamma ^ 2}} \int_\R \big[ u (t, xe^y) - u(t, x) \big] \exp\Big(- \frac{(y - m)^2}{2 \gamma^2} \Big) \d y = 0,
\end{align}
with the terminal condition
\begin{align}
    u(T, x) = \max\{x - 1, 0\} 
\end{align}
and $b \vcentcolon= e^{m + \frac{1}{2}\gamma^2} - 1$.

The inverse problem is formulated within Tikhonov's regularization setup as follows. Given noisy observations $\hat u_{i,j}$ of the solution $u(t_i, x_j)$ to the PIDE problem (\ref{eq:merton_pide}) at some $t_i$, $x_j$, we aim to find estimates $\hat\theta:=(\hat \sigma, \hat m, \hat \gamma)$ of the parameters $\theta = (\sigma, m, \gamma)$ with the constraints $\hat \sigma \in [0, 1]$, $\hat m \in [-1, 1]$, and $\hat \gamma \in [0, 1]$. 
To achieve this aim, we construct the loss function 
\begin{align}
 \text{Loss}(\theta) =   \sum\limits_{i = 1}^{10}\sum_{j=1}^{5} \big| u(t_i, x_j; \theta) -   \hat u_{i,j}\big|^{2}+\lambda |\theta|_2, \label{eq:loss}
\end{align}
where $\lambda>0$ is a small regularization parameter. 
The data is synthetically generated using the following representation for the forward map \cite{merton1976option}:
\begin{align*}
u(t, x; \theta) &= \sum_{j=0}^\infty \frac{(\lambda \tau)^j}{j!} e^{-\lambda \tau} \Bigg\{ 
    x e^{-\lambda b \tau + j (m + \gamma^2 / 2)} \Phi \bigg( 
        \frac{\ln (x) + \big(  \frac{\sigma^2}{2} - \lambda b \big) \tau + j (m + \gamma^2)}{\sqrt{\sigma^2 \tau + j \gamma^2}} 
    \bigg) \\
&\quad -  e^{-r \tau} \Phi \bigg(
        \frac{\ln (x) + \big(  - \frac{\sigma^2}{2} - \lambda b \big) \tau + j m}{\sqrt{\sigma^2 \tau + j \gamma^2}}
    \bigg)
\Bigg\}, \,\, \tau = T - t,
\end{align*}
followed by adding a small amount of relative (observational) noise
\begin{align}
    \hat u_{ij} = u(t_i, x_j; \theta) + \epsilon_{ij}, \quad
    \epsilon_{ij} \sim \mathcal{N}\big(0, 10^{-3} u(t_i, x_j; \theta)\big).
\end{align}
Here $\Phi (\cdot )$ is the cdf for the standard normal distribution. 
Note that when the forward map is not available in an analytical form, linear PIDE problems can be solved using the Monte Carlo technique together with approximating the corresponding stochastic characteristics by a suitable numerical scheme (see e.g. \cite{George} and references therein).

For the experiments we take $T = 3$ and use the uniformly spaced grid  $t_i = 0.3 (i - 1)$, $i = 1, \ldots, 10$, and $x_j = 0.8 + 0.1(j - 1)$, $j = 1, \ldots, 5$.
As the ground truth, we chose $ \sigma = 0.1 $, $m = -0.2$, $\gamma = 0.3$, which are the true values to be recovered.
We use $N = 400$ particles and the projection scheme (\ref{eq:scheme_projection}) with step-size $h=0.01$ together with the drift and diffusion coefficients described in Section~\ref{subsec:rastrigin}. We run the projection scheme for $K=100$ steps. Additionally, we use the tolerance $\varepsilon = 0.01$ to determine if an experiment is successful.
Using a value of $\alpha = 10^{14}$, we obtain a success rate of 0.990. We note that in this case, a value of $\alpha = 10^4$ is not sufficient and results in a success rate of 0.07. 

In Figure~\ref{fig:parameter_histograms}, we include the histograms for each parameter in the final consensus of each experiment. We see that each estimated parameter is distributed reasonably tightly around the true parameter value.
In a future work it is of interest to test the constrained CBO models from Section~\ref{subsec_2.1} and~\ref{subsec_2.2} on more complex inverse problems. 

\begin{figure}[ht]
    \centering
    
    \begin{subfigure}[b]{0.32\textwidth}
        \includegraphics[width=\textwidth]{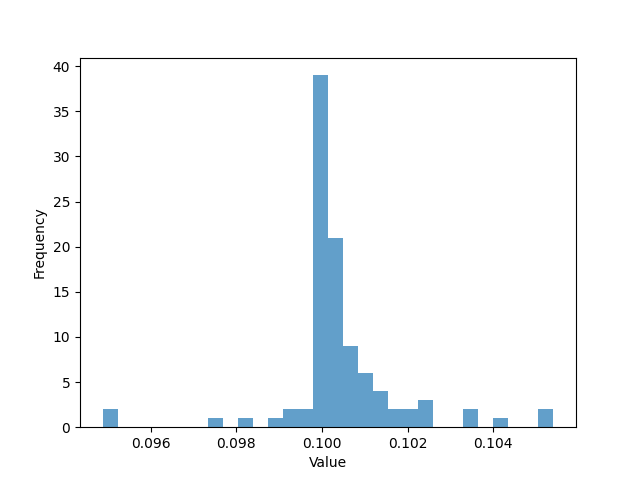}
        \caption{Histogram for $\sigma$}
    \end{subfigure}
    \hfill
    \begin{subfigure}[b]{0.32\textwidth}
        \includegraphics[width=\textwidth]{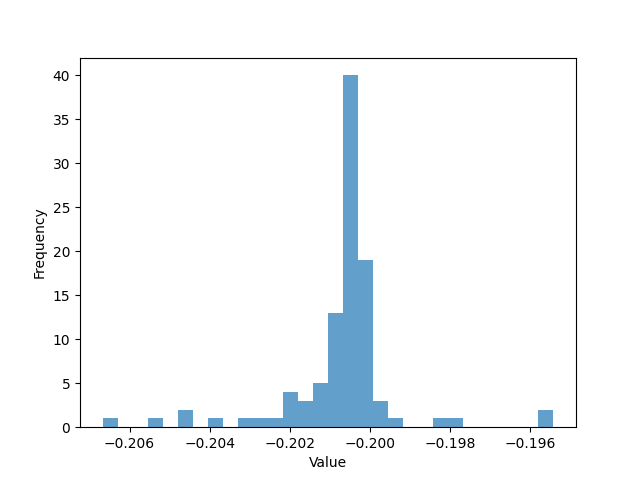}
        \caption{Histogram for $m$}
    \end{subfigure}
    \hfill
    \begin{subfigure}[b]{0.32\textwidth}
        \includegraphics[width=\textwidth]{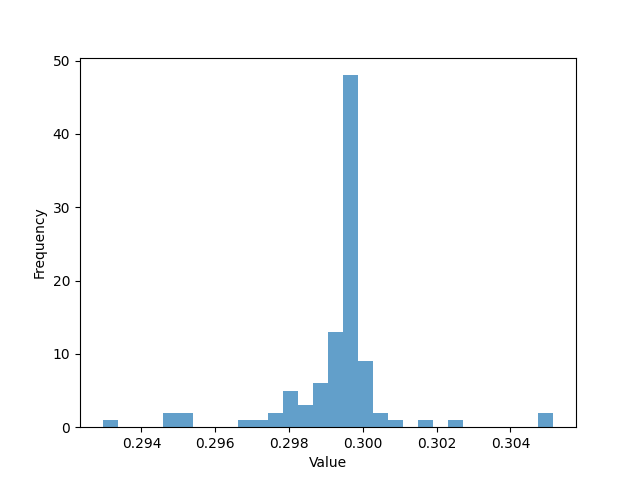}
        \caption{Histogram for $\gamma$}
    \end{subfigure}
    \caption{The inverse problem. Histograms of recovered parameters across the 1000 experiments.}
    \label{fig:parameter_histograms}
\end{figure}

\section*{Acknowledgments}
The authors are grateful to Professor Alain-Sol Sznitman and Dr Axel Ringh for useful discussions. AS and MVT were supported by the Engineering and Physical Sciences Research Council [grant number EP/X022617/1]. AS was also supported by the Wallenberg AI, Autonomous Systems and 
Software Program (WASP) funded by the Knut and Alice Wallenberg Foundation. AS and MVT thank the Isaac Newton Institute for Mathematical Sciences (Cambridge, UK), funded by EPSRC grant EP/Z000580/1, for support and hospitality during the programme ``Stochastic systems for anomalous diffusion'', where a part of work on this paper was undertaken. 

For the purpose of open access, the authors  applied a 
Creative Commons Attribution (CC-BY) license to any Author Accepted Manuscript version arising.

\bibliographystyle{plain}
\bibliography{ref}


\end{document}